\documentclass[12pt,a4paper,twoside]{amsart}
\usepackage[a4paper,left=30mm,right=30mm,top=30mm,bottom=30mm]{geometry}
\usepackage{setspace}
\usepackage{graphicx} 
\usepackage{amsmath, mathtools, amssymb, amsthm}
\usepackage[symbol]{footmisc}
\usepackage{xcolor}
\usepackage{subcaption}
\usepackage{tikz}
\usepackage{hyperref}
\hypersetup {colorlinks=true, linkcolor=black, urlcolor=blue, citecolor=blue}
\usepackage{dsfont}
\usepackage{stmaryrd}
\usepackage{float}
\usepackage[rightcaption]{sidecap}

\sidecaptionvpos{figure}{t}

\newtheorem{theorem}{Theorem}[section]

\newtheorem{lemma}[theorem]{Lemma}
\newtheorem{proposition}[theorem]{Proposition}
\newtheorem{remark}[theorem]{Remark}
\newtheorem{definition}[theorem]{Definition}

\numberwithin{equation}{section}

\newcommand{\N}{\mathbb{N}}
\newcommand{\Z}{\mathbb{Z}}
\newcommand{\R}{\mathbb{R}}
\newcommand{\C}{\mathbb{C}}

\newcommand{\rot}{\operatorname{curl}}
\newcommand{\curl}{\rot}
\renewcommand{\div}{\operatorname{div}}
\newcommand{\dd}{\, \mathrm{d}}
\newcommand{\dx}{\, \mathrm{d}x}
\newcommand{\one}[1]{{\bf{1}}_{#1}}
\newcommand{\eps}{\varepsilon}

\newcommand{\del}{\partial}
\newcommand{\Oe}{\Omega_\eta}
\newcommand{\Ee}{E^\eta}
\newcommand{\He}{H^\eta}
\newcommand{\weakto}{\rightharpoonup}
\newcommand{\id}{\mathrm{id}}

\newcommand{\loc}{\mathrm{loc}}

\newcommand{\fhi}{\varphi}
\newcommand{\Hd}{\overset{\tikz\draw[fill=black] (0,0) circle (.2ex);}{H}_\#}
\newcommand{\urs}{u^{\psi}_r} 
\newcommand{\vrs}{v^{\psi}_r} 
\renewcommand{\hom}{\text{hom}}

\setcounter{tocdepth}{3} 
\begin{document}
\bibliographystyle{abbrv}

\pagestyle{myheadings} \markboth{Polarization
 filter}{B. Schweizer and D.~Wiedemann}

\thispagestyle{empty}
\begin{center}
  ~\vskip4mm {\Large\bf
    Interface conditions for Maxwell's equations \\[3mm]
    by homogenization of thin inclusions:\\[3mm]
    transmission, reflection or polarization
  }\\[7mm]
  {\large B.~Schweizer\footnotemark[1] and D.~Wiedemann\footnotemark[1]}\\[3mm]
 
 January 29, 2025 \\[5mm]
\end{center}

\footnotetext[1]{Technische Universität Dortmund, Fakult\"at f\"ur
  Mathematik, Vogelspothsweg 87, D-44227
  Dortmund. Ben.Schweizer$@$tu-dortmund.de,
  David.Wiedemann$@$tu-dortmund.de}

\begin{center}
 \vskip3mm
 \begin{minipage}[c]{0.87\textwidth}
   {\bf Abstract:} We consider the time-harmonic Maxwell equations in
   a complex geometry. We are interested in geometries that model
   polarization filters or Faraday cages. We study the situation that
   the underlying domain contains perfectly conducting inclusions, the
   inclusions are distributed in a periodic fashion along a
   surface. The periodicity is $\eta>0$ and the typical scale of the
   inclusion is $\eta$, but we allow also the presence of even smaller
   scales, e.g.~when thin wires are analyzed. We are interested in the
   limit $\eta\to 0$ and in effective equations. Depending on
   geometric properties of the inclusions, the effective system can
   imply perfect transmission, perfect reflection or polarization.
 
 \vskip4mm
 {\bf MSC:} 78M40, 78M35, 35Q61
 \end{minipage}\\[7mm]
\end{center}

\section{Introduction}
The propagation of electromagnetic waves can be influenced by thin
metallic structures. Probably best known is the fact that
(appropriately designed) meshes of metallic wires are impenetrable for
the waves of a micro-wave oven and they can protect the user. Vice
versa, metallic surfaces can be made penetrable for electromagnetic
waves by means of thin slits. Another effect can be the polarization
of the wave. We are interested in a mathematical analysis of such
effects.

We study the time-harmonic Maxwell equations in a domain that contains
a thin layer of a meta-material: A perfectly conductive material is
distributed in a periodic fashion along a surface. The material can be
distributed in a multi-scale fashion, e.g.~a wire structure with a
periodicity $\eta>0$ and a thickness of the wires of a size much
smaller than $\eta$. Another example would be a plate structure with
small slits in thin plates. We are interested in the limit $\eta\to 0$
and in an effective description of this situation. Our results show
that the geometrical and topological properties of the inclusions are
crucial for the qualitative effect of the meta-material. The effective
system can show reflection or polarization. For structures with slits
or structures that do not have certain connectivity properties, the
meta-material can also be invisible in the sense that it does not
appear in the effective equations.

\smallskip Maxwell's equations describe electromagnetic waves with an
electric field $E$ and a magnetic field $H$. The equations use two
coefficients, the permeability $\mu$ and the permittivity $\eps$.
Since we are interested in complex geometries, the domain of interest
$\Oe \subset \Omega\subset\R^3$ depends on a (small) parameter
$\eta>0$.  In the time-harmonic setting with a fixed frequency
$\omega>0$, the system reads
\begin{subequations}\label{eq:Strong:Maxwell:eta}
  \begin{align}\label{eq:Strong:Maxwell:eta:1}
    \rot\Ee &= i \omega \mu \He + f_h &&\textrm{ in } \Omega_\eta \, , 
    \\\label{eq:Strong:Maxwell:eta:2}
    \rot\He &= - i \omega \varepsilon \Ee +f_e &&\textrm{ in } \Omega_\eta \, ,
    \\\label{eq:Strong:Maxwell:eta:3}
    \Ee \times \nu &= 0 &&\textrm{ on } \partial \Omega_\eta \, .
  \end{align}
\end{subequations}
The last equation is a boundary condition that models a perfectly
conducting material outside $\Omega_\eta$, it uses the exterior normal
$\nu$ of $\Omega_\eta$. The index $\eta$ indicates that we will
analyze a fine-scale structure. The solution is a map
$(\Ee,\He) \colon \Omega_\eta\ni x\mapsto (\Ee,\He)(x) \in \C^3\times
\C^3$. Since the solution depends on the domain $\Omega_\eta$, it
depends on $\eta$. The right hand side $f_e = f_e(x)$ of the second
equation models prescribed external currents; we include
$f_h = f_h(x)$ to treat a more general and more symmetric system.

\smallskip 

We consider the macroscopic domain $\Omega$, which is separated by
$\Gamma$ into an upper part $\Omega_+$ and a lower part $\Omega_-$ as
follows:
\begin{align}\label{eq:def:Omega-Gamma}
  \Omega &\coloneqq (0,1)^2 \times (-1,1) \,, & \Gamma
  &\coloneqq (0,1)^2 \times \{0\}\,,\\\label{eq:def:Omega-pm}
  \Omega_+ &\coloneqq (0,1)^2 \times (0,1) \,,&\Omega_-
  &\coloneqq (0,1)^2 \times (-1,0) \,.
\end{align}
We assume that there exists a sequence of sets
$\Sigma_\eta \subset \Omega$ which represents the perfectly conducting
inclusion. The sets $\Sigma_\eta$ concentrate at the interface
$\Gamma = \{x_3 = 0\}$ in the sense that
$\Sigma_\eta \subset (0,1)^2 \times (0, \eta)$. The domain $\Oe$ is
given by $\Oe \coloneqq \Omega \setminus \Sigma_\eta$.  We are
interested in the effective interface condition along $\Gamma$.

\subsection{Asymptotic connectedness and disconnectedness of
  conductors}

In the simplest case, the micro-structure consists of a periodic array
of parallel wires, let us assume that they are oriented in direction
$e_1$. We will introduce a definition for ``asymptotic connectedness''
and will see that such parallel wires are asymptotically connected in
direction $e_1$ and asymptotically disconnected in direction $e_2$. In
the limit $\eta\to 0$, this micro-structure reflects electromagnetic
waves that have an electric field parallel to $e_1$, when the electric
field is orthogonal to $e_1$, the structure is invisible for the
electromagnetic wave.  Since a general wave can be understood as a
superposition of the above described waves, the microstructure leads
to a polarization of the general electromagnetic wave, compare
Figure~\ref{fig:polarization-wire-structure}.

\vspace{0.4cm}

\begin{SCfigure}[][ht]
  \begin{minipage}{0.7\linewidth}
    \vspace{-1.2cm}
    \includegraphics[width=\linewidth]{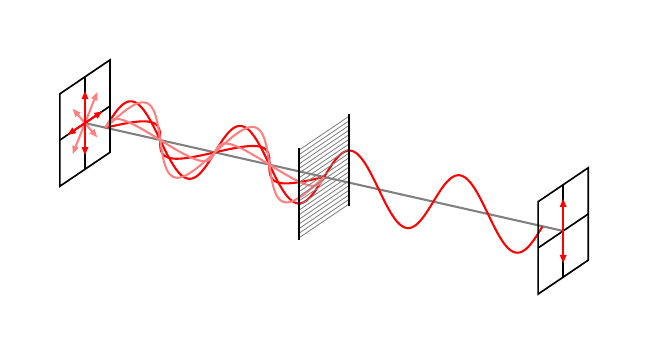}
  \end{minipage}
  \captionsetup{width=0.97\linewidth}
  \caption{Polarization: Electromagnetic waves (represented by the
    electric field) interacting with a mesh of wires. Components that
    are orthogonal to the wires can pass the
    microstructure.\label{fig:polarization-wire-structure}}
\end{SCfigure}

The limit of \eqref{eq:Strong:Maxwell:eta} depends on an asymptotic
analysis of the connectedness properties. We define a notion of
\textit{asymptotic (dis-)connectedness} in certain directions for the
obstacle set by means of the existence of cell functions, see
Definitions \ref {def:Analytical-DisConnectedness} and \ref
{def:Analytical-Connectedness}.  With these notions, the
homogenization of \eqref{eq:Strong:Maxwell:eta} leads to one of the
following three homogenization results: perfect transmission (inactive
interface), perfect reflection or polarization.

Having used cell functions in order to define asymptotic
(dis-)connectedness, it is an important task to study the
connectedness properties for concrete geometrical objects.  Our
investigation is focused on thin parallel wires that may also have
thin slits (gaps in the wire). The existence of the corresponding cell
functions and, thus, the asymptotic connectedness or disconnectedness
depends on the asymptotic behavior of the parameters describing the
structures; we will see that the asymptotics of the number
$\eta |\ln(r_\eta)|$ are decisive, where $r_\eta$ is the radius of the
wire, see Proposition \ref {prop:construction:Phi-wires-thin}.

To enlarge the class of domains under consideration, we introduce a
monotonicity and a deformation argument. Loosely speaking: Enlarging a
connected structure leaves the structure connected, making a
disconnected structure smaller, leaves a disconnected structure
disconnected (large and small is meant here in the sense of set
inclusions, see Proposition \ref {prop:GeometricMonotonicity}).  The
deformation result provides that Lipschitz deformations of a
(dis-)connected structure is again (dis-)connected, see
Proposition~\ref {prop:construction:Psi-wires-perfect:curved}.

\subsection{Interface conditions of the limit system}
To formulate the limit system, we use the following notation: Whenever
a scalar function $u \colon \Omega \to \C$ possesses traces on the
interface $\Gamma$, we write $u|_{\Gamma,+}$ for the trace of
$u|_{\Omega_+}$, i.e.~the boundary values taken from above the
interface. Correspondingly, we write $u|_{\Gamma,-}$ for the trace of
$u|_{\Omega_-}$, which are the boundary values from below. For the
jump of such a function across $\Gamma$ we write
$\llbracket u \rrbracket_\Gamma \coloneqq u|_{\Gamma,+} -
u|_{\Gamma,-}$.  We will actually use the brackets only to formulate
that a function $u$ has no jump across $\Gamma$ by writing
$\llbracket u \rrbracket_\Gamma = 0$.

In the case $\llbracket u \rrbracket_\Gamma = 0$, the function $u$ has
no jump across $\Gamma$ and we can write
$u|_{\Gamma} \coloneqq u|_{\Gamma,+} = u|_{\Gamma,-}$. In fact, we
will use traces only in this case.  For vector valued functions
$E \colon \Omega \to \C^3$ of the class
$H(\curl, \Omega\setminus \Gamma)$, there is no continuous trace
operator for $E$, but there are continuous trace operators for the
tangential components.

\smallskip {\em Limit system.}  In our setting, the homogenization of
\eqref{eq:Strong:Maxwell:eta} results in the following limit
system for $E^\hom$, $H^\hom$:
\begin{subequations}
  \label{eq:Maxwell-strong-E-H:Limit}
  \begin{align}\label{eq:Maxwell-strong-E-H:Limit:1}
    \rot E^\hom &= i \omega \mu H^\hom + f_h
    &&\textrm{ in } \Omega \, , 
    \\\label{eq:Maxwell-strong-E-H:Limit:2}
    \rot H^\hom &= - i \omega \varepsilon E^\hom +f_e
    &&\textrm{ in } \Omega \setminus \Gamma\,,
    \\
    E^\hom \times \nu &= 0
    && \textrm{ on } \partial \Omega\,.
       \label{eq:Maxwell-strong-E-H:Limit:3}
  \end{align}
\end{subequations}
The system \eqref{eq:Maxwell-strong-E-H:Limit} must be complemented by
interface conditions at $\Gamma$ (this is a consequence of the fact
that \eqref{eq:Maxwell-strong-E-H:Limit:2} is not imposed on all of
$\Omega$). Depending on the asymptotic connectedness property of
$\Sigma_\eta$, one of the interface conditions \eqref
{eq:Maxwell-strong-E-H:Reflecting} or \eqref
{eq:Maxwell-strong-E-H:Inactive} or \eqref
{eq:Maxwell-strong-E-H:Polarizing} completes the system.

We note that \eqref{eq:Maxwell-strong-E-H:Limit:1} implies that the
tangential components of $E^\hom$ have no jump across $\Gamma$:
\begin{equation*}
  \llbracket E^\hom_1 \rrbracket _\Gamma=0 \,,
  \qquad\quad
  \llbracket E^\hom_2 \rrbracket _\Gamma = 0 \,,
\end{equation*}
hence these conditions are always satisfied.  By contrast, the
tangential components of $H^\hom$ can have jumps across $\Gamma$ since
\eqref{eq:Maxwell-strong-E-H:Limit:2} imposes an equation only on
$\Omega\setminus \Gamma$.

\smallskip
\textbf{Reflecting interface:} If the obstacles are asymptotically
connecting in both directions $e_1$ and $e_2$, we obtain the interface
conditions
\begin{align}\label{eq:Maxwell-strong-E-H:Reflecting}
  E^\hom_1|_\Gamma =0 \,,\qquad\quad E^\hom_2|_\Gamma= 0 \,.
\end{align}
{\em Remark:} The interface condition \eqref
{eq:Maxwell-strong-E-H:Reflecting} means that the domains $\Omega_+$
and $\Omega_-$ are separated. The solution $(E^\hom, H^\hom)$ can
equivalently be found by solving the equations in $\Omega_+$ and
$\Omega_-$ with boundary conditions $E^\hom \times\nu = 0$ on
$\partial \Omega_-$ and $\partial \Omega_+$.

\smallskip
\textbf{Inactive interface:} If the obstacles are asymptotically
disconnected in both directions $e_1$ and $e_2$, we obtain the
interface conditions
\begin{align}
  \label{eq:Maxwell-strong-E-H:Inactive}
  \llbracket H^\hom_1 \rrbracket_\Gamma =0 \,,
  \qquad\quad	\llbracket H^\hom_2 \rrbracket_\Gamma = 0\,.
\end{align}
{\em Remark:} 
The relations \eqref{eq:Maxwell-strong-E-H:Inactive} are satisfied if
and only if \eqref{eq:Maxwell-strong-E-H:Limit:2} holds in the entire
domain $\Omega$. We may therefore also say in this case: The
micro-structured interface has no effect in the limit system.

\smallskip \textbf{Polarizing interface:} Let $i \in \{1,2\}$ and
$j \coloneqq 3-i$ be fixed.  If the obstacles are asymptotically
connecting for the direction $e_i$ and asymptotically disconnected for
the direction $e_j$, we obtain the interface conditions
\begin{align}\label{eq:Maxwell-strong-E-H:Polarizing}
  E^\hom_i|_\Gamma = 0 \,,
  \qquad\quad \llbracket H^\hom_i \rrbracket_\Gamma = 0 \,.
\end{align}

\subsection{Outlook regarding highly conducting obstacles}
System \eqref{eq:Strong:Maxwell:eta} models perfect conductors in the
inclusion $\Sigma_\eta$. Another interesting model is constructed by
introducing a high conductivity in the inclusion.  For a sequence
$\gamma_\eta \to \infty$ and $\eps_0 \in \R$, one may consider
$\eps_\eta = \eps_0 + i \gamma_\eta\, \one{\Sigma_\eta}$ as
permittivity in Maxwell's equations. The homogenization limit can be
considered also for this model. In this slightly more complex setting,
one has to take into account the asymptotics of $\gamma_\eta$ and its
relation to the scaling and the geometry of $\Sigma_\eta$.  We refrain
from an analysis of this interesting setting in the present
contribution. Highly conducting media were investigated in related
context in \cite{MR2576911, MR2262964, Bouchitte-Schweizer-Max-2010,
  Lamacz-Schweizer-Neg}.

\subsection{Literature} In the original form, Maxwell's equations form
a time-dependent system, but one is very often interested in the
analysis for a fixed frequency $\omega>0$.  The ansatz
$u(x,t) = u(x) e^{-i\omega t}$ leads from Maxwell's equations to the
time-harmonic Maxwell system \eqref{eq:Strong:Maxwell:eta}.  We note
that the same procedure leads from the scalar wave equation to the
Helmholtz equation.  Both systems, Maxwell and Helmholtz, describe
wave phenomena and their analysis has many similarities. Accordingly,
we describe here also some work on the Helmholtz equation. We focus on
works that are related to homogenization and suppress many interesting
fields such as radiation conditions, periodic media or domain
truncation methods.  Regarding general mathematical concepts for
Maxwell's equations, we refer to \cite{Kirsch-Buch-2015} and
\cite{Monk2003a}.

\smallskip {\em Homogenization in the bulk.} The theory of
homogenization started with the analysis of periodic systems -- a
periodicity in all directions. It can be either the domain that
contains, e.g.~small and periodically spaced inclusions, or it can be
a coefficient in the equation that is periodic with a small
periodicity, say $\eta>0$. The homogenization of such system can be
performed, e.g.~with the method of two-scale convergence
\cite{MR1185639}. A homogenization of the Maxwell system in this
classical spirit is performed in \cite{MR2029130}, error estimates
have been obtained in \cite {MR3995044}, we mention \cite {MR3810505,
  MR3578028} for numerical aspects. The analysis becomes more
intricate when the system is near resonance, we refer to \cite
{Schw-resonance-2017} for an overview. Wires with large absolute
values of the permittivity $\eps$ can lead to resonances, this was
investigated in \cite {MR2262964} for an essentially two-dimensional
setting, in \cite {MR2576911} for a setting towards three space
dimensions, in \cite {MR3348421} for a random permittivity in the
wires.

Other resonances are possible for three-dimensional inclusions in the
bulk, namely the famous split-ring resonators. They were first
analyzed mathematically in \cite {Bouchitte-Schweizer-Max-2010}, the
homogenization was performed, the homogenized system can show an
effective negative permeability $\mu$.  The topology is important in
this resonance: For positive $\eta$, the rings are simply connected,
but they are closing in the limit, which changes their topology. In
particular, one has to analyze a microstructure that has some
geometrical features below the $\eta$-scale.  The results were
transferred to perfectly conducting inclusions in \cite
{Lipton-Schweizer-2018}. The generation of a negative index material
occurs when macroscopically connecting wires are additionally
included, see \cite{Lamacz-Schweizer-Neg}. A more general
investigation of topological implications was performed in \cite
{Schweizer-Urban-2018} for perfect conductors and in \cite
{Ohlberger-Schweizer-Urban-Verfuehrt-2020} for high-contrast media.
One result is that if the inclusion is connected in one direction, the
limiting $E$ field must vanish in that component.

A composite medium for Maxwell's equations in which one component has
a negative index was homogenized in \cite {MR4401386}. A high-contrast
homogenization for a large electric permittivity in a periodic
distribution was performed in \cite {MR3343064}.  A non-trivial
asymptotic geometry was analyzed in two dimensions for the Helmholtz
equation in \cite {MR3667564, Schweizer-low-freq}.

We note that a special scaling of a micro-structure was used in \cite
{Bouchitte-Schweizer-Plasmonic-2013}, again for the Helmholtz
equation: The interface has thickness of order $1$, the
micro-structure is $\eta$-periodic in the tangential direction, the
aim is to understand the coupling between upper and lower surface of
this interface; the result is the possible effect of perfect
transmission due to a resonance effect.  Sound absorption in this
scaling was investigated in \cite {Donato-Lamacz-Schweizer-2022}.

\smallskip {\em Homogenization along a surface.}  The above list of
references indicates that very interesting effects, most of them
related to resonance, can occur when small structures are distributed
in the bulk (and, often, three scales are used in such
constructions). The geometry of the present article is such that,
along a manifold (here: a two-dimensional surface in three dimensions,
compare \eqref {eq:macro-domain}), a micro-structure is
distributed. The fundamental task is to determine the effective
description of this microstructure.

For the Helmholtz equation with fixed frequency, the result at leading
order is the same as for the Poisson problem. When inclusions of scale
$\eta$ are distributed with typical distances $\eta$, the effective
system is not very interesting: When a Dirichlet condition is imposed
on the boundaries of the inclusions, the effective system contains a
Dirichlet condition at the limiting interface.  When a Neumann
condition is imposed along the inclusion, the limiting interface does
not enter the effective equation. We note that smaller inclusions can
have a different effective behavior, see \cite {MR1493040}.

The results become interesting when one asks for the effects in the
next order of the expansion.  In the fundamental contribution \cite
{DelourmeHaddarJoly-2012}, this was answered: The derivatives of the
leading order solutions must be evaluated at the limiting interface
and they are responsible for a correction term of first order in the
small parameter. We mention that the results of \cite
{DelourmeHaddarJoly-2012} were slightly generalized and simplified in
\cite {Schweizer-Neumann-2020}.

The present contribution can be seen in this context. We do not ask
here for first order correction terms. On the other hand, we treat the
three-dimensional situation and Maxwell's equations; we can therefore
find interesting results even at leading order. Most notable, for
wires: Cancellation effect in one component, invisibility of the
effective surface in the other component. This can be seen like the
effect of a Dirichlet condition in one component, and the effect of a
Neumann condition in the other component.

\subsection{Content layout}
The manuscript is organized as follows. In
Section~\ref{sec:main-results}, we present the geometries under
consideration, formulate the weak form for the $\eta$-problem and the
weak forms for the homogenized systems.  We formulate the main
homogenization result in Theorem \ref {thm:main:homogenization}.  In
Section \ref{sec:Test-functions}, we introduce the notion of
asymptotic (dis-)connectedness by means of the existence of cell
functions. Using these cell functions, we homogenize
\eqref{eq:Strong:Maxwell:eta} for generic geometries that are
asymptotically (dis-)connected in Section \ref{sec:Hom-Maxwell}.
Section \ref{sec:connected-general} illustrates that one can conclude
connectedness properties of one inclusion from the connectedness
properties of another inclusion: Deformations of the inclusion do not
change the connectedness properties. Furthermore: When an inclusion is
connected, every larger inclusion (in the sense of sets) is also
connected.  In Section \ref {sec:wires}, we investigate cylindrical
geometries and show that wires are connecting in one direction and are
disconnected in the other direction. The fact that the correct
critical asymptotics for the radius are indeed found is shown in
Section \ref {sec:criticality}.

\section{Main results}\label{sec:main-results}
In this section, we describe the microscopic and the macroscopic
geometry. We present the weak form of the $\eta$-problem and of the
limit problem with different boundary conditions. Theorem \ref
{thm:main:homogenization} states the main result.

\subsection{Geometry and notations}\label{sec:Geometry}
For a measurable set $U \subset \R^n$, we denote the indicator
function by $\one{U}$, i.e.~${\one{U}(x) = 1}$ for $x \in U$ and
$\one{U}(x) = 0$ for $x \not \in U$. In estimates, we use generic
constants $C$ that may change from one inequality to the next.

We recall the macroscopic geometry of
\eqref{eq:def:Omega-Gamma}--\eqref{eq:def:Omega-pm}: The domain is
$\Omega = (0,1)^2 \times (-1,1)$, the interface
$\Gamma = (0,1)^2 \times \{0\}$ separates
$\Omega_+ = (0,1)^2 \times (0,1)$ and
$\Omega_- = (0,1)^2 \times (-1,0)$.

The small parameter is $\eta >0$. We denote the three dimensional unit
cell by $Y \coloneqq (0,1)^3$. The microscopic geometry is prescribed
by a sequence of compact obstacle shapes
$\Sigma_Y^\eta \subset [0,1]^2\times (0,1) \subset \overline{Y}$. To
extend the structure periodically, we use indices
$k= (k_1, k_2) \in \Z^2$ and set
\begin{equation*}
  \Sigma^\eta_\# \coloneqq \bigcup
  \limits_{k \in \Z^2} (k_1 ,k_2, 0) + \Sigma_Y^\eta \subset \R^3\,.
\end{equation*}
We assume that $\Sigma^\eta_\#$ has a Lipschitz boundary. A scaling of
$\Sigma^\eta_\#$ with $\eta$ provides the inclusion, $\Omega_\eta$ is
the perforated macroscopic domain:
\begin{equation}
 \label{eq:macro-domain}
 \Sigma_\eta \coloneqq
 \eta\, \Sigma^\eta_\# \cap \Omega
 = \bigcup\limits_{k \in \Z^2}
 \eta[(k_1,k_2,0) + \Sigma_Y^\eta] \cap \Omega\,,
 \qquad 
 \Omega_\eta \coloneqq \Omega \setminus \Sigma_\eta \,.
\end{equation}
For an illustration of a possible cell-geometry see Figure
\ref{fig:Y-Sigma}, the resulting domain $\Omega_\eta$ is shown in
Figure~\ref{fig:Omega_Eta}.

\captionsetup[subfigure]{aboveskip=-8pt, belowskip=8pt}
\begin{figure}[ht]
  \centering
  \begin{subfigure}[t]{.48\textwidth}
    \centering
    \raisebox{0.4cm}{\includegraphics[width=\linewidth]{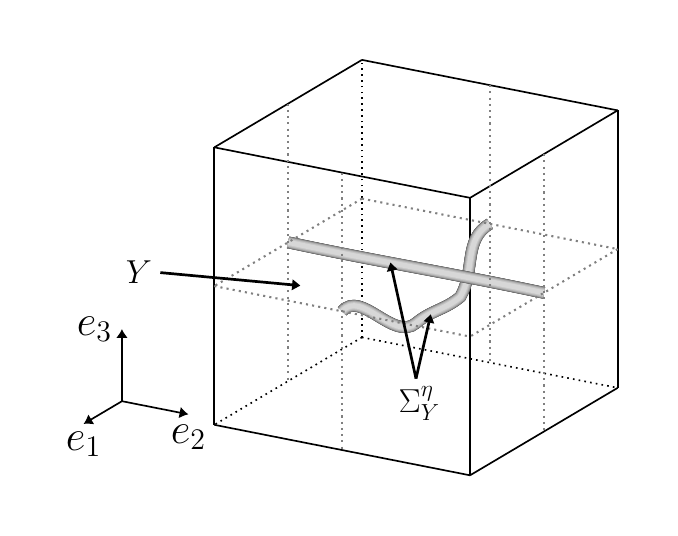}
    } \subcaption{The set $\Sigma_Y^\eta$ representing the conductor
      in the reference cell}
    \label{fig:Y-Sigma}
  \end{subfigure}
  \hspace{\fill}
  \begin{subfigure}[t]{.48\textwidth}
    \centering
    \includegraphics[width=\linewidth]{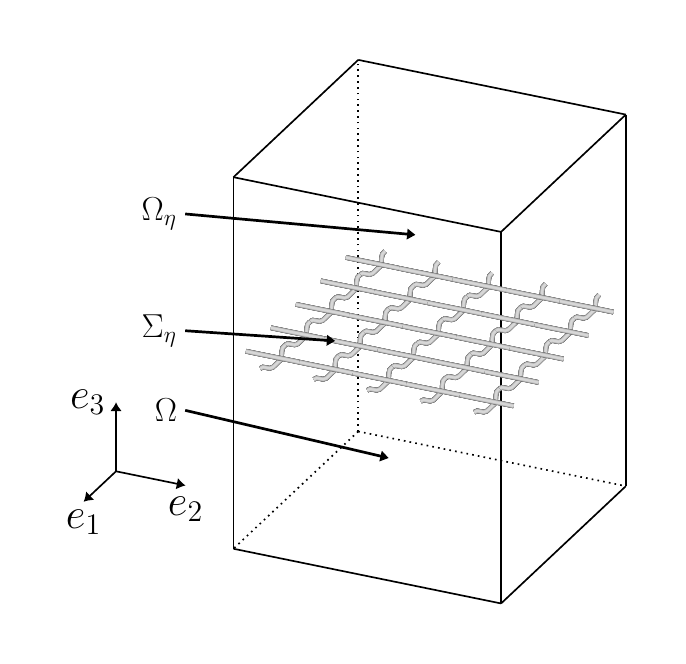}
    \subcaption{The domain $\Omega_\eta$ with obstacle set
      $\Sigma_\eta$ } \label{fig:Omega_Eta}
  \end{subfigure}
  \captionsetup{aboveskip=0pt}
  \caption{Geometry of the conductor}
  \label{fig:geometries}
\end{figure}

\subsection{Weak form of the \texorpdfstring{$\eta$}{eta}-problem}
For the analysis of Maxwell's equations we use the following function
spaces: For a domain $U\subset \R^3$, understanding the curl of an
$L^2(U)$-function in the sense of distributions, we set
\begin{align*}
  H(\rot, U)
  &\coloneqq \{ E \in L^2(U,\C^3) \mid \rot E \in L^2(U,\C^3)\} \,,\\
  H_0(\curl,U)
  &\coloneqq \left\{E\in H(\curl,U)\middle|\int_U 
    E\cdot\curl\psi
    = \int_U \curl E\cdot \psi\ \forall
    \psi\in H(\curl,U)\right\} \,,\\
  H_\loc(\rot, U)
  &\coloneqq \{ E \in L^2_\loc(U,\C^3) \mid \rot E \in L^2_\loc(U,\C^3)\}\,.
\end{align*}
For the first two spaces, we use the norm and the scalar product that
is induced by
\begin{align*}
  \|E \|^2_{H(\rot, U)} \coloneqq
  \int\limits_U \left\{|E|^2 + |\rot E|^2\right\} \, .
\end{align*}
We use the following weak formulation of the $\eta$-problem
\eqref{eq:Strong:Maxwell:eta}: Find
$(\Ee, \He) \in L^2(\Oe, \C^3) \times L^2(\Oe, \C^3)$ such that
\begin{subequations}\label{eq:weak:Eta:E-H}
  \begin{align}\label{eq:weak:Eta:E}
    \int\limits_{\Oe} \Ee \cdot \curl \psi
    &= \int\limits_{\Oe} (i \omega \mu \He + f_h) \cdot \psi
    && \text{ for every } \psi \in H(\curl, \Oe)\,,
    \\\label{eq:weak:Eta:H}
    \int\limits_{\Oe} \He \cdot \curl \phi
    &= \int\limits_{\Oe} (-i \omega \eps \Ee + f_e) \cdot \phi
    && \text{ for every } \phi \in H_0(\curl, \Oe)\,.
  \end{align}
\end{subequations}
We note that \eqref{eq:weak:Eta:E} does not only imply that
\eqref{eq:Strong:Maxwell:eta:1} is satisfied in the sense of
distributions (and, hence, also the regularity
$\curl \Ee \in L^2(\Oe)$ is satisfied), but it implies also the
boundary condition, $\Ee\in H_0(\curl, \Oe)$ and, hence, \eqref
{eq:Strong:Maxwell:eta:3} in a weak form.

We note the following facts on trivially extended solutions (one
considers $\Ee$ and $\He$ as functions on all of $\Omega$ by setting
them equal to $0$ on $\Sigma_\eta$). The extended functions are
solutions of \eqref{eq:weak:Eta:E-H} with integrals over all of
$\Omega$ under the assumption that $f_h$ vanishes on
$\Sigma_\eta$ and that the test functions are
$\phi, \psi\in H(\curl, \Omega)$ with $\phi$ vanishing on
$\Sigma_\eta$.

\subsection{Weak form of the limit systems} In this section, we state
the weak forms of the limit system \eqref{eq:Maxwell-strong-E-H:Limit}
with one of the interface conditions
\eqref{eq:Maxwell-strong-E-H:Reflecting},
\eqref{eq:Maxwell-strong-E-H:Inactive} or
\eqref{eq:Maxwell-strong-E-H:Polarizing}. The three corresponding
equations have actually all the same form and the same solution space,
they only differ in the spaces of test-functions. We use the
macroscopic geometry given by $\Omega$ and the interface $\Gamma$ of
\eqref{eq:def:Omega-Gamma}.

\begin{subequations}\label{eq:def:Spaces:X-Y}
  We introduce spaces of test-functions.  We start with the largest
  spaces, $X_-$ and $Y_-$, the subscript indicates that there are no
  restrictions on the functions on the interface:
  \begin{align}
    \label{def:X-}
    X_- &\coloneqq C_0^1(\overline{\Omega}, \C^3)\,,\\
    \label{def:Y-}
    Y_- &\coloneqq \left\{ \psi \colon \Omega\to \C^3\mid
          \psi|_{\Omega_\pm} \in C^1(\overline{\Omega_\pm}, \C^3)
          \right\}\,.  
  \end{align}
  To recall some notation: $X_-$ consists of differentiable functions
  $\phi \colon \Omega\to \C^3$ such that $D\phi$ is continuous and bounded
  and such that $\phi=0$ holds on $\del\Omega$. The space $Y_-$
  requires the same properties on $\Omega_+$ and on $\Omega_-$, but
  the functions can have arbitrary values on $\Gamma$ and all
  components can also have a jump across $\Gamma$.

  For an index $i \in \{1,2\}$ that indicates a tangential direction,
  we define subspaces in which some restriction is introduced for the
  $i$-th component of the field:
  \begin{align}
    \label{def:Xi}
    X_i &\coloneqq \{\phi \in X_- \mid \phi_i|_\Gamma = 0\}\,, \\
    \label{def:Yi}
    Y_i &\coloneqq \{\psi \in Y_- \mid
          \llbracket  \psi_i\rrbracket_\Gamma = 0\}\,,
  \end{align}
  where $\llbracket \psi_i\rrbracket_\Gamma$ denotes the jump of
  $\psi_i$. The jump is classically defined for test-functions.

  Finally, we introduce the spaces $X_{12}$ and $Y_{12}$ in which
  we impose a condition for both tangential components:
  \begin{align}
    \label{def:X12}
    X_{12} &\coloneqq X_1 \cap X_2\,,\\
    \label{def:Y12}
    Y_{12} &\coloneqq Y_1 \cap Y_2\,.
  \end{align}
  We mention that
  $X_{12} = \left\{\phi \in C_0^1(\overline{\Omega}, \C^3)\, \mid\, \phi_1 =
    \phi_2 = 0\text{ on } \Gamma\right\}$, and that for functions
  $\psi\in Y_{12}$ the jumps vanish,
  $\llbracket \psi_1\rrbracket_\Gamma = \llbracket
  \psi_2\rrbracket_\Gamma = 0$.  The spaces are ordered:
  $X_{12} \subset X_i \subset X_-$ and
  $Y_{12} \subset Y_i \subset Y_-$.
\end{subequations}

We can now formulate the limit system in a weak sense.  We say that
$E^\hom, H^\hom \in L^2(\Omega, \C^3)$ is a weak solution to
\eqref{eq:Maxwell-strong-E-H:Limit} with interface conditions when
\begin{subequations}\label{eq:weak:Limit-NonComplete:E-H}
  \begin{align}
    \label{eq:weak:Limit-NonComplete:E}
    \int\limits_{\Omega
    \setminus \Gamma} E^\hom\cdot \curl \psi
    &= \int\limits_{\Omega
      \setminus \Gamma} (i \omega \mu H^\hom + f_h) \cdot \psi
    && \text{ for every } \psi \in Y\,, \\
    \label{eq:weak:Limit-NonComplete:H}
    \int\limits_{\Omega} H^\hom \cdot \curl \phi
    &= \int\limits_{\Omega}
      (-i \omega \eps E^\hom + f_e) \cdot \phi
    && \text{ for every } \phi \in  X\,.
  \end{align}
\end{subequations}
The interface conditions \eqref{eq:Maxwell-strong-E-H:Reflecting} are
encoded by the choice $X= X_{12}$ and $Y = Y_-$.  The interface
conditions \eqref{eq:Maxwell-strong-E-H:Inactive} are encoded by the
choice $X= X_-$ and $Y = Y_{12}$.  The interface conditions
\eqref{eq:Maxwell-strong-E-H:Polarizing} are encoded by the choice
$X = X_i$ and $Y = Y_i$. The different cases are collected in
Table~\ref{tab:WeakFormSpaces}. Further comments on the weak solution
concept are given in Remarks \ref {rem:weak-implies-strong} and \ref
{rem:additional-solution-prop} below.

\begin{table}[ht] \centering
  \begin{tabular}{|l|c|c|c|c|c|c|c}
    \hline
    \rule{0pt}{2.5ex} Interface
    & \multicolumn{2}{c|}{strong form}
    &
      \multicolumn{2}{c|}{test-functions} \\[1.5mm]
    \hline
    \rule{0pt}{3ex} Reflecting
    &
      \eqref{eq:Maxwell-strong-E-H:Reflecting}
    &
      $E^\hom_1|_\Gamma = E^\hom_2|_\Gamma = 0$ & $X= X_{12}$ & $Y= Y_-$
    \\[1.5mm] \hline \rule{0pt}{3ex} Inactive &
                                                \eqref{eq:Maxwell-strong-E-H:Inactive}
    &$\llbracket H^\hom_1\rrbracket_\Gamma = \llbracket
      H^\hom_2\rrbracket_\Gamma = 0$ & $X = X_-$
    & $Y=Y_{12}$ \\[1.5mm]
    \hline \rule{0pt}{3ex} Polarizing &
                                        \eqref{eq:Maxwell-strong-E-H:Polarizing}
    &
      $E^\hom_i|_\Gamma = \llbracket  H^\hom_i\rrbracket_\Gamma  = 0$
    & $X = X_i$ & $Y = Y_i$\\[1.5mm]
    \hline
   \end{tabular}
                            
   \caption{Spaces of test functions in
     \eqref{eq:weak:Limit-NonComplete:E-H}. Regarding the third case:
     The index $i\in \{1,2\}$ indicates the direction of
     connectedness.  }
    \label{tab:WeakFormSpaces}
\end{table}

\subsection{Homogenization result} The following theorem collects the
homogenization results. In the homogenized Maxwell system, the
inclusions are replaced by interface conditions along $\Gamma$. These
conditions depend on the asymptotic connectedness and disconnectedness
of the inclusions as specified in Definitions
\ref{def:Analytical-DisConnectedness} and
\ref{def:Analytical-Connectedness}.

\begin{theorem}[Main result]\label{thm:main:homogenization} Let the
  setting be that of Section~\ref{sec:Geometry}: The macroscopic sets
  are $\Omega$ and $\Gamma$, the sets for the $\eta$-problem are given by
  $\Sigma^\eta_Y$, $\Oe$ and $\Sigma_\eta$ for a sequence
  $\eta \to 0$.  Let
  $(\Ee, \He) \in L^2(\Oe, \C^3) \times L^2(\Oe, \C^3)$ be solutions
  to \eqref{eq:weak:Eta:E-H}. We assume that the trivial extensions of
  $\Ee$ and $\He$ to $\Omega$ have weak limits,
  \begin{align*} \Ee \weakto E^\hom\,, \qquad \He \weakto H^\hom
    \qquad \text{weakly in } L^2(\Omega, \C^3)\,.
  \end{align*} Then, the equations for $E^\hom$ and $H^\hom$ are given as
  follows, depending on the connectedness properties of the
  micro-structure:
  
  \smallskip
  \noindent {Case 1: {\bf Reflecting}}\quad When $\Sigma^\eta_Y$ is
  asymptotically connected in both directions $e_1$ and $e_2$, then
  $(E^\hom, H^\hom)$ solves \eqref{eq:Maxwell-strong-E-H:Limit} and
  \eqref{eq:Maxwell-strong-E-H:Reflecting} weakly, ie
  \eqref{eq:weak:Limit-NonComplete:E-H} holds for $X= X_{12}$ and
  $Y =Y_-$.

  \smallskip
  \noindent {Case 2: {\bf Inactive}}\quad When $\Sigma^\eta_Y$ is
  asymptotically disconnected in both directions $e_1$ and $e_2$, then
  $(E^\hom, H^\hom)$ solves \eqref{eq:Maxwell-strong-E-H:Limit} and
  \eqref{eq:Maxwell-strong-E-H:Inactive} weakly, ie
  \eqref{eq:weak:Limit-NonComplete:E-H} holds for $X= X_-$ and
  $Y =Y_{12}$.
  
  \smallskip
  \noindent {Case 3: {\bf Polarizing}}\quad Let $i \in \{1,2\}$. If
  $\Sigma^\eta_Y$ is asymptotically connected in the direction $e_i$
  and asymptotically disconnected in the direction $e_j$ for
  $j = 3-i$, then $(E^\hom, H^\hom)$ solves
  \eqref{eq:Maxwell-strong-E-H:Limit} and
  \eqref{eq:Maxwell-strong-E-H:Polarizing} weakly, ie
  \eqref{eq:weak:Limit-NonComplete:E-H} holds for $X= X_i$ and
  $Y =Y_i$.
\end{theorem}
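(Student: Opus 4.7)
The task is to pass to the limit $\eta \to 0$ in \eqref{eq:weak:Eta:E-H} and to identify the correct test-function spaces in the limit system \eqref{eq:weak:Limit-NonComplete:E-H}. I would treat the two equations separately. In both, the key device are the cell functions asserted by Definitions \ref{def:Analytical-DisConnectedness} and \ref{def:Analytical-Connectedness}: at each $\eta$, they supply a small-scale corrector that is compatible with the geometry of $\Sigma_\eta$.

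\textbf{The $H$-equation.} Given a limit test function $\phi \in X$, I would construct an admissible family $\phi^\eta \in H_0(\curl, \Oe)$ with $\phi^\eta \to \phi$ strongly in $L^2(\Omega, \C^3)$ and $\curl \phi^\eta \weakto \curl \phi$ in $L^2(\Omega, \C^3)$, component by component. For a direction $e_i$ in which $\Sigma^\eta_Y$ is asymptotically disconnected, the associated cell function lets one bend $\phi_i$ around each inclusion and enforce $\phi_i^\eta = 0$ on $\Sigma_\eta$ without any constraint on $\phi_i|_\Gamma$; for a direction in which $\Sigma^\eta_Y$ is asymptotically connected, the corresponding corrector kills $\phi_i$ on $\Sigma_\eta$ only at the price of forcing $\phi_i|_\Gamma = 0$. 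Combining the two kinds of correctors component-wise produces precisely the admissible space $X$ of each of the three cases. Substituting $\phi^\eta$ into \eqref{eq:weak:Eta:H} and using $\Ee \weakto E^\hom$, $\He \weakto H^\hom$ together with the strong convergence of the correctors yields \eqref{eq:weak:Limit-NonComplete:H}.

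\textbf{The $E$-equation.} This direction is dual: a limit test function $\psi \in Y$ may exhibit tangential jumps across $\Gamma$, whereas \eqref{eq:weak:Eta:E} admits only test functions in $H(\curl, \Oe)$. If $\Sigma^\eta_Y$ is asymptotically connected in direction $e_i$, the associated cell function supplies a scalar $\chi^\eta$ that is close to $1$ just above $\Sigma_\eta$ and close to $0$ just below, with the transition taking place inside $\Sigma_\eta$. Then the component $\psi_i^\eta \coloneqq \chi^\eta \psi_i|_+ + (1 - \chi^\eta) \psi_i|_-$ lies in $H(\curl, \Oe)$ because the jump is absorbed by $\Sigma_\eta = \Omega \setminus \Oe$; hence arbitrary jumps of $\psi_i$ can be tested, giving $Y = Y_-$ in the connected components. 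If $\Sigma^\eta_Y$ is disconnected in direction $e_i$, no such $\chi^\eta$ exists and only jump-free $\psi_i$ can be lifted, giving $Y = Y_i$ or $Y_{12}$ in that component. Passing to the limit with $\psi^\eta$ then yields \eqref{eq:weak:Limit-NonComplete:E} over the correct space $Y$.

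\textbf{Main obstacle.} The central technical point is the explicit construction of $\phi^\eta$ and $\psi^\eta$ from cell functions while controlling their curls. The Leibniz-type identity $\curl(\chi^\eta \phi) = \chi^\eta \curl \phi + \nabla \chi^\eta \times \phi$ produces error terms $\nabla \chi^\eta \times \phi$ whose uniform $L^2$-boundedness is furnished precisely by the cell-function estimates built into the definition of asymptotic (dis-)connectedness, and whose weak convergence to zero rests on oscillation-averaging of the highly concentrated correctors. Keeping the three geometric cases apart component-by-component, so that the cross-terms between asymptotically connected and asymptotically disconnected directions do not interfere in the limit, is where the distinction between the Reflecting, Inactive, and Polarizing regimes is actually encoded.
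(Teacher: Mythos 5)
Your overall plan — pass to the limit in the $E$- and $H$-equations separately, and enlarge the admissible test-function space one tangential direction at a time using the cell functions — matches the paper's strategy, which decomposes the proof into Lemma~\ref{lem:Homogenization-away-Interface} (the $X_{12}, Y_{12}$ base case) plus Propositions~\ref{prop:Trace-Ei} and~\ref{prop:Trace-Hi} (one additional single-component direction each). Your treatment of the $H$-equation is essentially the paper's: the rescaled cell function $\Phi^{(j)}_\eta(\cdot/\eta)$ vanishes on $\Sigma_\eta$, and testing \eqref{eq:weak:Eta:H} with $\varphi(x)\,\Phi^{(j)}_\eta(x/\eta)$ for a scalar amplitude $\varphi$ gives the limit equation for $\phi=\varphi e_j$.

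Your construction for the $E$-equation, however, has a genuine gap. You posit a \emph{scalar} cutoff $\chi^\eta$ that is close to $1$ above $\Sigma_\eta$ and close to $0$ below, with the transition taking place ``inside $\Sigma_\eta$'', and set $\psi^\eta \coloneqq \chi^\eta\psi_+ + (1-\chi^\eta)\psi_-$. For a thin wire structure, such a $\chi^\eta$ cannot exist with the needed control: outside the wires, $\Gamma\cap\Oe$ covers essentially all of $\Gamma$, so any $H^1$ function passing from $1$ to $0$ across a layer of thickness $\sim\eta$ has $\|\del_3\chi^\eta\|_{L^2(\Oe)}^2 \sim \eta^{-1}$, and the Leibniz term $\nabla\chi^\eta\times(\psi_+-\psi_-)$ does not go to zero. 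The asymptotic-connectedness hypothesis gives you no control on $\nabla\chi^\eta$; the only quantity it controls is $\curl\Psi^{(i)}_\eta$, which is a strictly weaker piece of information for a \emph{vector} field. Indeed the paper's $\Psi^{(i)}_\eta$ is a genuine vector field (a rotated gradient of a harmonic potential in the wire case), whose gradient is large but whose curl is small; its $L^2$-distance to $e_j\one{\{x_3>0\}}$ even diverges like $|\ln r_\eta|^{1/2}$ and is only controlled after multiplying by $\eta^{1/2}$ as in \eqref{eq:CellProblem:Psi:L2}. The paper therefore does not correct a macroscopic $\psi$ by a scalar microscopic cutoff; instead, it takes the oscillating test function to be $\varphi(x)\Psi^{(i)}_\eta(x/\eta)$ with $\varphi$ a fixed, $\eta$-independent scalar amplitude extended by reflection to $\Omega$. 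Then $\curl(\varphi\Psi^{(i)}_\eta(\cdot/\eta)) = \varphi\,\curl(\Psi^{(i)}_\eta(\cdot/\eta)) + \nabla\varphi\times\Psi^{(i)}_\eta(\cdot/\eta)$, both terms converge \emph{strongly} in $L^2(\Omega)$ by Lemma~\ref{lem:Extension-to-Omega}, and the limit test function is $\varphi e_j \one{\{x_3>0\}}$, which gives exactly the one-sided relation \eqref{eq:weak:Interface:Ei}. Your argument inverts the roles: the $\eta$-dependent oscillating object must be vector-valued with controlled curl, not a scalar cutoff with controlled gradient.
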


\begin{lemma}[Trivial part of the limit system]
  \label{lem:Homogenization-away-Interface} Let the setting be that of
  Theorem~\ref{thm:main:homogenization}, no assumptions on
  connectedness of $\Sigma_Y^\eta$. Then, the limit fields
  $(E^\hom, H^\hom)$ satisfy \eqref{eq:weak:Limit-NonComplete:E-H} for
  $X = X_{12}$ and $Y = Y_{12}$.
\end{lemma}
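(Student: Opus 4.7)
The strategy is to pass to the limit $\eta \to 0$ in the weak form \eqref{eq:weak:Eta:E-H} applied to the trivial extensions $\tilde\Ee$, $\tilde\He$ of $\Ee$, $\He$. For test functions $\psi \in Y_{12}$ this is direct, while for $\phi \in X_{12}$ the test function must be truncated to become admissible in $H_0(\curl,\Oe)$; the only genuinely nontrivial step is showing that this truncation converges strongly in $H(\curl,\Omega)$, and it is here that the condition $\phi_i|_\Gamma = 0$ defining $X_{12}$ is used.

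For \eqref{eq:weak:Limit-NonComplete:E} with $\psi \in Y_{12}$: the conditions $\llbracket \psi_1 \rrbracket_\Gamma = \llbracket \psi_2 \rrbracket_\Gamma = 0$ mean that the tangential trace of $\psi$ is continuous across $\Gamma$, so $\psi \in H(\curl,\Omega)$ with $\curl\psi \in L^\infty(\Omega\setminus\Gamma)$. Testing \eqref{eq:weak:Eta:E} with $\psi|_\Oe$ and rewriting the integrals as integrals over $\Omega$ (since $\tilde\Ee, \tilde\He$ vanish on $\Sigma_\eta$) yields
\begin{equation*}
\int_\Omega \tilde\Ee \cdot \curl\psi
= \int_\Omega (i\omega\mu\,\tilde\He + f_h)\cdot\psi - \int_{\Sigma_\eta} f_h\cdot\psi.
\end{equation*}
The error term on $\Sigma_\eta$ tends to zero because $|\Sigma_\eta| \le \eta \to 0$ and $f_h,\psi \in L^2$, while the remaining integrals pass to the limit by the weak convergences $\tilde\Ee \weakto E^\hom$, $\tilde\He \weakto H^\hom$. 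Since $\Gamma$ has Lebesgue measure zero in $\Omega$, this is \eqref{eq:weak:Limit-NonComplete:E}.

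For \eqref{eq:weak:Limit-NonComplete:H} with $\phi \in X_{12}$, the function $\phi$ is not admissible in \eqref{eq:weak:Eta:H} because its tangential trace on $\partial\Sigma_\eta$ need not vanish. I introduce a one-dimensional cutoff $\chi_\eta \in C^\infty(\R;[0,1])$ with $\chi_\eta \equiv 0$ on $[-\eta,2\eta]$, $\chi_\eta \equiv 1$ outside $[-3\eta,3\eta]$, and $|\chi_\eta'| \le C/\eta$, and set $\phi^\eta := \chi_\eta(x_3)\,\phi$. Since $\Sigma_\eta \subset (0,1)^2\times(0,\eta)$, the function $\phi^\eta$ vanishes in an open neighborhood of $\Sigma_\eta$ and on $\partial\Omega$, so $\phi^\eta \in H_0(\curl,\Oe)$; moreover $\|\phi^\eta-\phi\|_{L^2(\Omega)} = O(\sqrt\eta)$.

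The crucial estimate is $\curl\phi^\eta \to \curl\phi$ in $L^2(\Omega)$. From
\begin{equation*}
\curl(\chi_\eta \phi) = \chi_\eta\,\curl\phi + \chi_\eta'(x_3)\,(-\phi_2,\phi_1,0)^T,
\end{equation*}
this reduces to $\|\chi_\eta'(\cdot_3)\,\phi_i\|_{L^2} \to 0$ for $i=1,2$. This is precisely where the condition $\phi_i|_\Gamma = 0$ of $X_{12}$ enters: with $\phi \in C^1(\overline\Omega,\C^3)$ it gives $|\phi_i(x)| \le C|x_3|$, so on the support $\{|x_3|\le 3\eta\}$ of $\chi_\eta'$ the product is uniformly bounded by $C$, while that support has measure $O(\eta)$; hence the $L^2$ norm is $O(\sqrt\eta) \to 0$. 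Having established $\phi^\eta \to \phi$ strongly in $H(\curl,\Omega)$, one tests \eqref{eq:weak:Eta:H} with $\phi^\eta$, rewrites the integrals over $\Omega$ (with a negligible remainder on $\Sigma_\eta$), and passes to the limit by weak-strong pairing to obtain \eqref{eq:weak:Limit-NonComplete:H}. The failure of the cutoff cancellation when $\phi_i|_\Gamma \neq 0$ is exactly the obstruction that forces the connectedness hypotheses in Theorem \ref{thm:main:homogenization} in order to admit test functions from $X_-$.
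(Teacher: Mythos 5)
Your proof is correct and pursues the same basic strategy as the paper's: accept $\psi\in Y_{12}$ directly as a test function because tangential continuity puts it in $H(\curl,\Omega_\eta)$, and modify $\phi\in X_{12}$ near $\Gamma$ so that it becomes admissible in $H_0(\curl,\Omega_\eta)$, using $\phi_1=\phi_2=0$ on $\Gamma$ to control the modification. The technical implementation of the second step differs from the paper's, however, in a way worth noting. The paper first approximates the tangential components $\phi_1,\phi_2\in H_0^1(\Omega\setminus\Gamma)$ by $C_c^\infty(\Omega\setminus\Gamma)$ functions $\phi^k_1,\phi^k_2$ (so that they vanish identically in a neighborhood of $\Gamma$), truncates the normal component on $\{0<x_3<\eta\}$ separately, and then performs the nested limits $\eta\to 0$ followed by $k\to\infty$; the vanishing of $\phi_1,\phi_2$ on $\Gamma$ enters implicitly through the density needed for the $H^1$-approximation. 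You instead multiply all three components by a single one-dimensional cutoff $\chi_\eta(x_3)$ and control the dangerous term $\chi_\eta'(x_3)(-\phi_2,\phi_1,0)$ directly with the pointwise bound $|\phi_i(x)|\le C|x_3|$, which follows from $\phi\in C^1$ and $\phi_i|_\Gamma=0$. This makes the cancellation mechanism fully explicit and replaces the iterated limit by a single one. You also spell out the remainder $\int_{\Sigma_\eta}f_h\cdot\psi$ in the $E$-equation and show it tends to zero, a point the paper handles implicitly (it earlier notes that the trivially extended solutions satisfy the $\Omega$-form under the assumption $f_h|_{\Sigma_\eta}=0$). Both proofs are valid; yours is somewhat more self-contained and direct, and your closing remark correctly identifies $\phi_i|_\Gamma=0$ as precisely the hypothesis whose absence necessitates the cell-function machinery of the main theorem.
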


\begin{proof}
  Let $\psi \in Y_{12}$ be arbitrary, we recall that this implies, in
  particular, that the functions $\psi_1$ and $\psi_2$ are continuous
  across $\Gamma$. The function $\psi$ can therefore be interpreted
  (by restriction) as an element $\psi\in H(\curl, \Omega_\eta)$ --
  jumps in the normal component do not contribute to the curl.  We can
  therefore use $\psi$ as a test-function in
  \eqref{eq:weak:Eta:E}. Passing to the limit provides
  \eqref{eq:weak:Limit-NonComplete:E} for $\psi$.

  \smallskip Let now $\phi \in X_{12}$ be arbitrary, we recall that
  this implies $\phi_1 = \phi_2 = 0$ on $\Gamma$. We approximate
  $\phi$ by a sequence of functions $\phi^k$; for the first two
  components we demand
  $\phi^k_1, \phi^k_2 \in C^\infty_c(\Omega\setminus \Gamma, \C^3)$.
  Regarding the third component, we choose $\phi^k_3 = \phi_3$ on
  $\{ x\in \Omega \mid x_3\not\in (0, \eta) \}$ and $\phi^k_3 = 0$ on
  $\{ x\in \Omega \mid x_3\in (0, \eta) \}$.  We can find an
  approximation with an error
  $\| \phi^k_i - \phi_i\|_{H^1(\Omega)} \le 1/k$ for $i \in \{1,2\}$.
  We note that this also implies
  $\| \curl \phi^k - \curl \phi\|_{L^2(\Omega)} \to 0$ as
  $k\to \infty$.
  
  Since $\Sigma_\eta$ is contained in an $\eta$-neighborhood of
  $\Gamma$, there holds $\phi^k \in H_0(\curl, \Oe)$ for sufficiently
  small $\eta>0$.  We can therefore use $\phi^k$ in
  \eqref{eq:weak:Eta:H}. Passing to the limit $\eta \to 0$, we obtain
  \eqref{eq:weak:Limit-NonComplete:H} with $\phi^k$ instead of $\phi$.
  Taking the limit $k\to \infty$, we obtain
  \eqref{eq:weak:Limit-NonComplete:H} for $\phi$.
\end{proof}

\begin{remark}[Compactly contained obstacles are disconnected]
  \label{rem:compact-inclusions}
  Let all sets $\Sigma_Y^\eta$ be contained in a closed set
  $\Sigma_Y^0\subset Y = (0,1)^3$ that is independent of $\eta$. Then,
  $\Sigma^\eta_Y$ is asymptotically disconnected in both directions
  $e_1$ and $e_2$. Accordingly, we are in the situation of Case 2 of
  Theorem \ref{thm:main:homogenization}: The micro-structure has no
  effect.
\end{remark}

The proof of Remark \ref {rem:compact-inclusions} is provided in the
end of Section \ref {sec:wires}.

\begin{theorem}[Main result for wires]
  \label{thm:main-wires} In the situation of Theorem
  \ref{thm:main:homogenization}, let the inclusions be given by wires
  in direction $i = 1$, $\Sigma_Y^\eta = T_{r_\eta, I_\eta}$ of
  \eqref{def:T_I,r}. We assume that the radii $r_\eta$ of the wires
  are not too small and that the slits $I_\eta$ are not too big in the
  sense that:
  \begin{equation*} \eta |\ln(r_\eta)| \to 0 \qquad \textrm{ and }
    \qquad \eta^{-1} r_\eta^{-2} |I_\eta| \to 0 \,.
  \end{equation*}
  Then, the limit system is given by Theorem
  \ref{thm:main:homogenization}, Case 3: $E^\hom, H^\hom$ solves
  \eqref{eq:Maxwell-strong-E-H:Limit} and
  \eqref{eq:Maxwell-strong-E-H:Polarizing} with $i = 1$.
\end{theorem}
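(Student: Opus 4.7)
The plan is to \emph{reduce} the statement to Theorem~\ref{thm:main:homogenization}, Case~3, by verifying the two purely geometric hypotheses for wires with $i=1$ and $j=2$: namely, that the set $\Sigma_Y^\eta = T_{r_\eta,I_\eta}$ of \eqref{def:T_I,r} is asymptotically connected in direction $e_1$ (Definition~\ref{def:Analytical-Connectedness}) and asymptotically disconnected in direction $e_2$ (Definition~\ref{def:Analytical-DisConnectedness}). Each property amounts to the existence of a cell function with prescribed values on $\Sigma_Y^\eta$ and controlled Dirichlet-type energy; once both are produced, the homogenization conclusion follows directly from Theorem~\ref{thm:main:homogenization} with no further analytic work.

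For connectedness in the wire direction $e_1$, I would appeal to Proposition~\ref{prop:construction:Psi-wires-perfect:curved}. The heuristic behind the construction is that a cell function taking the value $1$ on the wire can be extended in the transverse $(e_2,e_3)$-plane at a cost governed by the two-dimensional capacity of a disk of radius $r_\eta$ in the unit cell, which behaves like $1/|\ln r_\eta|$. After rescaling by $\eta$ and summing over the $O(\eta^{-2})$ cells covering $\Gamma$, the total energy is of order $\eta\,|\ln r_\eta|$, so the assumption $\eta\,|\ln r_\eta|\to 0$ is exactly the smallness required by the definition. The second assumption $\eta^{-1}r_\eta^{-2}|I_\eta|\to 0$ enters when the cell function is continued across the slits $I_\eta$: bridging a gap in a wire of radius $r_\eta$ forces a transition across a cross-section of area $\sim r_\eta^2$, so the cost per slit scales like $r_\eta^{-2}|I_\eta|$ and the aggregate cost like $\eta^{-1}r_\eta^{-2}|I_\eta|$.

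For disconnectedness in $e_2$, I would invoke Proposition~\ref{prop:construction:Phi-wires-thin}. Since the wire has cross-sectional area $O(r_\eta^2)\to 0$, a function taking opposite constant values on the two sides of the wire with a transition of width comparable to $r_\eta$ across the axis has vanishing Dirichlet energy in the appropriate scaling. The slits are harmless in this direction, since they only enlarge the complement of the wire; if convenient, one may first close the slits and then apply the monotonicity principle of Proposition~\ref{prop:GeometricMonotonicity}, noting that disconnectedness of the larger solid wire implies disconnectedness of the slitted wire.

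The technical core lies in the connectedness direction, where the two hypotheses must cooperate: $\eta\,|\ln r_\eta|\to 0$ supplies the slack needed to propagate the cell function along each unbroken segment of wire, while $\eta^{-1}r_\eta^{-2}|I_\eta|\to 0$ caps the additional expense of jumping over the gaps $I_\eta$. The sharpness of these scalings is addressed in Section~\ref{sec:criticality}: should either fail, one expects the wire to behave as asymptotically disconnected in $e_1$, moving the limit to Case~2 of Theorem~\ref{thm:main:homogenization}. With the two cell functions in hand, the proof of Theorem~\ref{thm:main-wires} reduces to a one-line application of Theorem~\ref{thm:main:homogenization}, Case~3.
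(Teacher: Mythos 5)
Your overall strategy is correct and matches the paper's: reduce the theorem to Case~3 of Theorem~\ref{thm:main:homogenization} by verifying asymptotic connectedness in $e_1$ and asymptotic disconnectedness in $e_2$. The capacity heuristic for the $e_1$ direction (energy cost per cell $\sim |\ln r_\eta|$, summed over $O(\eta^{-2})$ cells, plus the $r_\eta^{-2}|I_\eta|$ contribution from bridging slits) is essentially what the actual cell-function estimates encode. However, both of the propositions you invoke are the wrong ones.

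For $e_1$ connectedness you cite Proposition~\ref{prop:construction:Psi-wires-perfect:curved}. That is the \emph{deformation} argument: it transfers (dis-)connectedness from a reference obstacle to a bi-Lipschitz image of it, and therefore presupposes that connectedness has already been established for some reference set. It does not construct a cell function for the straight wire $T_{r_\eta,I_\eta}$, and a slitted wire is not a Lipschitz image of an unslit one (they have different topology), so the deformation route is not available here anyway. The proposition you want is Proposition~\ref{prop:construction:Psi-wires-perfect}, which directly builds $\Psi_\eta$ from the $2$D potentials of Lemmas~\ref{lem:vr-psi-wires}--\ref{lem:psi-wires} and whose proof realizes the scaling you describe.

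For $e_2$ disconnectedness you cite Proposition~\ref{prop:construction:Phi-wires-thin}. That proposition concerns disconnectedness in the \emph{wire direction} $e_1$ and is only valid under the opposite asymptotics $\eta|\ln r_\eta|\to\infty$; it belongs to the criticality discussion of Section~\ref{sec:criticality}, not here. The correct statement is Proposition~\ref{prop:construction:Phi-wires-ortho}. Your heuristic for this direction also does not match the actual mechanism: the paper's cell function $\Phi_\eta=\Phi$ is \emph{$\eta$-independent}, obtained from a curl-free $2$D flow around the fixed disk $B_R(z_0)$ (Lemma~\ref{lem:phi-wires}), not from a scalar transition layer of width $\sim r_\eta$. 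Shrinking cross-sectional area plays no role; what matters is that any cylinder with a compactly contained cross-section is blocked transversally, and the slits only shrink the obstacle, so the monotonicity principle (Proposition~\ref{prop:GeometricMonotonicity}) applies, exactly as you note. Once both citations are corrected, the proof collapses to the one-line reduction you describe.
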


\begin{proof} The wires are connecting in direction $e_1$ by
  Proposition \ref {prop:construction:Psi-wires-perfect}.  The wires
  are disconnected in direction $e_2$ by Proposition \ref
  {prop:construction:Phi-wires-ortho}. Theorem
  \ref{thm:main:homogenization} therefore yields the limit system with
  Case 3.
\end{proof}

\begin{remark}[On the weak solution concept]
  \label{rem:weak-implies-strong}
  The above choice of weak equations yields indeed a weak solution
  concept for \eqref{eq:Maxwell-strong-E-H:Limit} with interface
  condition. We verify this fact in the polarizing case with $i =
  1$. Let $(E^\hom, H^\hom)$ be a regular solution of \eqref
  {eq:weak:Limit-NonComplete:E-H} with $X = X_1$ and $Y = Y_1$, the
  solution is required to be regular enough to allow for classical
  integration by parts.  As a first step, we use an arbitrary
  test-function $\phi\in C_c^\infty(\Omega\setminus\Gamma, \C^3)$ in
  \eqref {eq:weak:Limit-NonComplete:H} and obtain
  \eqref{eq:Maxwell-strong-E-H:Limit:2}.  Similarly, we can use an
  arbitrary test-function $\psi\in C_c^\infty(\Omega, \C^3)$ in \eqref
  {eq:weak:Limit-NonComplete:E} and obtain
  \eqref{eq:Maxwell-strong-E-H:Limit:1}.

  Let $\phi\in X_1$ be an arbitrary test-function. We calculate with
  \eqref {eq:weak:Limit-NonComplete:H}:
  \begin{align*}
    0  &= \int\limits_{\Omega_+} H^\hom \cdot \curl \phi
         + (i \omega \eps E^\hom - f_e) \cdot \phi
         + \int\limits_{\Omega_-} H^\hom \cdot \curl \phi
         + (i \omega \eps E^\hom - f_e) \cdot \phi\\
       &= \int\limits_{\Omega} (\rot H^\hom + i \omega \eps E^\hom - f_e) \cdot \phi
         + \int\limits_{\Gamma} \llbracket H^\hom \rrbracket_\Gamma \cdot \phi \times e_3
         = \int\limits_{\Gamma} \llbracket H^\hom \rrbracket_\Gamma \cdot \phi \times e_3\,.
  \end{align*}
  Since the values of $\phi_2$ on $\Gamma$ can be chosen arbitrarily,
  we can conclude from our calculation
  $\llbracket H^\hom \rrbracket_\Gamma \cdot e_1 = 0$, which provides
  the second relation of \eqref{eq:Maxwell-strong-E-H:Polarizing}.

  Similarly, we can calculate for an arbitrary test-function
  $\psi\in Y_1$ with \eqref {eq:weak:Limit-NonComplete:E}:
  \begin{align*}
    0  &= \int\limits_{\Omega_+} E^\hom \cdot \curl \psi
         - (i \omega \mu H^\hom + f_h) \cdot \psi
         + \int\limits_{\Omega_-} E^\hom \cdot \curl \psi
         - (i \omega \mu H^\hom + f_h) \cdot \psi\\
       &= \int\limits_{\Omega} (\curl E^\hom
         - i \omega \mu H^\hom + f_h) \cdot \psi
         + \int\limits_{\Gamma} E^\hom\cdot \llbracket \psi \rrbracket_\Gamma \times e_3
         = \int\limits_{\Gamma} E^\hom\cdot \llbracket \psi \rrbracket_\Gamma \times e_3\,.
  \end{align*}
  Since the jump $\llbracket \psi \rrbracket_\Gamma$ can have an
  arbitrary second component, our calculation implies
  $E^\hom \cdot e_1 = 0$ on $\Gamma$, and hence the first relation of
  \eqref{eq:Maxwell-strong-E-H:Polarizing}.
\end{remark}

\begin{remark}[$L^2$-boundedness of the solution sequence]
  \label{rem:L2-bound}
  The statement of Theorem~\ref {thm:main:homogenization} assumes the
  boundedness of the solution sequence
  $(\Ee, \He) \in L^2(\Oe, \C^3) \times L^2(\Oe, \C^3)$.  The
  derivation of this boundedness is non-trivial due to the possible
  presence of localized (surface) waves along the microstructure. We
  plan to investigate such resonance type phenomena in a future
  publication. The present work deals with the homogenization of the
  system, which requires the derivation of equations for weak limits.
\end{remark}

\begin{remark}[Density, symmetry properties of the limit system]
  \label{rem:additional-solution-prop}
  One can expect density of the test-functions in all cases of
  Table~\ref{tab:WeakFormSpaces}.  More precisely: $X_{12}$ is dense
  in the space of functions $E\in H(\curl, \Omega)$ with
  $E_1|_\Gamma = E_2|_\Gamma = 0$. The space $Y_{12}$ is dense in the
  space of functions $H\in H(\curl, \Omega)$ with
  $\llbracket H_1\rrbracket_\Gamma = \llbracket H_2\rrbracket_\Gamma =
  0$. Analogously for $X_i$ and $Y_i$ and $X_-$ and $Y_-$.  In all
  cases except for $X_i$ and $Y_i$, these are actually classical
  results.

  Once the density is verified, the limit system has actually a
  symmetric form in solutions and test-functions. Let us formulate
  this statement in the polarizing case: Test functions are
  $\phi\in\overline X_i$ and $\psi\in\overline Y_i$, the closures are
  taken in $H(\curl, \Omega)$. The solution is in the same space,
  $E^\hom\in\overline X_i$ and $H^\hom\in\overline Y_i$. Such a
  symmetric formulation is probably useful in the derivation of
  existence and uniqueness results.
\end{remark}

\section{Cell functions and asymptotic
(dis-)connectedness}\label{sec:Test-functions} The homogenization of
\eqref{eq:weak:Eta:E-H} is based on an appropriate choice of
test-functions. We will actually {\em define} connectedness properties
of $\Sigma_\eta$ with the existence of sequences of functions in a
cell-geometry; the sequence of functions, if it exists, is used to
construct test-functions for the original problem in $\Omega_\eta$.

To motivate these cell functions, let us describe the homogenization
procedure: Let $i \in \{1,2\}$ and $j= 3-i$.  Lemma
\ref{lem:Homogenization-away-Interface} shows
\eqref{eq:weak:Limit-NonComplete:E} for $Y= Y_{12}$. This space does
not provide information on $E^\hom$ at the interface $\Gamma$.  Let us
assume that we want to use the larger set of test-functions $Y =
Y_i$. Functions in $Y_i$ have no jump in the $i$-th component, but
they can have a jump in the $j$-th component. Thus, we cannot use them
in the $\eta$-problem \eqref{eq:weak:Eta:E}. Instead, we consider a
sequence of functions $\Psi_\eta^{(i)}$ that approximate a jump in the
$j$-th component at $\Gamma$. Passing to the limit, we obtain
\eqref{eq:weak:Limit-NonComplete:E} for $Y = Y_i$. If such a sequence
$\Psi_\eta^{(i)}$ exists, we say that $\Sigma^\eta_Y$ is connected in
the direction $e_i$, see
Definition~\ref{def:Analytical-Connectedness}.

For \eqref{eq:weak:Limit-NonComplete:H} the procedure is
similar. Lemma \ref{lem:Homogenization-away-Interface} shows
\eqref{eq:weak:Limit-NonComplete:H} for $X= X_{12}$, let us assume
that we want to use $X = X_i$.  The $j$-component of a function in
$X_i$ does not vanish on $\Gamma$, we cannot use it directly as a
test-function. But let us assume that there is a sequence of functions
$\Phi_\eta^{(j)}$, vanishing in $\Sigma_\eta$ and approximating, for
$\eta \to 0$, functions that do not vanish in the $j$-th component at
$\Gamma$. With these test-functions, we can pass to the limit and
obtain \eqref{eq:weak:Limit-NonComplete:H} for $X = X_i$. If such a
sequence $\Phi_\eta^{(j)}$ exists, we say that $\Sigma^\eta_Y$ is
disconnected in the direction $e_j$, see
Definition~\ref{def:Analytical-DisConnectedness}.

\begin{figure}[ht]
  \centering
  \begin{subfigure}{.49\textwidth}
    \centering
    \includegraphics[width=0.99\linewidth]{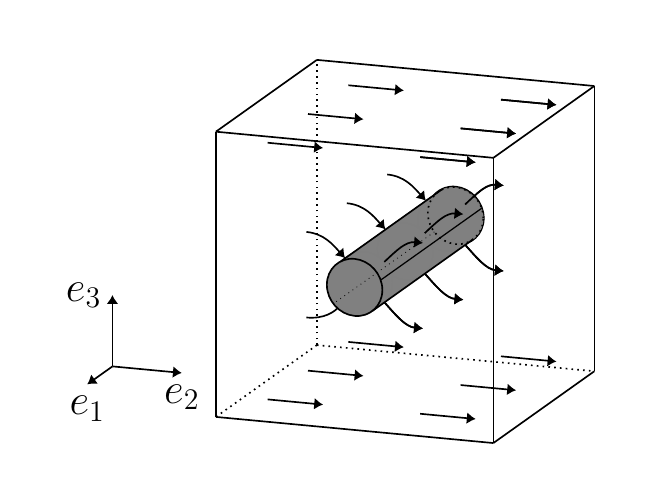}
    \subcaption{The cell function $\Phi_\eta^{(2)}$ shows that the
      wire is disconnected in direction $e_2$}
    \label{fig:Sigma-Slit}
  \end{subfigure}
  \hspace{\fill}
  \begin{subfigure}{.49\textwidth}
    \centering
    \includegraphics[width=0.99\columnwidth]{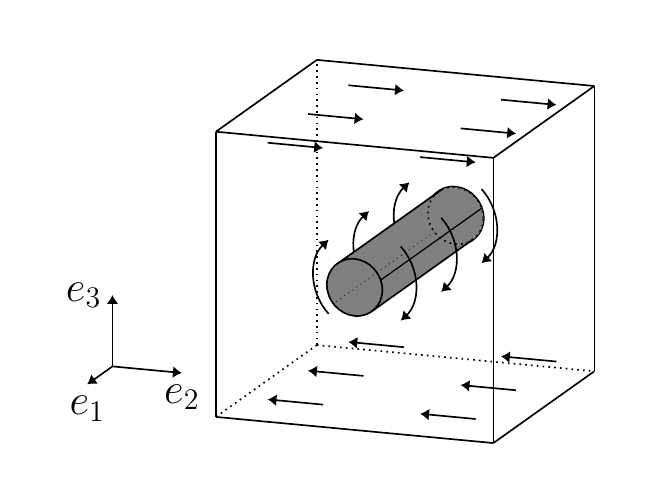}
    \subcaption{The cell function $\Psi_\eta^{(1)}$ shows that the
      wire is connected in direction $e_1$}
  \end{subfigure}
  \caption{Visualization of the cell functions in $Y$ for $i=1$, $j=2$
    and $\Sigma^\eta_Y = T^{(1)}_{0.15}\,$.  The illustration in (B)
    does not show the cell function $\Psi_\eta^{(1)}$, but sketches
    $2\Psi_\eta^{(1)} - e_2$. }
  \label{fig:Cellfunction}
\end{figure} 

\subsection*{The cell functions}
 The cell functions are defined in the cylinder $Z$ and we consider
$Z^\eta \subset Z$ given by
\begin{equation*}
  Z \coloneqq (0,1)^2 \times \R\,, \qquad Z^\eta
  \coloneqq Z \setminus \Sigma^\eta_Y\,.
\end{equation*}
We define the function space
\begin{align*} &\Hd(\rot,Z) \coloneqq \{u|_{Z}\mid u \in
H_\loc(\rot,\R^3)\text{ is } e_1\text{- and } e_2\text{-periodic}\}
\,.
\end{align*} We always identify a function $\Hd(\rot,Z)$ with its
periodic extension to $\R^3$.

\subsection{\texorpdfstring{$E$}{E}-type cell functions and asymptotic
  disconnectedness} Let $j \in \{1,2\}$, we use cell functions
$\Phi^{(j)}_\eta \in \Hd(\rot,Z)$ that converge to $e_j$ for
$|x_3| \to \infty$, but vanish on $\Sigma^\eta_Y$. Important is an
asymptotic quantitative control of the $\curl$.

\begin{definition}[Asymptotically disconnected obstacles]
  \label{def:Analytical-DisConnectedness} Let $\Sigma^\eta_Y$ be a
  sequence of obstacles as in Section \ref{sec:Geometry}. For
  $j \in \{1,2\}$, we say that the sequence $\Sigma^\eta_Y$ is
  asymptotically disconnected in the direction $e_j$ if there exists a
  sequence of cell functions $\Phi^{(j)}_\eta \in \Hd(\rot,Z)$ of the
  $E$-type with direction $e_j$ as follows:
  \begin{subequations}\label{eq:CellProblem:Phi}
    \begin{align} \label{eq:CellProblem:Phi:L2} \eta^{\tfrac{1}{2}}
      \|\Phi^{(j)}_\eta - e_j \|_{L^2(Z)} &\to 0 \,,\\
      \label{eq:CellProblem:Phi:obstacle} \Phi^{(j)}_\eta
      |_{\Sigma^\eta_Y} &= 0 \,, \\
	\label{eq:CellProblem:Phi:curl} \eta^{-\tfrac{1}{2}} \|\curl
      \Phi^{(j)}_\eta\|_{L^2(Z)} &\to 0\,.
    \end{align}
  \end{subequations}
\end{definition}

\subsection{\texorpdfstring{$H$}{H}-type cell functions and asymptotic
  connectedness} We are interested in cell functions
$\Psi^{(i)}_\eta \in \Hd(\rot,Z)$ that converge to a direction $e_1$
or $e_2$ for $x_3 \to \infty$, but to $0 $ for $x_3 \to
-\infty$. Important is an asymptotic quantitative control of the
$\curl$.

\begin{definition}[Asymptotically connecting obstacles]
  \label{def:Analytical-Connectedness} Let $\Sigma^\eta_Y$ be a
  sequence of obstacles as in Section \ref{sec:Geometry}. For
  $i \in \{1,2\}$ and $j = 3-i$, we say that the sequence $\Sigma^\eta_Y$ is
  asymptotically connecting in the direction $e_i$ if there exists a
  sequence of cell functions $\Psi^{(i)}_\eta \in \Hd(\rot,Z)$ of the
  $H$-type with direction $e_i$ as follows:
  \begin{subequations}\label{eq:CellProblem:Psi}
    \begin{align}
      \label{eq:CellProblem:Psi:L2} \eta^{\tfrac{1}{2}}
      \|\Psi_\eta^{(i)} - e_j \one{\{x_3 >0\}}\|_{L^2(Z^\eta)} &\to 0 \,, \\
      \label{eq:CellProblem:Psi:curl} \eta^{-\tfrac{1}{2}} \|\curl
      \Psi_\eta^{(i)}\|_{L^2(Z^\eta)} &\to 0\,.
    \end{align}
  \end{subequations}
\end{definition}

\begin{remark}[Cell functions pointing in direction $e_3$]
  \label{rem:Trivial:Phi-Psi} We note that
  \eqref{eq:CellProblem:Phi} for $j=3$ has the trivial solution
  $e_3 \one{\{x_3>1\} \cup \{x_3<0\}}$. Similarly,
  \eqref{eq:CellProblem:Psi} for $j=3$ has the trivial solution
  $e_3 \one{\{x_3>1\}}$.
\end{remark}

\subsection{Rescaling of the cell functions} To employ the above
introduced cell functions for homogenization, we scale them by $\eta$
and consider, e.g.~the function $\Phi^{(j)}_\eta(\cdot/\eta)$. The
subsequent lemma collects the properties of the rescaled functions.

\begin{lemma}[$\eta$-scaling of the cell
  functions]\label{lem:Extension-to-Omega} Let
  $\Phi^{(j)}_\eta \in \Hd(\rot,Z)$ be a sequence that satisfies
  \eqref{eq:CellProblem:Phi}. Then,
  $\Phi^{(j)}_\eta(\cdot/\eta) \in H(\curl, \Omega)$ is a sequence
  with the property $\Phi^{(j)}_\eta(\cdot/\eta)|_{\Sigma_\eta} =0$
  and with the convergences
  \begin{subequations}\label{eq:Extension-to-Omega:Phi}
    \begin{align}\label{eq:Extension-to-Omega:Phi:1}
      \Phi^{(j)}_\eta(\cdot/\eta)
      &\to e_j&& \textrm{ in } L^2(\Omega) \, ,
      \\\label{eq:Extension-to-Omega:Phi:2} \curl
      (\Phi^{(j)}_\eta(\cdot/\eta))
      & \to 0&& \textrm{ in } L^2(\Omega) \, .
    \end{align}
  \end{subequations}
  
  Let $\Psi^{(i)}_\eta \in \Hd(\rot,Z)$ be a sequence that satisfies
  \eqref{eq:CellProblem:Psi} for $j = 3-i$.  Then,
  $\Psi^{(i)}_\eta(\cdot/\eta) \in H(\curl, \Omega)$ and
  \begin{subequations}\label{eq:Extension-to-Omega:Psi}
    \begin{align}\label{eq:Extension-to-Omega:Psi:1}
      \one{\Oe}(\cdot)\Psi^{(i)}_\eta(\cdot/\eta)
      &\to e_j \one{\{x_3 >0\}}(\cdot)&& \textrm{ in } L^2(\Omega) \, ,
      \\\label{eq:Extension-to-Omega:Psi:2}
      \one{\Oe}(\cdot) \curl (\Psi^{(i)}_\eta(\cdot/\eta))
      & \to 0&& \textrm{ in } L^2(\Omega) \, .
    \end{align}
  \end{subequations}
\end{lemma}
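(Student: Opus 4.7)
The plan is to exploit the rescaling $y = x/\eta$ together with the $1$-periodicity of the cell functions in the tangential directions $e_1, e_2$; all four convergences will reduce to the same scaling computation. The decisive point is that the exponents $\eta^{\pm 1/2}$ placed in Definitions~\ref{def:Analytical-DisConnectedness} and \ref{def:Analytical-Connectedness} have been chosen exactly so that the $\eta$-factor produced by this change of variables cancels the factor inherited from the hypotheses.

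First I would check that $\Phi^{(j)}_\eta(\cdot/\eta)$ (respectively $\Psi^{(i)}_\eta(\cdot/\eta)$) really belongs to $H(\curl, \Omega)$: this is immediate because $\Phi^{(j)}_\eta$ extended periodically lies in $H_\loc(\curl, \R^3)$. The identity $\Phi^{(j)}_\eta(\cdot/\eta)|_{\Sigma_\eta} = 0$ follows from \eqref{eq:CellProblem:Phi:obstacle} together with $\Sigma_\eta = \eta\Sigma^\eta_\# \cap \Omega$. Next I would carry out the change of variables $x = \eta y$, which yields
\[
\int_\Omega |\Phi^{(j)}_\eta(x/\eta) - e_j|^2 \dx = \eta^3 \int_{(0, 1/\eta)^2 \times (-1/\eta, 1/\eta)} |\Phi^{(j)}_\eta(y) - e_j|^2 \dd y,
\]
and then use the $1$-periodicity in $(y_1, y_2)$ to cover the tangential domain of integration by at most $\lceil 1/\eta\rceil^2 \le C \eta^{-2}$ unit cells. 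Together the two steps produce the bound $C\eta\, \|\Phi^{(j)}_\eta - e_j\|_{L^2(Z)}^2$, which tends to $0$ by \eqref{eq:CellProblem:Phi:L2}. For the curl convergence I would combine the same scaling with the chain-rule identity $\curl(\Phi^{(j)}_\eta(\cdot/\eta))(x) = \eta^{-1}(\curl \Phi^{(j)}_\eta)(x/\eta)$, which contributes an additional $\eta^{-2}$; the resulting bound is $C \eta^{-1} \|\curl \Phi^{(j)}_\eta\|_{L^2(Z)}^2$, which tends to $0$ by \eqref{eq:CellProblem:Phi:curl}.

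The assertions for $\Psi^{(i)}_\eta$ in \eqref{eq:Extension-to-Omega:Psi} would be obtained by the same chain of computations with two minor adaptations. First, $\Psi^{(i)}_\eta$ is not required to vanish on $\Sigma^\eta_Y$, and the norms in \eqref{eq:CellProblem:Psi} are posited only on $Z^\eta$; after rescaling, this restriction is precisely what the indicator $\one{\Oe}$ on the left-hand side of \eqref{eq:Extension-to-Omega:Psi} encodes. Second, the target profile is $e_j\, \one{\{x_3 > 0\}}$ rather than $e_j$, but this affects only the integrand and not the scaling factors. I do not foresee a genuine obstacle in this lemma: the entire argument is a mechanical consequence of how the $L^2$-norm and the curl scale under $x \mapsto x/\eta$. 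The only minor technical item to be careful with is that $1/\eta$ need not be an integer, so the tangential covering must be stated with $\lceil 1/\eta\rceil$; this contributes only a multiplicative constant and does not affect the limit.
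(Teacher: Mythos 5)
Your proposal follows exactly the route of the paper's proof: rescaling $x = \eta y$, covering the tangential range by $O(\eta^{-2})$ periodicity cells (and you correctly note the $\lceil 1/\eta\rceil$ issue that the paper also addresses), and the chain rule $\curl\bigl(\Phi^{(j)}_\eta(\cdot/\eta)\bigr) = \eta^{-1}(\curl\Phi^{(j)}_\eta)(\cdot/\eta)$ to get the extra $\eta^{-2}$ for the curl norm; the exponents $\eta^{\pm1/2}$ in the definitions are indeed tuned to cancel exactly what this produces. One small imprecision to flag: in the $\Psi$ case the claim that the indicator $\one{\Oe}$ ``precisely encodes'' the restriction to $Z^\eta$ is not quite right, since the target $e_j\one{\{x_3>0\}}$ on the left-hand side of \eqref{eq:Extension-to-Omega:Psi:1} is \emph{not} cut off by $\one{\Oe}$; the integral over $\Sigma_\eta$ therefore contributes the extra term $\|e_j\one{\{x_3>0\}}\|_{L^2(\Sigma_\eta)}^2 = \eta^3\,|K_\eta|\,\|e_j\|_{L^2(\Sigma^\eta_Y)}^2 = O(\eta)$, which the paper writes out explicitly and which also tends to zero, so the argument survives.
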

      
\begin{proof}
  The calculations for \eqref {eq:Extension-to-Omega:Phi} and \eqref
  {eq:Extension-to-Omega:Psi} are almost identical. We present the
  calculations for \eqref {eq:Extension-to-Omega:Psi}, since these are
  slightly more interesting due to the presence of characteristic
  functions in the limit.
  
  The set of relevant indices $k = (k_1, k_2)$ is
  $K_\eta \coloneqq \{ k\in \Z^2 \mid \eta (k + Z) \cap \Omega \neq
  \emptyset\}$.  The number of elements of $K_\eta$ is of the order
  $\eta^{-2}$; since we do not assume $\eta^{-1}\in \N$, we have
  $|K_\eta| \leq (\eta^{-1}+1)^2$ and, for $\eta$ small,
  $|K_\eta| \leq 2\eta^{-2}$.  The convergence
  \eqref{eq:Extension-to-Omega:Psi:1} is obtained from
  \eqref{eq:CellProblem:Psi:L2} with the following computation:
  \begin{align*}
    &\left\|\one{\Oe} \Psi^{(i)}_\eta(\cdot/\eta) -e_j \one{\{x_3 >0\}} \right\|_{L^2(\Omega)}^2
      \leq \sum\limits_{k \in K_\eta} \left\|\one{\Oe}\Psi^{(i)}_\eta(\cdot/\eta) -e_j \one{\{x_3 >0\}} \right\|_{L^2(\eta k + \eta Z)}^2
    \\
    &\qquad = \eta^3 \left(\sum\limits_{k \in K_\eta} \left\|\Psi^{(i)}_\eta -e_j \one{\{x_3 >0\}} \right\|_{L^2(Z^\eta)}^2 + \left\|e_j \right\|_{L^2(\Sigma^\eta_Y)}^2 \right)
    \\
    &\qquad \leq \eta^3\, 2\eta^{-2} \left(\|\Psi^{(i)}_\eta -e_j\one{\{x_3 >0\} } \|_{L^2(Z^\eta)}^2 + \left\|e_j \right\|_{L^2(\Sigma^\eta_Y)}^2 \right)
      \to 0 \, .
  \end{align*}
  Using
  $\curl (\Psi^{(i)}_\eta(\cdot/\eta)) = \eta^{-1}(\curl
  \Psi^{(i)}_\eta)(\cdot/\eta)$ and
  \eqref{eq:CellProblem:Psi:curl}, a similar computation
  gives:
  \begin{align*}
    &\|\one{\Oe}(\curl \Psi^{(i)}_\eta(\cdot/\eta))\|_{L^2(\Omega)}^2 
      \leq \sum\limits_{k \in K_\eta} \left\|\eta^{-1} (\curl \Psi^{(i)}_\eta)(\cdot/\eta) \right\|_{L^2(\eta k + \eta Z^\eta)}^2\\
    &\qquad = \eta^{3} \sum\limits_{k \in K_\eta} \|\eta^{-1} \rot \Psi^{(i)}_\eta\|_{L^2(Z^\eta)}^2
      = \eta^{3}\, 2\eta^{-2} \|\eta^{-1} \rot \Psi^{(i)}_\eta\|_{L^2(Z^\eta)}^2 \\
    &\qquad = \eta^{-1}\, 2\|\rot \Psi^{(i)}_\eta\|_{L^2(Z^\eta)}^2 \to 0 \,.
  \end{align*}
  This shows \eqref{eq:Extension-to-Omega:Psi:2}.
\end{proof}
 
\section{Homogenization of Maxwell's equations}
\label{sec:Hom-Maxwell}
In this section, we pass to the limit $\eta \to 0$ in
\eqref{eq:Strong:Maxwell:eta} and prove Theorem
\ref{thm:main:homogenization}. The theorem states that system
\eqref{eq:weak:Limit-NonComplete:E-H} holds for test-functions
$\phi\in X$ and $\psi\in Y$, with specific spaces $X$ and $Y$. By
Lemma \ref{lem:Homogenization-away-Interface}, system
\eqref{eq:weak:Limit-NonComplete:E-H} is satisfied for test-functions
$\phi\in X_{12}$ and $\psi\in Y_{12}$. 

Let us recall the principle of our notation for test-functions: The
spaces $X_-$ and $Y_-$ have no condition on $\Gamma$, the spaces $X_i$
and $Y_i$ contain a condition on the $i$-th component on $\Gamma$, the
spaces $X_{12}$ and $Y_{12}$ contain conditions on both tangential
components on $\Gamma$.  When we want to show, e.g.~that an equation
holds for all $\phi\in X_1$, we show that: (a) the equation holds for
all $\phi\in X_{12}$, and (b) the equation holds for all functions
$\phi\in C^1_0(\overline{\Omega}, \C\, e_2)$.

\subsection{Limits of \texorpdfstring{$\Ee$}{E-eta}-fields at the
  interface \texorpdfstring{$\Gamma$}{Gamma}}
\label{ssec:Limit-Ee}
Here, we want to verify \eqref{eq:weak:Limit-NonComplete:E} for
certain test-functions. We can restrict ourselves to the $E$-equation
and study the following situation: Let $\Ee$ and $f^\eta$ be two
sequences in $L^2(\Omega, \C^3)$ with
\begin{subequations}\label{eq:eta:curl-E=f}
  \begin{align}\label{eq:eta:curl-E=f:1}
    \int\limits_{\Omega} \Ee \cdot \rot \psi
    &= \int\limits_{\Omega} f^\eta \cdot \psi \,
    &&\text{for every }\psi \in  H(\curl,\Omega) \, ,
    \\\label{eq:eta:curl-E=f:2}
    E^\eta &= 0&& \text{on } \Sigma_\eta\,.
  \end{align} 
\end{subequations}
We recall that this can be regarded as a weak formulation of the
equation $\rot \Ee = f^\eta$ in $\Omega_\eta$ and
$\Ee \in H_0(\curl, \Omega_\eta)$. In particular, by
\eqref{eq:eta:curl-E=f:2}, $f^\eta|_{\Sigma_\eta} = 0$.  Note
that \eqref{eq:eta:curl-E=f:1} corresponds to
\eqref{eq:weak:Eta:E} for
$f^\eta \coloneqq \one{\Oe}(i \omega \mu H^\eta + f_h)$.

\begin{proposition}[Interface condition for the limit $E$]
  \label{prop:Trace-Ei}
  Let $\Omega$, $\Sigma^\eta_Y$, $\Sigma_\eta$ and $\Gamma$ be given
  as in Section \ref{sec:Geometry}, let $i \in \{1,2\}$ and $j = 3-i$
  indicate orthogonal directions. We assume that $\Sigma^\eta_Y$ is
  asymptotically connected in the direction $e_i$ in the sense of
  Definition~\ref{def:Analytical-Connectedness}, i.e.~there is a
  sequence $\Psi^{(i)}_\eta$ satisfying
  \eqref{eq:CellProblem:Psi}. Let $\Ee$ and $f^\eta$ be sequences
  satisfying \eqref{eq:eta:curl-E=f} that converge weakly in
  $L^2(\Omega, \C^3)$ to limit functions $E$ and $f$, respectively.
  Then, the limits satisfy, for both $+$ and $-$:
  \begin{align}\label{eq:weak:Interface:Ei}
    \int\limits_{\Omega_\pm} E \cdot \curl \psi 
    &= \int\limits_{\Omega_\pm} f \cdot \psi 
    \quad \text{ for every } \psi \in C^1(\overline{\Omega_\pm}, e_j\, \C)\,.
  \end{align}
  We recall that \eqref{eq:weak:Interface:Ei} encodes
  $E_i|_\Gamma = 0$ (for both traces, from $\Omega_+$ and from
  $\Omega_-$).
\end{proposition}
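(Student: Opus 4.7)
My plan is to extract the interface information by testing \eqref{eq:eta:curl-E=f:1} not against $\psi$ directly (which is only defined on one side of $\Gamma$ and would give an illegal test function with a jump across $\Gamma$) but against a sequence $\tilde\psi_\eta\in H(\curl,\Omega)$ whose $L^2$-limit is $\psi\,\one{\Omega_\pm}$. The asymptotically connecting cell functions $\Psi^{(i)}_\eta$ from Definition~\ref{def:Analytical-Connectedness} are exactly the tool that makes such a ``jump'' test function admissible, because after $\eta$-rescaling they converge in $L^2(\Omega)$ to $e_j\,\one{\{x_3>0\}}$ with vanishingly small curl on $\Oe$, by Lemma~\ref{lem:Extension-to-Omega}.

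Concretely, given $\psi=\varphi\,e_j$ with $\varphi\in C^1(\overline{\Omega_+},\C)$, I would extend $\varphi$ to some $\tilde\varphi\in C^1(\overline{\Omega},\C)$ and set
$$\tilde\psi_\eta(x)\coloneqq \tilde\varphi(x)\,\Psi^{(i)}_\eta(x/\eta)\in H(\curl,\Omega).$$
The product rule gives $\curl\tilde\psi_\eta=\tilde\varphi\,\curl\Psi^{(i)}_\eta(\cdot/\eta)+\nabla\tilde\varphi\times\Psi^{(i)}_\eta(\cdot/\eta)$. By \eqref{eq:Extension-to-Omega:Psi} one then has $\tilde\psi_\eta\to\psi\,\one{\Omega_+}$ in $L^2(\Omega,\C^3)$, while on $\Oe$ the first summand of $\curl\tilde\psi_\eta$ vanishes in $L^2$ and the second converges strongly in $L^2(\Omega)$ to $\nabla\tilde\varphi\times e_j\,\one{\{x_3>0\}}=\curl(\varphi\,e_j)\,\one{\Omega_+}=\curl\psi\,\one{\Omega_+}$, since $\tilde\varphi=\varphi$ on $\Omega_+$. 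Inserting $\tilde\psi_\eta$ into \eqref{eq:eta:curl-E=f:1} and using that both $\Ee$ and $f^\eta$ vanish on $\Sigma_\eta$, so that the integrals in fact run only over $\Oe$, a standard weak--strong passage with $\Ee\weakto E$ and $f^\eta\weakto f$ in $L^2(\Omega,\C^3)$ yields
$$\int_{\Omega_+} E\cdot\curl\psi = \int_{\Omega_+} f\cdot\psi,$$
which is the $+$-version of \eqref{eq:weak:Interface:Ei}.

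For the $-$ case I would run the same argument with the modified cell function $e_j-\Psi^{(i)}_\eta$, which by linearity of \eqref{eq:CellProblem:Psi} satisfies the analogous estimates with $e_j\,\one{\{x_3>0\}}$ replaced by $e_j\,\one{\{x_3<0\}}$. The one point that requires care is that $\curl\Psi^{(i)}_\eta$ is a priori uncontrolled inside $\Sigma^\eta_Y$; this is handled cleanly by noting that $\Ee$ and $f^\eta$ vanish on $\Sigma_\eta$, so that only the $L^2(Z^\eta)$-estimate on $\curl\Psi^{(i)}_\eta$ -- which is exactly what Definition~\ref{def:Analytical-Connectedness} provides -- ever enters the computation. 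The remainder is a routine weak--strong limit passage.
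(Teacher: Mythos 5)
Your proposal is correct and follows essentially the same route as the paper: test \eqref{eq:eta:curl-E=f:1} with $\varphi(\cdot)\,\Psi^{(i)}_\eta(\cdot/\eta)$, use the product rule, multiply the integrand by $\one{\Oe}$ (legitimate since $\Ee$ and $f^\eta$ vanish on $\Sigma_\eta$) so that only the $L^2(Z^\eta)$-control of \eqref{eq:CellProblem:Psi} is needed, and pass to the limit via the strong convergences of Lemma~\ref{lem:Extension-to-Omega}. The only cosmetic differences are that the paper extends $\varphi$ by reflection (yielding a Lipschitz extension) rather than invoking a $C^1$ extension, and that the paper only treats $\Omega_+$ explicitly; your device of replacing $\Psi^{(i)}_\eta$ by $e_j-\Psi^{(i)}_\eta$ for the $\Omega_-$ case is a clean way to make that omission explicit.
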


\begin{proof}
  We present the proof for $\Omega_+$. Let
  $\psi \in C^1(\overline{\Omega_+}, e_j\, \C)$ be arbitrary, we write
  the function as $\psi = \fhi\, e_j$ with
  $\varphi \in C^1(\overline{\Omega_+}, \C)$.

  We extend $\varphi$ to $\Omega$ by reflection across $\Gamma$,
  i.e.~$\varphi(x_1,x_2,x_3) = \varphi(x_1,x_2,-x_3)$, respectively,
  for $x \in \Omega_-$. The extended function $\varphi$ is continuous
  across $\Gamma$ and of class $H^1(\Omega)$.  Let $\Psi^{(i)}_\eta$
  be the cell function of \eqref{eq:CellProblem:Psi}. We note that the
  strong convergences \eqref{eq:Extension-to-Omega:Psi} imply the
  strong $L^2(\Omega)$-convergence
  \begin{align}
    \begin{aligned}\label{eq:strong-convergence:Product:Psi}
      \one{\Oe}(\cdot)\rot\big(\varphi(\cdot) \Psi^{(i)}_\eta(\cdot
      /\eta) \big)&= \varphi\one{\Oe} (\curl
      \Psi^{(i)}_\eta(\cdot/\eta)) + \nabla \varphi \times
      \Psi^{(i)}_\eta(\cdot/\eta)\one{\Oe}
      \\
      &\to \nabla \varphi \times e_j \one{\{x_3 >0\}} = \rot(\varphi
      e_j) \one{\{x_3 >0\}}\,.
    \end{aligned}
  \end{align}
  Because of
  $\varphi(\cdot) \Psi^{(i)}_\eta(\cdot/\eta) \in H(\curl,\Omega)$, we
  can use this test-function in \eqref{eq:eta:curl-E=f:1}.  Since
  $\Ee$ and $f^\eta$ vanish in $\Oe$, we can multiply the integrand of
  \eqref{eq:eta:curl-E=f:1} also with $\one{\Oe}$.  We calculate for
  the limit $\eta\to 0$:
  \begin{align*}
    0 &= \int\limits_{\Omega} f^\eta(x) \cdot
        \varphi(x) \one{\Oe}(x)\Psi^{(i)}_\eta(x/\eta) - \Ee(x) \cdot
        \one{\Oe}(x)\rot\big(\varphi(x) \Psi^{(i)}_\eta(x/\eta) \big)
        \dx
        \displaybreak[2]\\
      &\to \int\limits_{\Omega} f(x) \cdot \varphi(x) e_j \one{\{x_3
        >0\}}(x) - E(x) \cdot \rot(\varphi(x) e_j) \one{\{x_3 >0\}}(x)
        \dx
        \displaybreak[2]\\
      &= \int\limits_{\Omega_+} f \cdot \varphi\, e_j - E \cdot
        \rot(\varphi\, e_j)
        = \int\limits_{\Omega_+} f \cdot \psi - E \cdot
        \rot(\psi)\,.
  \end{align*}
  This was the claim.
\end{proof}

\subsection{Limits of \texorpdfstring{$\He$}{H-eta}-fields at the
  interface \texorpdfstring{$\Gamma$}{Gamma}}\label{ssec:Limit-He}
We turn to the verification of \eqref{eq:weak:Limit-NonComplete:H} for
test-functions $\phi$.  As in the last subsection, we consider only
the relevant equation. Here, we consider sequences $H^\eta$ and
$f^\eta$ in $L^2(\Omega, \C^3)$ with
\begin{align}\label{eq:eta:curl-H=f:1}
  \int\limits_{\Omega} \He \cdot \rot \phi
  &= \int\limits_{\Omega} f^\eta \cdot \phi  \,
    \quad \text{for every }\phi \in H_0(\curl, \Omega) \textrm{ with }
    \phi|_{\Sigma_\eta} = 0\,.
\end{align} 
We recall that this is a weak formulation of $\rot \He = f^\eta$ in
$\Omega_\eta$.  Note that \eqref{eq:eta:curl-H=f:1} is
\eqref{eq:weak:Eta:H} for
$f^\eta \coloneqq \one{\Oe}(-i \omega \eps E^\eta + f_e)$.

\begin{proposition}[Interface condition for the limit $H$]
  \label{prop:Trace-Hi}
  Let $\Omega$, $\Sigma^\eta_Y$, $\Sigma_\eta$ and $\Gamma$ be given
  as in Section \ref{sec:Geometry}, let $i \in \{1,2\}$ and $j = 3-i$
  indicate orthogonal directions. We assume that $\Sigma^\eta_Y$ is
  asymptotically disconnected in the direction $e_j$ in the sense of
  Definition~\ref{def:Analytical-DisConnectedness}, i.e.~there is a
  sequence $\Phi^{(j)}_\eta$ satisfying
  \eqref{eq:CellProblem:Phi}. Let $\He$ and $f^\eta$ be sequences
  satisfying \eqref{eq:eta:curl-H=f:1}, with $\He\weakto H$ and
  $f^\eta\weakto f$, weakly in $L^2(\Omega, \C^3)$.  Then, the limits
  satisfy
  \begin{align}\label{eq:weak:Interface:Hi}
    \int\limits_{\Omega} H \cdot \curl \phi &= \int\limits_{\Omega} f \cdot \phi
    \quad \text{ for every } \phi \in C^1_0(\overline{\Omega}, e_j \C)\,.
  \end{align}
  We recall that \eqref{eq:weak:Interface:Hi} encodes, in the weak
  sense, the condition $\llbracket H_i\rrbracket_\Gamma = 0$.
\end{proposition}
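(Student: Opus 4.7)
The plan is to mirror the proof of Proposition \ref{prop:Trace-Ei}, interchanging the roles of the two cell functions. Where the $E$-argument used the $H$-type cell function $\Psi^{(i)}_\eta$ (designed to create a prescribed tangential behavior at $\Gamma$), the $H$-argument here should use the $E$-type cell function $\Phi^{(j)}_\eta$ (designed to vanish on $\Sigma_\eta$ while being essentially constant away from it). Given an arbitrary test-function $\phi = \varphi\, e_j$ with $\varphi \in C^1_0(\overline{\Omega}, \C)$, we cannot plug $\phi$ directly into \eqref{eq:eta:curl-H=f:1} because $\phi$ does not vanish on $\Sigma_\eta$. The natural modification is
\[
  \phi_\eta(x) \coloneqq \varphi(x)\, \Phi^{(j)}_\eta(x/\eta)\,.
\]

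I would first check that $\phi_\eta$ is an admissible test-function in \eqref{eq:eta:curl-H=f:1}. The property $\Phi^{(j)}_\eta|_{\Sigma^\eta_Y} = 0$ from \eqref{eq:CellProblem:Phi:obstacle} gives $\phi_\eta|_{\Sigma_\eta} = 0$, and since $\varphi$ vanishes on $\partial\Omega$ and $\Phi^{(j)}_\eta(\cdot/\eta) \in H_\loc(\curl, \R^3)$ by periodicity, the product lies in $H_0(\curl, \Omega)$.

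Next, I would extract the strong convergences needed for passage to the limit. Using the product rule,
\[
  \curl \phi_\eta = \varphi\, \curl\bigl(\Phi^{(j)}_\eta(\cdot/\eta)\bigr) + \nabla\varphi \times \Phi^{(j)}_\eta(\cdot/\eta)\,.
\]
Lemma \ref{lem:Extension-to-Omega}, applied to $\Phi^{(j)}_\eta$, gives $\Phi^{(j)}_\eta(\cdot/\eta) \to e_j$ in $L^2(\Omega)$ and $\curl(\Phi^{(j)}_\eta(\cdot/\eta)) \to 0$ in $L^2(\Omega)$. Boundedness of $\varphi$ and $\nabla\varphi$ then yields $\phi_\eta \to \varphi\, e_j = \phi$ and $\curl \phi_\eta \to \nabla\varphi \times e_j = \curl(\varphi\, e_j) = \curl \phi$, both strongly in $L^2(\Omega, \C^3)$.

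Finally, I would insert $\phi_\eta$ into \eqref{eq:eta:curl-H=f:1} and pass to the limit. Combining the weak convergences $H^\eta \weakto H$ and $f^\eta \weakto f$ in $L^2(\Omega, \C^3)$ with the strong $L^2$-convergences just established, the weak-strong pairing gives
\[
  \int_{\Omega} H \cdot \curl \phi
  = \lim_{\eta \to 0} \int_{\Omega} H^\eta \cdot \curl \phi_\eta
  = \lim_{\eta \to 0} \int_{\Omega} f^\eta \cdot \phi_\eta
  = \int_{\Omega} f \cdot \phi\,,
\]
which is \eqref{eq:weak:Interface:Hi}. I do not anticipate any serious obstacle: the proof is essentially the dual of Proposition \ref{prop:Trace-Ei}, and, in fact, slightly simpler because $\phi \in C^1_0(\overline{\Omega}, e_j\, \C)$ is already globally defined and has no jump across $\Gamma$, so no reflection extension across the interface is required.
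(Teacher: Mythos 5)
Your proposal is correct and takes essentially the same approach as the paper: you form the test function $\varphi\,\Phi^{(j)}_\eta(\cdot/\eta)$, verify admissibility in \eqref{eq:eta:curl-H=f:1} via \eqref{eq:CellProblem:Phi:obstacle}, apply Lemma~\ref{lem:Extension-to-Omega} for the strong $L^2$-convergences of the function and its curl, and conclude by weak--strong pairing. Your closing remark that the reflection extension from Proposition~\ref{prop:Trace-Ei} is unnecessary here also matches the paper.
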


\begin{proof}
  Let $\phi \in C^1_0(\overline{\Omega}, e_j \C)$ be arbitrary, we write it
  as $\phi = \fhi\, e_j$ for $\varphi \in C^1_0(\overline{\Omega}, \C)$. Let
  $\Phi^{(j)}_\eta$ be the cell function of
  \eqref{eq:CellProblem:Phi}.  The strong convergences
  \eqref{eq:Extension-to-Omega:Phi} imply the strong $L^2(\Omega)$
  convergence
  \begin{align}
    \begin{aligned}\label{eq:strong-convergence:Product:Phi}
      \rot\big(\varphi(\cdot) \Phi^{(j)}_\eta(\cdot /\eta) \big)&=
      \varphi \curl (\Phi^{(j)}_\eta(\cdot/\eta)) + \nabla \varphi
      \times \Phi^{(j)}_\eta(\cdot/\eta)
      \\
      &\to \nabla \varphi \times e_j = \rot(\varphi\, e_j)\,.
    \end{aligned}
  \end{align}
  Because of
  $\varphi(\cdot) \Phi^{(j)}_\eta(\cdot/\eta) \in H_0(\curl,\Oe)$, we
  can use this test-function in \eqref{eq:eta:curl-H=f:1}.  We
  calculate for the limit $\eta\to 0$:
  \begin{align*}
    0 &= \int\limits_{\Omega} f^\eta(x) \cdot
        \varphi(x)\Phi^{(j)}_\eta(x/\eta) - \He(x) \cdot
        \rot\big(\varphi(x) \Phi^{(j)}_\eta(x/\eta) \big) \dx
    \\
      &\to
        \int\limits_{\Omega} f(x) \cdot \varphi(x) e_j - H(x) \cdot
        \rot(\varphi(x) e_j) \dx
        = \int\limits_{\Omega} f \cdot \phi - H \cdot \rot(\phi) 
        \,.
  \end{align*}
  This was the claim.
\end{proof}

\smallskip

\begin{proof}[Proof of Theorem \ref{thm:main:homogenization}]
  We have to show that \eqref{eq:weak:Limit-NonComplete:E-H} is
  satisfied for the right choice of function spaces
  $Y \in \{Y_-, Y_i, Y_{12}\}$ and $X \in \{X_-, X_i, X_{12}\}$.  We
  recall that we have already verified \eqref
  {eq:weak:Limit-NonComplete:E} for $\phi\in X_{12}$ and
  $\psi \in Y_{12}$, see
  Lemma~\ref{lem:Homogenization-away-Interface}.

  \smallskip
  If $\Sigma^\eta_Y$ is asymptotically connecting in the direction
  $e_i$, Proposition~\ref{prop:Trace-Ei} shows that \eqref
  {eq:weak:Limit-NonComplete:E} is satisfied in $\Omega_\pm$ for
  $\psi\in C^1(\overline{\Omega_\pm}, e_j\, \C)$.  Together with the fact
  that the equation holds for all $\psi \in Y_{12}$, this implies that
  \eqref {eq:weak:Limit-NonComplete:E} holds for all $\psi\in Y_i$.

  Using this observation for $i= 1$ and $i=2$, we obtain the claim of
  the theorem in Case 1, where the structure is connected in both
  directions and the space of test-functions is $Y_- = Y_1 \cup Y_2$.

  \smallskip When $\Sigma^\eta_Y$ is asymptotically disconnected in
  the direction $e_j$, then Proposition~\ref{prop:Trace-Hi} shows that
  \eqref{eq:weak:Limit-NonComplete:H} holds for
  $\phi \in C^1_0(\overline{\Omega}, e_j \C)$.  Together with the fact
  that the equation holds for all $\phi \in X_{12}$, this implies that
  \eqref {eq:weak:Limit-NonComplete:E} holds for all $\phi\in X_i$.
  This implies the theorem in Case 2 where the structure is
  disconnected in both directions and the space of test-functions is
  $X_- = X_1 \cup X_2$.

  \smallskip Finally, let us consider Case 3, $\Sigma^\eta_Y$ is
  asymptotically connecting in the direction $e_i$ and asymptotically
  disconnected in direction $e_j$. As shown above, this implies that
  test-functions $\psi\in Y_i$ and $\phi\in X_i$ are permitted. These
  are the desired spaces of test-functions in Case 3.
\end{proof}

\section{Connectedness properties of general geometric structures}
\label{sec:connected-general}

The last section concluded the homogenization result. We now change
our perspective and turn to the analysis of connectedness properties
of geometric structures. In this section, we conclude connectedness
properties of one geometric structure from the connectedness
properties of another geometric structure.

The observations of this section are very valuable when they are
combined with our results on wires in Section \ref{sec:wires}.  The
monotonicity property allows to conclude: Every structure that
contains $e_i$-connected wires is also $e_i$-connected. The
deformation argument allows to conclude: Every smoothly deformed
$e_i$-connected wire is also $e_i$-connected.

\subsection{Monotonicity property for the obstacles}
\label{ssec:MonotonicityArgument}
We show that obstacles $\Sigma^\eta_Y$ are asymptotically connecting
if we can find subsets that are asymptotically connecting.  Vice
versa, obstacles $\widetilde\Sigma^\eta_Y$ are asymptotically disconnected
if we can find supersets that are asymptotically disconnected.
\begin{proposition}[Monotonicity]
  \label{prop:GeometricMonotonicity}
  Let $i \in \{1,2\}$ be a direction, let $\Sigma^\eta_Y$ and
  $\widetilde{\Sigma}^\eta_Y$ be two sequences of obstacles that are
  ordered in the sense of an inclusion:
  $\widetilde{\Sigma}^\eta_Y \subset \Sigma^\eta_Y$. Then, there holds:

  (i) When $\widetilde{\Sigma}^\eta_Y$ is asymptotically connecting in
  direction $i$, then also $\Sigma^\eta_Y$ is.

  (ii) When $\Sigma^\eta_Y$ is asymptotically disconnected in
  direction $i$, then also $\widetilde{\Sigma}^\eta_Y$ is.
\end{proposition}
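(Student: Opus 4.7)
The plan is to observe that both parts of Proposition~\ref{prop:GeometricMonotonicity} follow more or less directly from the definitions, by taking the cell functions provided by the hypothesis and reusing them unchanged as cell functions for the other obstacle. The key point is that the defining conditions \eqref{eq:CellProblem:Phi} and \eqref{eq:CellProblem:Psi} behave monotonically in opposite ways with respect to inclusion of the obstacles, and this monotonicity matches the direction of the implications claimed.

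For part (i), assume $\widetilde{\Sigma}^\eta_Y$ is asymptotically connecting in direction $e_i$, with cell functions $\widetilde{\Psi}^{(i)}_\eta \in \Hd(\rot,Z)$ satisfying \eqref{eq:CellProblem:Psi} on $\widetilde{Z}^\eta \coloneqq Z \setminus \widetilde{\Sigma}^\eta_Y$. Since $\widetilde{\Sigma}^\eta_Y \subset \Sigma^\eta_Y$, we have $Z^\eta = Z \setminus \Sigma^\eta_Y \subset \widetilde{Z}^\eta$. I would then simply set $\Psi^{(i)}_\eta \coloneqq \widetilde{\Psi}^{(i)}_\eta$ and estimate
\begin{equation*}
  \|\Psi^{(i)}_\eta - e_j \one{\{x_3>0\}}\|_{L^2(Z^\eta)}
  \le \|\widetilde{\Psi}^{(i)}_\eta - e_j \one{\{x_3>0\}}\|_{L^2(\widetilde{Z}^\eta)}\,,
  \quad
  \|\curl \Psi^{(i)}_\eta\|_{L^2(Z^\eta)} \le \|\curl \widetilde{\Psi}^{(i)}_\eta\|_{L^2(\widetilde{Z}^\eta)}\,,
\end{equation*}
since restriction to a smaller measurable set can only decrease an $L^2$ norm. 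Multiplying by $\eta^{1/2}$, respectively $\eta^{-1/2}$, and using the hypotheses for $\widetilde{\Psi}^{(i)}_\eta$ yields \eqref{eq:CellProblem:Psi:L2} and \eqref{eq:CellProblem:Psi:curl} for $\Psi^{(i)}_\eta$ with respect to $\Sigma^\eta_Y$.

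For part (ii), assume $\Sigma^\eta_Y$ is asymptotically disconnected in direction $e_i$, with cell functions $\Phi^{(i)}_\eta \in \Hd(\rot,Z)$ satisfying \eqref{eq:CellProblem:Phi}. I would set $\widetilde{\Phi}^{(i)}_\eta \coloneqq \Phi^{(i)}_\eta$ and verify the three conditions of Definition~\ref{def:Analytical-DisConnectedness} applied to $\widetilde{\Sigma}^\eta_Y$. The $L^2$-bound \eqref{eq:CellProblem:Phi:L2} and the curl-bound \eqref{eq:CellProblem:Phi:curl} are taken over all of $Z$ and therefore do not depend on the obstacle at all, so they carry over without change. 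The vanishing condition \eqref{eq:CellProblem:Phi:obstacle} follows from $\widetilde{\Sigma}^\eta_Y \subset \Sigma^\eta_Y$, because $\Phi^{(i)}_\eta|_{\Sigma^\eta_Y} = 0$ forces $\widetilde{\Phi}^{(i)}_\eta|_{\widetilde{\Sigma}^\eta_Y} = 0$ as well.

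There is no real obstacle in this proof: it is essentially a bookkeeping check that the quantitative conditions defining asymptotic (dis-)connectedness are monotone in the obvious way, namely that shrinking the obstacle makes the $\Phi$-condition easier and that enlarging the obstacle makes the $\Psi$-condition easier (the latter because the domain of integration $Z^\eta$ shrinks). The only minor point worth spelling out is that $\widetilde{\Psi}^{(i)}_\eta$ and $\Phi^{(i)}_\eta$ do lie in the correct space $\Hd(\rot,Z)$ for the new obstacle, but this space is independent of $\Sigma^\eta_Y$, so nothing has to be checked.
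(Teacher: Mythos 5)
Your proof is correct and follows essentially the same route as the paper: reuse the cell functions unchanged and observe that for $\Psi$ the norms are taken over $Z^\eta$, which shrinks as the obstacle grows, while for $\Phi$ the $L^2$ and curl conditions are over all of $Z$ and only the vanishing condition \eqref{eq:CellProblem:Phi:obstacle} depends on the obstacle and is monotone under inclusion. You spell out the inequalities a bit more explicitly than the paper does, but the argument is the same.
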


\begin{proof}
  We consider $\widetilde{\Sigma}^\eta_Y$ that is asymptotically
  connecting. By definition of this property, there exists a sequence
  of cell functions $\Psi^{(i)}_\eta \in \Hd(\curl, Z)$ that satisfies
  \eqref{eq:CellProblem:Psi} for an integration domain
  $\widetilde{Z}^\eta \coloneqq Z \setminus \widetilde{\Sigma}^\eta_Y$. The
  same sequence of cell functions $\Psi^{(i)}_\eta \in \Hd(\curl, Z)$
  satisfies \eqref{eq:CellProblem:Psi} for the integration
  domain $Z^\eta$, since this domain is smaller:
  $Z^\eta = Z \setminus \Sigma^\eta_Y \subset Z \setminus
  \widetilde{\Sigma}^\eta_Y = \widetilde{Z}^\eta$.  We conclude that
  $\Sigma^\eta_Y$ is asymptotically connecting.
	
  \smallskip Let $\Sigma^\eta_Y$ be asymptotically disconnected. Then,
  there exists a sequence of cell functions
  $\Phi^{(i)}_\eta \in \Hd(\curl, Z)$ that satisfies
  \eqref{eq:CellProblem:Phi} and vanishes on
  $\Sigma^\eta_Y$. In particular, $\Phi^{(i)}_\eta \in \Hd(\curl, Z)$
  vanishes on $\widetilde{\Sigma}^\eta_Y\subset \Sigma^\eta_Y$. The same
  sequence can be used for $\widetilde{\Sigma}^\eta_Y$ and we conclude
  that $\widetilde{\Sigma}^\eta_Y$ is asymptotically disconnected.
\end{proof}

\subsection{General deformation argument}
\label{ssec:DeformationArgument}

The following transformation argument allows to extend the results for
straight geometries to deformed geometries.

\begin{proposition}[Deformation of inclusions]
  \label{prop:construction:Psi-wires-perfect:curved}
  Let $i \in \{1,2\}$ and $\widehat{\Sigma}_Y^\eta$ be a sequence of
  obstacles that is asymptotically connecting in the sense of
  Definition \ref{def:Analytical-Connectedness} (resp.~disconnected in
  the sense of Definition \ref{def:Analytical-DisConnectedness}) in
  the direction $e_i$. Let the sequence of deformation maps
  $\varphi_\eta \colon Y \to Y$ be bi-Lipschitz with
  $\varphi_\eta = \id$ on $\{ x_3 = 1\}$ and on $\{ x_3 = 0\}$, and
  such that $\varphi_\eta - \id$ is $e_1$- and $e_2$-periodic. We
  assume for a constant $C>0$ the uniform boundedness
  \begin{equation}
    \label{eq:est:diff:nabla}
    \|\nabla \varphi_\eta\|_{L^\infty(\R^n)}
    + \|\nabla \varphi_\eta^{-1}\|_{L^\infty(\R^n)} \leq C\,.
  \end{equation}
  Then, $\Sigma_Y^\eta \coloneqq \varphi_\eta(\widehat{\Sigma}_Y^\eta)$ is
  a sequence of obstacles that is asymptotically connecting in the
  sense of Definition \ref{def:Analytical-Connectedness}
  (resp.~disconnected in the sense of Definition
  \ref{def:Analytical-DisConnectedness}) in the direction $e_i$.
\end{proposition}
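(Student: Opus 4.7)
The plan is to transport the given cell functions through the deformation by a covariant Piola transform. First, I extend each $\varphi_\eta$ to a global bi-Lipschitz self-map of $Z$ by setting $\varphi_\eta(y) = y$ for $y_3 \leq 0$ and for $y_3 \geq 1$. Since $\varphi_\eta = \id$ on $\{y_3 = 0\}$ and $\{y_3 = 1\}$, the extension is continuous and globally bi-Lipschitz with the same bounds \eqref{eq:est:diff:nabla}, and the tangential periodicity is preserved.

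Given $\widehat{\Psi}_\eta^{(i)}$ satisfying \eqref{eq:CellProblem:Psi} for $\widehat{\Sigma}_Y^\eta$ (respectively $\widehat{\Phi}_\eta^{(i)}$ in the disconnected case), I define
\begin{equation*}
  \Psi_\eta^{(i)}(y) \coloneqq \bigl[D\varphi_\eta(\hat{y})\bigr]^{-T}\, \widehat{\Psi}_\eta^{(i)}(\hat{y}), \qquad \hat{y} \coloneqq \varphi_\eta^{-1}(y),
\end{equation*}
and analogously $\Phi_\eta^{(i)}$. Since $\varphi_\eta^{-1}(\Sigma_Y^\eta) = \widehat{\Sigma}_Y^\eta$, the vanishing of the original cell functions on the obstacle transfers to $\Psi_\eta^{(i)}$ on $\Sigma_Y^\eta$, and tangential periodicity is inherited from both factors. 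At the surfaces $\{y_3 = 0, 1\}$, where the Piola transform inside $Y$ meets the identity extension outside, the constraint $\varphi_\eta = \id$ forces the first two columns of $D\varphi_\eta$ to coincide with $e_1, e_2$, so only the $e_3$-component of $\Psi_\eta^{(i)}$ can jump; such a purely normal jump contributes nothing to the distributional curl, and hence $\Psi_\eta^{(i)} \in \Hd(\curl, Z)$.

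The estimates \eqref{eq:CellProblem:Psi:L2}--\eqref{eq:CellProblem:Psi:curl} follow from the standard pullback identity for $1$-forms,
\begin{equation*}
  \curl \Psi_\eta^{(i)}(y) = \frac{1}{\det D\varphi_\eta(\hat{y})}\, D\varphi_\eta(\hat{y})\, (\curl \widehat{\Psi}_\eta^{(i)})(\hat{y}),
\end{equation*}
together with $dy = |\det D\varphi_\eta(\hat{y})|\, d\hat{y}$ and \eqref{eq:est:diff:nabla} (which forces $|\det D\varphi_\eta| \geq c > 0$). Change of variables then yields $\|\curl \Psi_\eta^{(i)}\|_{L^2(Z^\eta)} \leq C \|\curl \widehat{\Psi}_\eta^{(i)}\|_{L^2(\widehat{Z}^\eta)}$, preserving \eqref{eq:CellProblem:Psi:curl}. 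For \eqref{eq:CellProblem:Psi:L2}, I split $Z$ into $Z \setminus Y$ (where $\varphi_\eta = \id$, so $\Psi_\eta^{(i)} = \widehat{\Psi}_\eta^{(i)}$ and the estimate is inherited) and $Y$ (where $e_j \one{\{y_3 > 0\}} = e_j$); inside $Y$, the decomposition $\Psi_\eta^{(i)} - e_j = D\varphi_\eta^{-T}(\widehat{\Psi}_\eta^{(i)} - e_j)\circ\varphi_\eta^{-1} + (D\varphi_\eta^{-T} - I)e_j\circ\varphi_\eta^{-1}$ gives
\begin{equation*}
  \|\Psi_\eta^{(i)} - e_j\|_{L^2(Y)} \leq C \|\widehat{\Psi}_\eta^{(i)} - e_j\|_{L^2(\widehat{Y})} + \|(D\varphi_\eta^{-T} - I) e_j\|_{L^2(Y)},
\end{equation*}
the second term being uniformly bounded on the finite-measure set $Y$. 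After multiplication by $\eta^{1/2}$, both contributions vanish. The disconnected case is handled identically, with $|\Sigma_Y^\eta| \leq C |\widehat{\Sigma}_Y^\eta|$ used for the contribution coming from integrating over the obstacle.

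The main technical hurdle is the rigorous justification of the pullback curl identity for a merely bi-Lipschitz $\varphi_\eta$; this is settled by the classical fact that the covariant Piola transform is an isomorphism of $H(\curl)$ whenever $\varphi_\eta, \varphi_\eta^{-1} \in W^{1,\infty}$, since it coincides with the pullback of $1$-forms and therefore commutes with the exterior derivative in the distributional sense.
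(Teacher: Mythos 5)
Your proof is correct and follows essentially the same route as the paper: define the transported cell functions via the covariant Piola transform, use the pullback identity for the curl (Monk, Cor.~3.58) together with the change of variables and the uniform bounds \eqref{eq:est:diff:nabla}, and split the estimates between $Y$ and $Z\setminus Y$. A minor added value is your explicit observation that the junction at $\{y_3=0,1\}$ produces only a normal jump (since $\varphi_\eta=\id$ there forces the first two columns of $D\varphi_\eta$ to be $e_1,e_2$), hence no contribution to the distributional curl; the paper leaves this point implicit.
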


\begin{proof}
  We present a proof for the case of asymptotically connectedness. For
  asymptotically disconnected obstacles the calculations are
  analogous.  When $\widehat{\Sigma}_Y^\eta$ is asymptotically connected
  in the direction $e_i$, there exists a sequence
  $\widehat{\Psi}^{(i)}_\eta \in \Hd(\curl, Z)$ that satisfies
  \eqref{eq:CellProblem:Psi} for the domain
  $\widehat{Z}^\eta = Z \setminus \widehat{\Sigma}_\eta$.
	
  We use the covariant Piola transform and define $\Psi^{(i)}_\eta$ by
  \begin{align*}
    \Psi^{(i)}_\eta(\varphi_\eta(x))
    = D \varphi_\eta^{-\top}(x) \widehat{\Psi}^{(i)}_\eta(x)\,,
  \end{align*}
  where $D \varphi_\eta$ denotes the Jacobian matrix,
  i.e.~$(D\varphi_\eta)_{l,m} = \partial_{x_m} (\varphi_\eta)_l$. The
  transformation is such that the curl of the new field depends only
  on the curl of the given field (and not on other derivatives):
  \begin{align}\label{trafo:curl-piola}
    (\rot \Psi^{(i)}_\eta) \circ \varphi_\eta
    = \det(D \varphi_\eta)^{-1} D \varphi_\eta\, \rot \widehat{\Psi}^{(i)}_\eta\,,
  \end{align} see~\cite[Corollary~3.58]{Monk2003a}.
  Moreover, since $\varphi_\eta -\id$ is $e_1$- and $e_2$-periodic,
  this periodicity is transferred from $\widehat{\Psi}^{(i)}_\eta$ to $\Psi^{(i)}_\eta$
  and there holds $\Psi^{(i)}_\eta \in \Hd(\rot,Z)$.

  In order to check \eqref{eq:CellProblem:Psi:L2} for
  $\Psi_\eta^{(i)}$, we have to verify, for $j=3-i$, that
  $\eta \|\Psi_\eta^{(i)} - e_j \one{\{x_3 >0\}}\|_{L^2(Z^\eta)}^2$
  vanishes in the limit $\eta\to 0$.  Since $\Psi_\eta^{(i)}$ and
  $\widehat{\Psi}^{(i)}_\eta$ coincide outside $Y$, we only have to show
  $\eta \|\Psi_\eta^{(i)} - e_j \|_{L^2(Y\setminus \Sigma_Y^\eta)}^2
  \to 0$.  The uniform essential boundedness of $D \varphi_\eta$ and
  $(D \varphi_\eta)^{-1}$ implies
  $\|\det(\nabla \varphi_\eta)^{-1}\|_{L^\infty(\R^n)} \leq C$ and we
  can calculate
  \begin{align*}
    \eta \|\Psi_\eta^{(i)}
    &- e_j\|_{L^2(Y\setminus \Sigma_Y^\eta)}^2 
      = \eta \int\limits_{Y\setminus \Sigma_Y^\eta} |\Psi_\eta^{(i)} - e_j|^2 
    \\
    &= \eta\int\limits_{Y\setminus \varphi_\eta^{-1}(\Sigma_Y^\eta)}
      |\det(D \varphi_\eta)|\, |\Psi_\eta^{(i)} \circ \varphi_\eta - e_j |^2
    \displaybreak[2]\\
    &\le
      \eta\int\limits_{Y\setminus \widehat{\Sigma}_Y^\eta} |\det(D \varphi_\eta)|\,
      | D \varphi_\eta^{-\top} \widehat{\Psi}_\eta^{(i)} |^2 + C\, \eta
      \displaybreak[2]\\
    & \le C\, \eta \int\limits_{Y\setminus \widehat{\Sigma}_Y^\eta} 
      | \widehat{\Psi}_\eta^{(i)} |^2 + C\, \eta
      \le C\, \eta \int\limits_{Y\setminus \widehat{\Sigma}_Y^\eta} 
      | \widehat{\Psi}_\eta^{(i)} - e_j |^2 + C\, \eta \to 0\,,
  \end{align*}
  where we allow that $C$ changes from one inequality to the next. In
  the convergence we used that $\widehat{\Psi}_\eta^{(i)}$ satisfies
  \eqref{eq:CellProblem:Psi:L2}.

  To show that $\Psi_\eta^{(i)}$ satisfies
  \eqref{eq:CellProblem:Psi:curl}, we use \eqref{trafo:curl-piola} and
  calculate, on the relevant domain $Y\setminus \Sigma_Y^\eta$,
  \begin{align*}
    &\eta^{-1} \|\curl \Psi_\eta^{(i)}\|_{L^2(Y\setminus \Sigma_Y^\eta)}^2
      = \eta^{-1} \int\limits_{Y\setminus \Sigma_Y^\eta} |\curl \Psi_\eta^{(i)}|^2 \dx\\
    &\qquad =
      \eta^{-1} \int\limits_{Y\setminus \widehat\Sigma_Y^\eta} |\det(D \varphi_\eta)|
      |(\curl \Psi_\eta^{(i)}) \circ \varphi_\eta|^2
      \displaybreak[2]\\
    &\qquad =	\eta^{-1} \int\limits_{Y\setminus \widehat\Sigma_Y^\eta} |\det(D \varphi_\eta)|
      |\det(D \varphi_\eta)^{-1} D \varphi_\eta \rot \widehat{\Psi}^{(i)}_\eta|^2
      \displaybreak[2]\\
    &\qquad \le C\, \eta^{-1} \int\limits_{Y\setminus \widehat\Sigma_Y^\eta}
      | \rot \widehat{\Psi}^{(i)}_\eta|^2 \to 0\,,
  \end{align*}
  where we used in the convergence that that $\widehat{\Psi}_\eta^{(i)}$
  satisfies \eqref{eq:CellProblem:Psi:curl}.  This completes the
  verification of \eqref{eq:CellProblem:Psi}. As already indicated,
  the calculations for $\Phi_\eta^{(i)}$ are analogous, the proof is
  complete.
\end{proof}

\section{Wire geometries}
\label{sec:wires}

In this section we analyze inclusions that model wires. We start from
a concrete set $\Sigma^\eta_Y$ that stands for a segment of a single
wire. Putting together these pieces to the set $\Sigma_\eta$, we
obtain $O(\eta^{-1})$ wires that are periodically distributed at a
distance $\eta$, they are parallel, all oriented in direction $e_1$.

Our result is that if the wires are not too thin, these inclusions are
indeed connecting in direction $e_1$ and not connecting in direction
$e_2$.  The situation becomes interesting when the wire (more
precisely, its generator $\Sigma^\eta_Y$ in the periodicity cell $Y$)
has a radius that tends to $0$ as $\eta\to 0$. We find the critical
scaling of the radius such that the wire remains connecting in
direction $e_1$. We also treat a situation in which the wire is
interrupted by thin gaps. We check that very thin gaps do not destroy
the connectedness property in direction $e_1$. This shows that,
indeed, our asymptotic connectedness concept is different from the
classical connectedness of sets.

We recall that asymptotically connecting in direction $i = 1$ is
defined by the existence of cell functions $\Psi^{(i)}_\eta$, see
Definition~\ref{def:Analytical-Connectedness}.  Being asymptotically
disconnected in direction $j = 2$ is declared by the existence of cell
functions $\Phi^{(j)}_\eta$, see
Definition~\ref{def:Analytical-DisConnectedness}.  Thus, we are
interested in constructing functions $\Phi_\eta^{(1)}$ and
$\Psi_\eta^{(2)}$; in this section we skip the superscript and write
only $\Phi_\eta$ and $\Psi_\eta$.  We prepare the results with
Section~\ref{ssec:DiffCalc2D}, where we construct $2$D auxiliary
functions. In Section~\ref{ssec:wires}, the $3$D cell functions are
constructed from their $2$D counterparts.

\subsection{Constructions in \texorpdfstring{$2$}{2}D}
\label{ssec:DiffCalc2D}
We consider inclusions (wires), that are not varying in the direction
$e_1$. In such a geometry, it is convenient to reduce the construction
of the cell functions to a $2$D problem. We use the following notation
in two space dimensions: For a vector $a = (a_1,a_2) \in \R^2$, we
define the rotated vector by $a^\perp \coloneqq (-a_2,a_1)$. The
vector $a^\perp$ is obtained by applying the rotation matrix: 
\begin{equation*}
  R \coloneqq \left(
    \begin{array}{cc}
      0 & -1\\
      1 & 0
    \end{array}
  \right)\,,\qquad a^\perp = R a\,.
\end{equation*}
When $U \subset \R^2$ is a Lipschitz domain and $n$ is the unit outer
normal, the left tangential vector is $t \coloneqq n^\perp$.  For a
function $u \colon \R^2 \to \R$, we define the rotated gradient by
$\nabla^\perp u \coloneqq (-\partial_{2} u, \partial_{1} u)$. For a vector
field $v \colon \R^2 \to \R^2$, we define the two-dimensional curl of
$v$ by
$\nabla^\perp \cdot v \coloneqq (-\partial_{2} v_1 + \partial_{1} v_2)
= -\div\left(R v\right)$. There holds
$\nabla^\perp \cdot \nabla^\perp u = \Delta u$.

The notion of a weak rotated gradient or weak rotated divergence is
defined in the distributional sense.

\textit{Connection to $3D$ calculus:} For a set $U \subset \R^2$, we
consider the corresponding cylindrical domain in three dimensions,
$\widehat{U} \coloneqq \{x \in \R^3 \mid x_1\in \R\,,\, (x_2, x_3) \in U\}$. When
$v \colon U \to \R^2$ is a two-dimensional vector field we define a
three-dimensional vector field $\hat{v} \colon \widehat U \to \R^3$ by
\begin{align}\label{eq:V=v}
  \hat{v}(x_1, x_2, x_3) = e_2\, v_1(x_2,x_3) + e_3\, v_2(x_2,x_3)\,.
\end{align}
The curl of the new function is
\begin{align*}
  \curl \hat{v} (x_1, x_2, x_3) = e_1\, (\nabla^\perp \cdot v)(x_2, x_3)\,.
\end{align*}

The above observations motivate the following construction: We start
from a harmonic potential $u$ in $2$D and construct a two-dimensional
field $v$ as the rotated gradient of $u$, i.e.~$v = \nabla^\perp u$.
Then, the two-dimensional curl of $v$ vanishes:
$\nabla^\perp\cdot v = \nabla^\perp\cdot \nabla^\perp u = \Delta u =
0$. In particular, for such a two-dimensional field $v$, the
three-dimensional curl of $\hat{v}$ vanishes, $\curl \hat{v} = 0$.
Following this idea, we will construct cell-functions as rotated
gradients.

\subsubsection*{Construction of the \texorpdfstring{$2$}{2}D cell
  function \texorpdfstring{$\psi$}{psi}}

Throughout this section, the two-dimensional geometry is given by a
periodicity cell $V \coloneqq (0,1)^2$, the independent variables are
$z = (z_1,z_2)\in V$, given is a point $z_0\in V$ and a radius $r>0$
such that $\overline{B_r(z_0)}\subset V$. In the following, we
suppress the dependence on $z_0$, but we indicate the dependence on
$r>0$. Later on, $r$ will be the radius of the wire, and the
asymptotics of the wire geometry will be relevant in our analysis.

The main part of our analysis regards the verification of the fact
that the wire is connected in direction $e_1$; this requires the
construction of a function $\Psi_\eta$. The three-dimensional function
$\Psi_\eta$ will be obtained from a two-dimensional function $\psi_r$
as sketched above. The two-dimensional function is constructed as
\begin{align}
  \label{eq:psi=rot}
  \psi_r(z) \coloneqq \begin{cases}
    \nabla^\perp \urs(z) &\text{for } z \in B_r(z_0) \,,
    \\
    \nabla^\perp \vrs(z) &\text{for } z \in V\setminus B_r(z_0) \eqqcolon V_r\,.
  \end{cases}
\end{align}
The potentials $\vrs$ and $\urs$ will be obtained as solutions of
Poisson problems.  We note that potentials may have jumps across
$\del B_r(z_0)$, but we have to make sure that the tangential
component of $\psi_r$ has no jumps. We therefore have to demand that
the normal derivatives of the potentials have no jump at
$\partial B_r(z_0)$. Additionally, $\urs$ must be $z_1$-periodic.  We
construct $\vrs$ in Lemma~\ref{lem:vr-psi-wires} and $\urs$ in
Lemma~\ref{lem:ur-psi-wires}.

\begin{lemma}[Potential $\vrs$]\label{lem:vr-psi-wires}
  Let $z_0 \in V = (0,1)^2$ be a point, $R>0$ be a radius such that
  $\overline{B_R(z_0)} \subset V$. We consider $0<r\leq R$ and
  $V_r = V \setminus B_r(z_0)$. There exists a unique solution
  $\vrs \in H^1(V_r)$, periodic in $e_1$-direction, of the problem
  \begin{subequations}\label{eq:CellProblem:VectorPotential:u}
    \begin{align}\label{eq:CellProblem:VectorPotential:v:Laplace}
      -\Delta \vrs &= 0 &&\textrm{in } V_r \,,
      \\\label{eq:CellProblem:VectorPotential:v:Boundary1}
      \partial_2 \vrs &= -1 &&\textrm{on } \{z_2 = 1\} \,, 
      \\\label{eq:CellProblem:VectorPotential:v:Boundary2}
      \partial_2 \vrs &= 0 &&\textrm{on }\{z_2 = 0\} \,,
      \\\label{eq:CellProblem:VectorPotential:v:Boundary3}
      \nabla \vrs \cdot n &= \frac{1}{|\partial B_r(z_0)|}
                        &&\textrm{on } \partial B_r(z_0) \,,
    \end{align}
    with the normalization
    \begin{align}\label{eq:CellProblem:VectorPotential:v:MeanValue}
      \int\limits_{\partial B_{R}(z_0)} \vrs &= 0 \, .
    \end{align}
  \end{subequations}
  Moreover, there exists a constant $C_1 = C_1(R) > 0$, independent of
  $r$, such that
  \begin{align}\label{eq:VectorPotential:v:estimate}
    \| \nabla \vrs \|^2_{L^2(V_r)} \leq C_1 | \ln (r) | \,.
  \end{align}
\end{lemma}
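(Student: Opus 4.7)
The plan is to treat \eqref{eq:CellProblem:VectorPotential:u} as a standard Neumann/periodic boundary value problem for the Laplacian on the perforated rectangle $V_r$, and to extract the $|\ln r|$ dependence by isolating the logarithmic singularity at $z_0$ with an explicit comparison function.

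First I would set up the weak formulation. Introduce the space $H^1_\#(V_r)$ of $H^1$-functions on $V_r$ that are $z_1$-periodic, and work on its quotient by constants (fixed later by \eqref{eq:CellProblem:VectorPotential:v:MeanValue}). The compatibility condition for the Neumann/periodic problem amounts to checking that the flux data on $\del V_r$ integrates to zero; a short computation gives $(-1)\cdot 1 + 0 + |\del B_r(z_0)|\cdot\tfrac{1}{|\del B_r(z_0)|}\cdot(\pm 1) = 0$ once the sign convention for the normal on $\del B_r(z_0)$ is fixed. Existence and uniqueness of $\vrs\in H^1(V_r)/\R$ then follow from Lax--Milgram and the Poincaré--Wirtinger inequality on $V_r$; the normalization \eqref{eq:CellProblem:VectorPotential:v:MeanValue} pins down the additive constant, and elliptic regularity away from $z_0$ justifies all trace integrals.

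For the estimate \eqref{eq:VectorPotential:v:estimate}, the key observation is that the Neumann datum on $\del B_r(z_0)$ forces a total flux of $1$ through a circle whose circumference tends to $0$, so the natural singular profile is $-\tfrac{1}{2\pi}\ln|z-z_0|$. I would choose a smooth cutoff $\chi\in C_c^\infty(B_R(z_0))$ with $\chi\equiv 1$ on $B_{R/2}(z_0)$ and set
\begin{align*}
  w(z) \coloneqq -\frac{1}{2\pi}\,\chi(z)\,\ln|z-z_0|\,.
\end{align*}
A direct calculation gives $\|\nabla w\|_{L^2(V_r)}^2 \leq C(R)\,|\ln r| + C(R)$, and on $\del B_r(z_0)$ one has $\nabla w\cdot n = \tfrac{1}{|\del B_r(z_0)|}$ for $r \leq R/2$, matching the boundary flux of $\vrs$. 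The difference $\tilde v \coloneqq \vrs - w$ then solves a Neumann problem on $V_r$ whose data (arising from $\Delta w$, which is compactly supported in the annulus $B_R\setminus B_{R/2}$ and independent of $r$, and the unchanged data on the other parts of $\del V_r$) are bounded uniformly in $r$. A standard energy estimate for $\tilde v$, using Poincaré--Wirtinger on $V_r$ combined with the normalization on $\del B_R(z_0)$, yields $\|\nabla \tilde v\|_{L^2(V_r)} \leq C(R)$ with a constant independent of $r\in (0,R]$.

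Combining with the triangle inequality,
\begin{align*}
  \|\nabla \vrs\|_{L^2(V_r)}^2
  \leq 2\|\nabla w\|_{L^2(V_r)}^2 + 2\|\nabla \tilde v\|_{L^2(V_r)}^2
  \leq C_1(R)\,|\ln r|\,,
\end{align*}
which is \eqref{eq:VectorPotential:v:estimate}. The main delicate point I expect is the $r$-uniformity of the Poincaré--Wirtinger constant on the perforated cell $V_r$ (needed to estimate $\tilde v$): here one uses that the normalization $\int_{\del B_R(z_0)} \vrs = 0$ is imposed on a \emph{fixed} circle away from the hole, so the relevant trace-Poincaré inequality holds with a constant depending only on $R$ and $V$, not on $r$.
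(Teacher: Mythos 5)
Your proposal is correct, but the route to \eqref{eq:VectorPotential:v:estimate} is genuinely different from the paper's. You subtract an explicit singular profile $w(z) = -\tfrac{1}{2\pi}\chi(z)\ln|z-z_0|$ built to match the Neumann flux on $\partial B_r(z_0)$, absorb all the $r$-dependence into $\|\nabla w\|_{L^2}^2 \lesssim |\ln r|$, and show the remainder $\tilde v$ solves a problem with $r$-uniform data. The paper instead reformulates \eqref{eq:CellProblem:VectorPotential:u} as a minimization problem, uses $u = 0$ as a competitor to obtain $E(\vrs) \le 0$, and then bounds the resulting inner boundary term $\tfrac{1}{|\partial B_r|}\int_{\partial B_r}\vrs$ directly by a clean Gauss-theorem-plus-fundamental-theorem-of-calculus computation, producing the explicit value $\tfrac{1}{2\pi}(\ln R - \ln r)$. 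Both approaches handle the outer boundary term the same way — by applying trace and Poincaré inequalities on the fixed domain $V_R$ with the normalization on $\partial B_R(z_0)$, exactly the point you flagged as delicate and resolved correctly. What your approach buys is a conceptually transparent separation of the singular behavior; what the paper's buys is a shorter, fully explicit bound for the dominant term without introducing a cutoff or verifying a second boundary value problem. Both are standard and sound; your proposal would serve as a complete proof once the energy estimate for $\tilde v$ is written out in full (you sketched it adequately but would need to record the weak formulation, note that $\Delta w$ is supported in $B_R\setminus B_{R/2}$ with $r$-uniform $L^\infty$-bound, and carry out the absorption via Young as you indicate).
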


\begin{proof}
  We note that \eqref{eq:CellProblem:VectorPotential:v:Laplace} is
  compatible with the boundary conditions \eqref
  {eq:CellProblem:VectorPotential:v:Boundary1}--\eqref
  {eq:CellProblem:VectorPotential:v:Boundary3}. We denote by $X_r$ the
  subset of $H^1(V_r)$ consisting of $e_1$-periodic functions
  satisfying \eqref{eq:CellProblem:VectorPotential:v:MeanValue}. We
  apply the theorem of Lax--Milgram and obtain the existence and
  uniqueness of a solution $\vrs \in X_r$ of
  \eqref{eq:CellProblem:VectorPotential:u}.
	
  To derive estimate \eqref{eq:VectorPotential:v:estimate}, we
  reformulate \eqref{eq:CellProblem:VectorPotential:u} as a
  minimization problem. Let the energy $E \colon X_r \to \R$ be given
  by
  \begin{align*}
    E(u) \coloneqq \frac{1}{2} \int\limits_{V_r}
    |\nabla u|^2 - \frac{1}{\partial B_r(z_0)}\int\limits_{\partial B_r(z_0)} u
    + \int\limits_{\{z_2 = 1 \}} u \, .
  \end{align*}
  We choose $u = 0 \in X_r$ as a competitor and obtain
  $E(\vrs) \leq E(0) = 0$. Hence, we can estimate
  \begin{align}\label{eq:EstE(vrs)}
    \frac{1}{2} \int\limits_{V_r} |\nabla \vrs|^2 \leq \frac{1}{\partial B_r(z_0)}\int\limits_{\partial B_r(z_0)} \vrs - \int\limits_{\{z_2 = 1 \}} \vrs \, .
  \end{align}
  To derive \eqref{eq:VectorPotential:v:estimate}, it therefore
  suffices to estimate the two integrals on the right-hand side of
  \eqref{eq:EstE(vrs)}.
	
  For radii $r \leq s \leq R$ and, we apply the theorem of Gauß on the
  domain $B_R(z_0) \setminus \overline{B_s(z_0)}$ and find
  \begin{align*}
    \int\limits_{\partial B_s (z_0)} n \cdot \nabla \vrs = -1 \,,
  \end{align*}
  where $n$ denotes the outer normal of $B_s(z_0)$.  We rewrite this
  equality as
  \begin{align*}
    -\frac{1}{2 \pi s}
    &= \frac{1}{|\partial B_s (z_0)|} \int\limits_{\partial B_s (z_0)} n \cdot \nabla \vrs
      = \frac{1}{|\partial B_1 (z_0)|} \int\limits_{\partial B_1 (0)}
      \zeta \cdot \nabla \vrs(z_0 + s \zeta) \dd S(\zeta) 
    \\
    &=\frac{1}{2 \pi}\partial_s \int\limits_{\partial B_1(0)} \vrs(z_0 + s \zeta) \dd S(\zeta) 
      =\partial_s \Bigg( \frac{1}{|\partial B_s(z_0)|}
      \int\limits_{\partial B_s(z_0)} \vrs \Bigg)\,.
  \end{align*}
  Because of \eqref{eq:CellProblem:VectorPotential:v:MeanValue}, we
  obtain from the fundamental theorem of calculus
  \begin{align}
    \begin{aligned}\label{eq:est:E:inner-Boundary}
      \frac{1}{|\partial B_r(z_0)|} \int\limits_{\partial B_r(z_0)}
      \vrs &=-\int\limits_r^R\partial_s \Bigg( \frac{1}{|\partial
        B_s(z_0)|}
      \int\limits_{\partial B_s(z_0)} \vrs \Bigg) \dd s \\
      &= \int\limits_r^R \frac{1}{2 \pi s} \dd s = \frac{1}{2\pi}
      (\ln(R) - \ln(r)) \,.
    \end{aligned}
  \end{align}
  This provides a bound for the second integral of the energy $E(u)$ for
  $u = \vrs$.
	
  The last integral of $E$ can be estimated with the trace theorem on
  $V_R$, with a Poincar\'e inequality and the Young inequality as
  \begin{align}\label{eq:est:E:outer-Boundary}
    -\int\limits_{\{z_2 = 1 \}} \vrs \leq C_T \|\vrs\|_{H^1(V_R)}
    \leq C_P \|\nabla \vrs\|_{L^2(V_R)} \leq C_Y + \frac{1}{4} \|\nabla \vrs\|_{L^2(V_R)}^2 \,.
  \end{align}
  Inserting \eqref{eq:est:E:inner-Boundary} and
  \eqref{eq:est:E:outer-Boundary} in \eqref{eq:EstE(vrs)} yields
  \begin{align*}
    \frac{1}{4} \|\nabla \vrs\|_{L^2(V_r)}^2
    \leq \frac{1}{2\pi} (\ln(R) - \ln(r)) + C_Y \leq 4 C_1  |\ln (r)| \,,
  \end{align*} 
  where the constant $C_1$ depends on $R$.  This shows
  \eqref{eq:VectorPotential:v:estimate}.
\end{proof}

The following lemma provides the scalar potential $\urs$ on $B_r(z_0)$.
\begin{lemma}[Potential $\urs$]\label{lem:ur-psi-wires}
  Let $r>0$ and $z_0 \in \R^2$.  There exists a solution
  $\urs \in H^1(B_r(z_0))$ of the problem
  \begin{subequations}\label{eq:CellProblem:VectorPotential:u-inside-wire}
    \begin{align}\label{eq:CellProblem:VectorPotential:u-inside-wire:1}
      -\Delta \urs &= \frac{1}{|B_r(z_0)|} &&\textrm{in } B_r(z_0) \,,
      \\\label{eq:CellProblem:VectorPotential:u-inside-wire:2}
      \nabla \urs \cdot n &= -\frac{1}{|\partial B_r(z_0)|} &&\textrm{on } \partial B_r(z_0) 
    \end{align}
  \end{subequations}
  and constants $C_2$ and $C_3$ such that
  \begin{align}\label{eq:VectorPotential:u:estimate}
    \|\nabla \urs \|_{L^2(B_r(z_0))}^2 = C_2 \,, \qquad \|\Delta \urs\|_{L^2(B_r(z_0))}^2 = C_3 r^{-2}\,.
  \end{align}
\end{lemma}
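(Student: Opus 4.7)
The plan is to solve the Neumann problem \eqref{eq:CellProblem:VectorPotential:u-inside-wire} explicitly by exploiting its radial symmetry, and then read off the two norm estimates by direct integration.

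First I would check the compatibility condition. Integrating \eqref{eq:CellProblem:VectorPotential:u-inside-wire:1} over $B_r(z_0)$ gives $\int_{B_r(z_0)} (-\Delta u_r^\psi) = 1$, while the divergence theorem and \eqref{eq:CellProblem:VectorPotential:u-inside-wire:2} give $-\int_{\partial B_r(z_0)} \nabla u_r^\psi \cdot n = 1$ as well, so the Neumann problem is well-posed and existence of a solution $u_r^\psi \in H^1(B_r(z_0))$, unique up to an additive constant, follows from Lax--Milgram in the quotient space $H^1(B_r(z_0))/\R$.

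Second, because the data is radially symmetric about $z_0$, I would simply exhibit a solution by ansatz. The explicit candidate is
\begin{equation*}
  u_r^\psi(z) \coloneqq -\frac{|z-z_0|^2}{4\pi r^2}\,,
\end{equation*}
which gives $-\Delta u_r^\psi = \tfrac{1}{\pi r^2} = \tfrac{1}{|B_r(z_0)|}$ and, on $\partial B_r(z_0)$ where $n = (z-z_0)/r$, yields $\nabla u_r^\psi \cdot n = -\tfrac{1}{2\pi r} = -\tfrac{1}{|\partial B_r(z_0)|}$. Thus this function solves \eqref{eq:CellProblem:VectorPotential:u-inside-wire}.

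Third, I would compute the two norms directly. Using polar coordinates centered at $z_0$:
\begin{align*}
  \|\nabla u_r^\psi\|_{L^2(B_r(z_0))}^2
  &= \int_{B_r(z_0)} \frac{|z-z_0|^2}{4\pi^2 r^4}\dd z
    = \frac{1}{4\pi^2 r^4}\int_0^r 2\pi s^3 \dd s = \frac{1}{8\pi}\,,
\end{align*}
which gives the claimed constant $C_2$. For the Laplacian,
\begin{equation*}
  \|\Delta u_r^\psi\|_{L^2(B_r(z_0))}^2
  = \int_{B_r(z_0)} \frac{1}{\pi^2 r^4}\dd z = \frac{1}{\pi r^2}\,,
\end{equation*}
which yields $C_3 = 1/\pi$. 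Since the PDE has a unique solution modulo constants, these values are independent of the choice of normalization, and the estimates \eqref{eq:VectorPotential:u:estimate} follow.

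There is essentially no hard step here: the key observation is radial symmetry, which reduces the problem to a quadratic ansatz. The only thing to watch is to use the explicit constants $|B_r(z_0)| = \pi r^2$ and $|\partial B_r(z_0)| = 2\pi r$ consistently when matching the Neumann datum.
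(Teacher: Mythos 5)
Your proof is correct and takes essentially the same approach as the paper: both exhibit the explicit radial solution $\urs(z) = -|z-z_0|^2/(4\pi r^2)$ and compute $C_2 = (8\pi)^{-1}$, $C_3 = \pi^{-1}$, the only cosmetic difference being that the paper obtains the constants by rescaling a fixed profile on the unit ball while you integrate the scaled function directly.
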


\begin{proof}
  We use $v_1(z_1,z_2) \coloneqq -(4 \pi)^{-1} (z_1^2 + z_2^2)$ and
  $\urs(z) \coloneqq v_1((z-z_0)/r)$ for $r >0$.  An explicit
  calculation shows that $-\Delta v_1 = \pi^{-1}$ in $B_1(0)$ and
  $\nabla v_1 \cdot n = -(2\pi)^{-1}$ on $\partial B_1(0)$. With the
  chain rule, we obtain $-\Delta \urs =\pi^{-1} r^{-2}$ in $B_r(0)$
  and $\nabla \urs \cdot n = -(2\pi)^{-1} r^{-1}$ on
  $\partial B_r(0)$. Thus, $\urs$ solves
  \eqref{eq:CellProblem:VectorPotential:u-inside-wire}. In the same
  manner, the scaling by $r$ leads to
  \eqref{eq:VectorPotential:u:estimate}, an explicit computation shows
  $C_2 = (8 \pi)^{-1}$ and $C_3 = \pi^{-1}$.
\end{proof}

We combine the $2$D potentials $\vrs$ and $\urs$ to define the $2$D
cell functions $\psi_r$ via \eqref{eq:psi=rot}. Let us calculate the
norms of the relevant quantities.

\begin{lemma}[$2$D vector field $\psi_r$]
  \label{lem:psi-wires}
  Let $z_0 \in V = (0,1)^2$ be a point, $R>0$ be a radius such that
  $\overline{B_R(z_0)} \subset V$. We consider $0<r< R$ and
  $V_r = V \setminus B_r(z_0)$. There exists $\psi_r \in L^2(V, \R^2)$
  with $\nabla^\perp \cdot\psi_r \in L^2(V, \R^2)$, periodic in the
  sense that its $e_1$-periodic extension $\widetilde\psi_r$ satisfies
  $\nabla^\perp\cdot \widetilde\psi_r \in L^2_\loc(\R\times (0,1))$, with
  the upper and lower boundary conditions
  \begin{equation}
    \label{eq:boundary-cond-psi-r}
    \psi_r|_{\{z_2 = 0\}} \cdot e_1 = 0 \,,\qquad \qquad 
    \psi_r|_{\{z_2 = 1\}} \cdot e_1 = 1\,.
  \end{equation}
  Furthermore, for constants $C_1, C_2, C_3$:
  \begin{align}
    \label{eq:est:psi_r}
    \|\psi_r\|_{L^2(V_r)}^2 &\leq C_1 |\ln(r)| \,,
    &
      \|\psi_r\|_{L^2(B_r(z_0))}^2 &= C_2  \,,
    \\\label{eq:est:psi_r:curl}
    \|\nabla^\perp \cdot \psi_r\|_{L^2(V_r)}^2 &= 0 \,,
    &
      \|\nabla^\perp \cdot \psi_r\|_{L^2(B_r(z_0))}^2 &= C_3 r^{-2}  \,.
  \end{align}
  The constants $C_1, C_2, C_3$ depend on $R$, but are independent of
  $r$, they are the constants of Lemmas~\ref{lem:vr-psi-wires} and
  \ref{lem:ur-psi-wires}.
\end{lemma}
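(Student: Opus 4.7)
The plan is to take $\psi_r$ defined by \eqref{eq:psi=rot}, i.e., $\psi_r = \nabla^\perp u_r^\psi$ inside the disc and $\psi_r = \nabla^\perp v_r^\psi$ outside, and to read off all the claimed properties from Lemmas \ref{lem:vr-psi-wires} and \ref{lem:ur-psi-wires}. The only nontrivial point is that the pointwise $2$D curl $\nabla^\perp \cdot \psi_r$, which is defined piecewise, actually represents the distributional $2$D curl, i.e., no surface contribution appears along $\partial B_r(z_0)$ or along the periodic boundary. For this, recall the identity $(\nabla^\perp u)\cdot t = \nabla u \cdot n$, so that the tangential trace of $\nabla^\perp u$ on a Lipschitz curve coincides with the normal derivative of $u$.

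First I would check the matching condition on $\partial B_r(z_0)$. If $n_B$ denotes the unit outer normal of $B_r(z_0)$, then the Neumann data for $u_r^\psi$ in \eqref{eq:CellProblem:VectorPotential:u-inside-wire:2} gives $\nabla u_r^\psi \cdot n_B = -1/|\partial B_r(z_0)|$, while \eqref{eq:CellProblem:VectorPotential:v:Boundary3} (where $n$ is the outer normal of $V_r$, hence $-n_B$) yields $\nabla v_r^\psi \cdot n_B = -1/|\partial B_r(z_0)|$. The two normal derivatives agree, hence the tangential traces of $\nabla^\perp u_r^\psi$ and $\nabla^\perp v_r^\psi$ agree, so $\psi_r$ has no surface contribution in its $2$D curl across $\partial B_r(z_0)$. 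Inside $B_r(z_0)$, the distributional $2$D curl equals $\nabla^\perp\cdot \nabla^\perp u_r^\psi = \Delta u_r^\psi = -1/|B_r(z_0)|$; in $V_r$ it equals $\Delta v_r^\psi = 0$. This gives both identities of \eqref{eq:est:psi_r:curl}, with $C_3$ coming directly from \eqref{eq:VectorPotential:u:estimate}.

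Second, the $L^2$-bounds \eqref{eq:est:psi_r} follow immediately from $|\nabla^\perp u| = |\nabla u|$ pointwise: in $V_r$ from \eqref{eq:VectorPotential:v:estimate}, and in $B_r(z_0)$ from the first equality in \eqref{eq:VectorPotential:u:estimate}. The boundary conditions \eqref{eq:boundary-cond-psi-r} are obtained by computing $\psi_r\cdot e_1 = -\partial_2 v_r^\psi$ on $\{z_2=0\}$ and $\{z_2=1\}$ and inserting \eqref{eq:CellProblem:VectorPotential:v:Boundary1}--\eqref{eq:CellProblem:VectorPotential:v:Boundary2}.

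Finally, for the periodicity statement, the function $v_r^\psi$ is $e_1$-periodic by Lemma \ref{lem:vr-psi-wires}, so $\nabla^\perp v_r^\psi$ is $e_1$-periodic and thus admits an $e_1$-periodic extension $\widetilde \psi_r$ to $\R\times(0,1)$. The tangential trace of $\widetilde\psi_r$ on the interface $\{z_1\in\Z\}$ is the normal derivative $\partial_1 v_r^\psi$ from either side; since $v_r^\psi$ is periodic, these derivatives coincide, so again no singular contribution in the $2$D curl appears along the periodic boundary, giving $\nabla^\perp\cdot \widetilde\psi_r\in L^2_\loc(\R\times(0,1))$. I expect the main delicate point to be the careful book-keeping of normal orientations at $\partial B_r(z_0)$, ensuring the matching of tangential traces; once this is in place, every other claim is a direct consequence of the two preceding lemmas.
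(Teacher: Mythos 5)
Your proof is correct and takes essentially the same route as the paper's: define $\psi_r$ piecewise from the rotated gradients of $\urs$ and $\vrs$, observe that the matching of Neumann data at $\partial B_r(z_0)$ implies continuity of the tangential trace (hence no singular part in the distributional two-dimensional curl), and read off the $L^2$ and curl estimates, boundary conditions, and periodicity from Lemmas~\ref{lem:vr-psi-wires} and~\ref{lem:ur-psi-wires}. You are slightly more explicit than the paper in spelling out the identity $(\nabla^\perp u)\cdot t = \nabla u\cdot n$ and in tracking the orientation of the normals, but the argument is the same.
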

  
\begin{proof}
  Let $\vrs$ and $\urs$ be given by Lemma~\ref{lem:vr-psi-wires} and
  Lemma~\ref{lem:ur-psi-wires}, respectively. We define
  $\psi_r \coloneqq \one{B_r(z_0)}\nabla^\perp \urs +
  \one{V_r}\nabla^\perp \vrs$.

  The first estimate of \eqref{eq:est:psi_r} is a direct consequence
  of the estimate \eqref{eq:VectorPotential:v:estimate} for
  $\nabla \vrs$, see Lemma~\ref{lem:vr-psi-wires}. The second estimate
  of \eqref{eq:est:psi_r} is a direct consequence of
  \eqref{eq:VectorPotential:u:estimate} in Lemma
  \ref{lem:ur-psi-wires}.  Since $\Delta \vrs = 0$, we get the first
  equality in \eqref{eq:est:psi_r:curl}. The second
  equality follows from
  $\|\Delta \urs\|_{L^2(B_r(z_0))}^2 = C_3 r^{-2}$ of \eqref
  {eq:est:psi_r:curl}.

  The boundary conditions \eqref{eq:boundary-cond-psi-r} follow from
  \eqref{eq:CellProblem:VectorPotential:v:Boundary1} and
  \eqref{eq:CellProblem:VectorPotential:v:Boundary2}.

  Regarding $e_1$-periodicity of $\psi_r$, we recall that $\vrs$ is
  the periodic solution of a Poisson problem. This does not only imply
  $\vrs(1,z_2) = \vrs(0,z_2)$ for every $z_2\in (0,1)$, but also
  $\del_1 \vrs(1,z_2) = \del_1 \vrs(0,z_2)$ for every $z_2\in (0,1)$,
  both conditions are understood in the sense of traces.  This shows
  that $\psi_r$ is $e_1$-periodic in the sense as described in the
  lemma.
  
  We finally note that, by \eqref
  {eq:CellProblem:VectorPotential:v:Boundary3} and \eqref
  {eq:CellProblem:VectorPotential:u-inside-wire:2} the normal
  components of $\nabla \vrs$ and $ \nabla \urs$ coincide at
  $\partial B_r(z_0)$. This implies that tangential components of
  $\psi_r$ do not jump at $\partial B_r(z_0)$, hence,
  $\nabla^\perp \cdot\psi_r \in L^2(V, \R^2)$.
\end{proof}

\subsubsection*{Construction of the \texorpdfstring{$2$}{2}D cell
  function \texorpdfstring{$\phi$}{phi}}

The construction of $\phi$ in two dimensions is much simpler than the
construction of $\psi_r$, since it is independent of the radius. In
the subsequent lemma, the set $U \subset (0,1)^2$ is contained in the
complement of the cross section of the conductor, one may think of
$U = V \setminus \overline{B_R(z_0)}$.

\begin{lemma}[$2$D vector field $\phi$]\label{lem:phi-wires}
  Let $U \subset V \coloneqq (0,1)^2$ be a connected Lipschitz domain
  such that $\partial U \supset (0,1) \times \{0,1\}$. There exists
  $\phi \in L^2(V)$ with $\nabla^\perp \cdot\phi \in L^2(V)$,
  $e_1$-periodic in the sense that its $e_1$-periodic extension
  $\widetilde\phi$ satisfies
  $\nabla^\perp\cdot \widetilde\phi \in L^2_\loc(\R\times (0,1))$, satisfying
  \begin{align}\label{eq:boundary-cond-phi:oben-unten}	
    \phi|_{\{z_2 = 0\}} \cdot e_1 &= 1\,,\qquad \qquad   \phi|_{\{z_2 = 1\}} \cdot e_1 = 1\,,\\
	\label{eq:phi:innen}
    \phi|_{\{V \setminus U\}} &= 0\,,\\
    \label{eq:phi:curl}	
    \nabla^\perp \cdot \phi  &= 0 \quad \text{ in } V\,.
  \end{align}
\end{lemma}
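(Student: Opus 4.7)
The plan is to seek $\phi$ in the form $\phi = \nabla^\perp u$ on $U$ and $\phi = 0$ on $V\setminus U$, where $u$ is a scalar potential on $U$ solving an appropriate Neumann problem. This ansatz automatically makes $\phi$ curl-free inside $U$ (since $\nabla^\perp\cdot\nabla^\perp u = \Delta u$, provided $u$ is harmonic), and the requirement $\nabla^\perp\cdot\phi = 0$ on all of $V$ then splits into: (i) $\Delta u = 0$ in $U$; and (ii) the tangential component of $\phi$ must have no jump across $\partial U\cap V$. Since $\phi=0$ outside $U$, condition (ii) forces $\nabla^\perp u \cdot t = 0$ on $\partial U\cap V$, which equals $\partial_n u = 0$ there. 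Meanwhile, the boundary conditions \eqref{eq:boundary-cond-phi:oben-unten} translate into $-\partial_2 u = 1$ on $\{z_2=0,1\}$, i.e.\ a constant inhomogeneous Neumann datum on the top and bottom.

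With this in hand, I would solve the following problem on $U$: find an $e_1$-periodic $u\in H^1(U)$ with
\begin{align*}
  -\Delta u &= 0 &&\text{in } U\,,\\
  \partial_n u &= -1 &&\text{on } (0,1)\times\{1\}\,,\\
  \partial_n u &= 1 &&\text{on } (0,1)\times\{0\}\,,\\
  \partial_n u &= 0 &&\text{on the remaining (lateral) part of } \partial U\,.
\end{align*}
The compatibility condition for this Neumann problem is $\int_{\partial U}\partial_n u = -1 + 1 + 0 = 0$, so it is satisfied. By Lax--Milgram, applied to the bilinear form $(u,v)\mapsto\int_U\nabla u\cdot\nabla v$ on the space of $e_1$-periodic $H^1(U)$-functions modulo constants (using a Poincar\'e inequality for this space, available since $U$ is a connected Lipschitz domain), there exists a unique solution $u$.

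Finally, I would define $\phi\coloneqq\nabla^\perp u$ on $U$, $\phi\coloneqq 0$ on $V\setminus U$, and read off all required properties: $\phi\in L^2(V)$ by $u\in H^1(U)$; \eqref{eq:phi:innen} by definition; \eqref{eq:boundary-cond-phi:oben-unten} from the inhomogeneous Neumann data; the $e_1$-periodicity of $\phi$ in the sense stated, by the $e_1$-periodicity of $u$; and \eqref{eq:phi:curl} from $\Delta u = 0$ inside $U$ together with the absence of a tangential jump at $\partial U\cap V$, which is precisely the content of the homogeneous Neumann condition on the lateral boundary.

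The only nontrivial point is the last one: ensuring that the distributional 2D curl does not pick up a singular contribution supported on the internal interface $\partial U\cap V$. This is handled by testing $\int_V\phi\cdot\nabla^\perp\zeta$ against $\zeta\in C_c^\infty(V)$, integrating by parts on $U$, and observing that the only surviving boundary term is $-\int_{\partial U\cap V}(\partial_n u)\,\zeta$, which vanishes by construction. Compared to the construction of $\psi_r$ in Lemma~\ref{lem:psi-wires}, no $r$-dependent matching across $\partial B_r(z_0)$ is needed, which is why no radius-dependent constants enter.
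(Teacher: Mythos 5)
Your proposal matches the paper's proof essentially line for line: the same ansatz $\phi=\one{U}\nabla^\perp u$, the same Neumann problem on $U$ (with $e_2\cdot\nabla u=-1$ on top and bottom, homogeneous Neumann on $\partial U\setminus\partial V$), Lax--Milgram on the $e_1$-periodic space modulo constants, and the observation that the homogeneous lateral Neumann condition precisely cancels any singular contribution to $\nabla^\perp\cdot\phi$ along $\partial U\cap V$. Your extra remarks on the Neumann compatibility condition and the distributional test against $\zeta\in C_c^\infty(V)$ make explicit steps that the paper leaves implicit, but the argument is the same.
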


\begin{proof}
  We consider the system
  \begin{subequations}\label{eq:CellProblem:VectorPotential:u-phi}
    \begin{align}\label{eq:CellProblem:VectorPotential:u-phi:Laplace}
      -\Delta u^\phi &= 0 &&\textrm{in } U \,,
      \\\label{eq:CellProblem:VectorPotential:u-phi:Boundary1}
      e_2\cdot\nabla u^\phi &= -1 &&\textrm{on } \{z_2 = 0\}\cup \{z_2 = 1\} \,,
      \\\label{eq:CellProblem:VectorPotential:u-phi:Boundary2}
      n\cdot \nabla u^\phi &= 0 &&\textrm{on } \partial U \setminus \partial V \,,
    \end{align}
  \end{subequations}
  with periodicity conditions in $e_1$-direction.  The Lemma of
  Lax--Milgram provides a unique (up to additive constants) solution
  $u^\phi \in H^1(U)$ of \eqref{eq:CellProblem:VectorPotential:u-phi}.

  We define $\phi = \one{U}\nabla^\perp u^\phi$ on $V$.  There holds
  $\nabla^\perp \cdot \phi =\one{U} \Delta u^\phi = 0$ in $U$. Due to
  \eqref{eq:CellProblem:VectorPotential:u-phi:Boundary2}, the
  tangential component of $\phi$ vanishes at
  $\partial U \setminus \partial V$. This implies \eqref {eq:phi:curl}
  in all of $V$.  Condition \eqref{eq:boundary-cond-phi:oben-unten} on
  the boundary values follows from
  \eqref{eq:CellProblem:VectorPotential:u-phi:Boundary1}. The $e_1$
  periodicity of $u^\phi$ is inherited by $\phi$.
\end{proof}

\subsection{Construction of cell functions for
  \texorpdfstring{$3$}{3}D wires}\label{ssec:wires}

In this section, we use the $2$D functions of the last subsection to
construct the $3$D cell functions $\Phi^{(j)}_\eta$ and
$\Psi^{(i)}_\eta$ for wires. These cell functions show that the wire
is asymptotically connecting in the direction $e_i$ and asymptotically
disconnected in direction $e_j$ for $j = 3-i$.

To simplify notation, we consider only $i=1$. Wires are defined with a
radius $r_\eta$ and a center $z_0 \in V= (0,1)^2$ in the two
dimensional cross section.  With the two-dimensional ball
$B_{r_\eta}(z_0) \subset V$, we define the wire as
\begin{equation}
  \label{def:T_r}
  T_{r_\eta} \coloneqq (0,1) \times B_{r_\eta}(z_0)\,.
\end{equation}
The asymptotic connectedness of the wire $T_{r_\eta}$ will depend on
the asymptotic behavior of the radius $r_\eta$.

Additionally, we want to study the effect of gaps in the wire. Let
$I_\eta \subset(0,1)$ be an open set of finitely many intervals
representing gaps in the wires. We define
\begin{align}\label{def:T_I,r}
  &T_{r_\eta,I_\eta } \coloneqq((0,1)\setminus I_\eta) \times
    B_{r_\eta}(z_0)\,.
\end{align}
Note that $T_{r_\eta,I_\eta }$ is also defined for
$I_\eta = \emptyset$; in this case, no gaps are studied,
$T_{r_\eta} = T_{r_\eta, \emptyset}$.  The geometries are visualized
in Figure~\ref{fig:WireGeoemetries}.

\begin{figure}[ht]
  \centering
  \begin{subfigure}{.48\textwidth}
    \centering
    \includegraphics[width=0.98\linewidth]{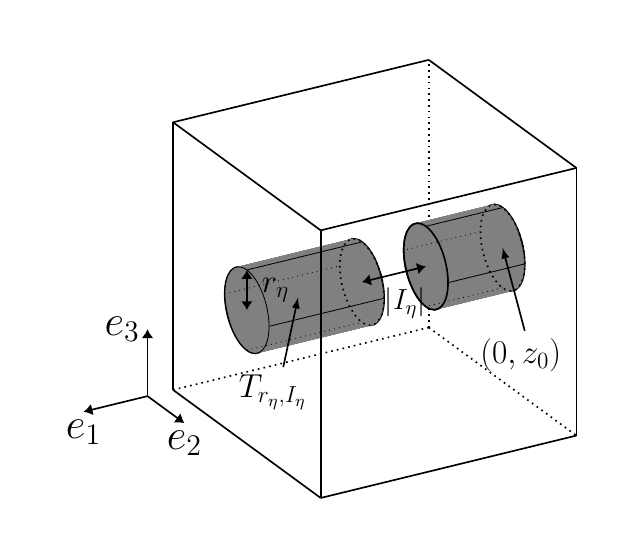} 
    \subcaption{The set $T_{r_\eta, I_\eta}$ in the
      reference cell $Y$}
    \label{fig:Sigma-Slit-756}
  \end{subfigure}
  \hspace{\fill}
  \begin{subfigure}{.48\textwidth}
    \centering
    \includegraphics[width=0.98\columnwidth]{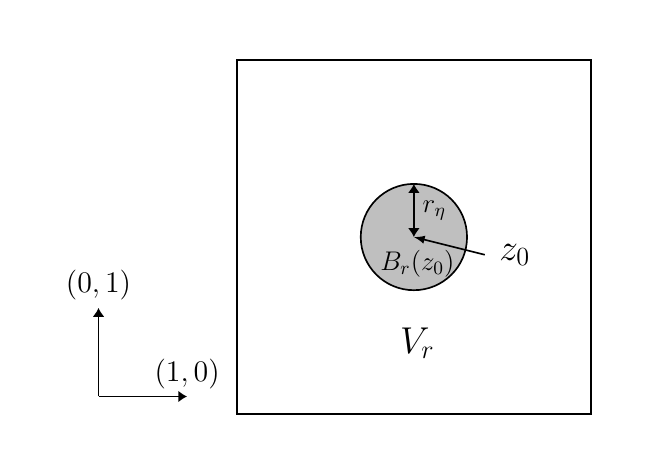}
    \subcaption{The cross section of $T_{r_\eta, I_\eta}$ in
      $(0,1)^2$} \label{fig:Z_eta}
  \end{subfigure}
  \caption{Geometries for the construction of $\phi_\eta$. The vector
    $(1,0) \in \R^2$ in the $2$D picture corresponds with
    $e_2 = (0,1,0)$ in the $3$D picture and $(0,1)\in \R^2$ with
    $e_3 = (0,0,1)\in \R^3$. When performing calculations in the $2$D
    context, we also write $e_1$ for $(1,0) \in \R^2$ and $e_2$ for
    $(0,1)\in \R^2$.}
  \label{fig:WireGeoemetries}
\end{figure}

\smallskip For the wires $T_{r_\eta,I_\eta }$ of \eqref{def:T_I,r} we
obtain the following results:
\begin{itemize}
\item Proposition~\ref{prop:construction:Psi-wires-perfect}: If
  $\eta |\ln(r_\eta)| \to 0$ and
  $\eta^{-1} r_\eta^{-2} |I_\eta| \to 0$, then $T_{r_\eta, I_\eta}$ is
  asymptotically connecting in the direction $e_1$.
\item Proposition~\ref{prop:construction:Phi-wires-ortho}: Wires are
  asymptotically disconnected in direction $e_2$.
\item Proposition~\ref{prop:construction:Phi-wires-thin}: If
  $\eta |\ln(r_\eta)| \to \infty$, then
  $T_{r_\eta}= T_{r_\eta, \emptyset}$ is asymptotically disconnected
  in the direction $e_1$.
\end{itemize}

In the following, we construct the sequences of $3$D cell functions
from their $2$D counterparts, exploiting that, apart from the gaps,
the geometry is independent of $x_1$.  Starting from
$\psi_{r_\eta} \colon V = (0,1)^2 \to \R^2$ given by
Lemma~\ref{lem:psi-wires}, we define $\Psi_\eta$ as
\begin{align}
  \label{eq:Con:Psi-psi}
  &\Psi_\eta(x_1,x_2,x_3) \coloneqq
        \left(\begin{array}{c}
            0 \\
            \psi_{r_\eta, 1}\\
            \psi_{r_\eta, 2}
          \end{array}
  \right) (x_2,x_3)\, 
  \one{\{0< x_3<1\}}
  + e_2\, \one{\{x_3>1\}} \,.
\end{align}
The properties of $\psi_{r_\eta}$ on $\{z_2 = 1\}$ and $\{z_2 = 0\}$
imply that the tangential components of the $3$D functions have no
jumps.  The $\curl$ is therefore given by
\begin{align}
  \label{eq:curl-Psi=div-psi}
  &\curl \Psi_\eta(x_1,x_2,x_3)
    =  \left(\nabla^\perp \cdot \psi_{r}\right)(x_2,x_3)\, e_1\, \one{\{0< x_3<1\}}\,.
\end{align}

\begin{proposition}[Cell function $\Psi_\eta$ and connectedness of
  wires in direction $e_1$]
  \label{prop:construction:Psi-wires-perfect}
  Let $z_0 \in V = (0,1)^2$ be a point, $R>0$ be a radius such that
  $\overline{B_R(z_0)} \subset V$. For a sequence of radii $0 < r_\eta \leq R$
  and a sequence of open sets $I_\eta \subset (0,1)$, we consider
  $T_{r_\eta, I_\eta}$ of \eqref{def:T_I,r} and assume
  \begin{equation}
    \label{eq:r-asymptotics}
    \eta |\ln(r_\eta)| \to 0 \qquad \textrm{ and }
    \qquad \eta^{-1} r_\eta^{-2} |I_\eta| \to 0 \, .
  \end{equation} 
  Then, the inclusion $\Sigma_Y^\eta = T_{r_\eta, I_\eta}$ is
  asymptotically connecting in direction $e_1$, i.e.~there exists a
  sequence $\Psi_\eta \in \Hd(\curl, Z)$ satisfying
  \eqref{eq:CellProblem:Psi} for $j = 2$.
\end{proposition}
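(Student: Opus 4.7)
The plan is to use the three-dimensional extrusion formula \eqref{eq:Con:Psi-psi} to lift the two-dimensional vector field $\psi_{r_\eta}$ from Lemma~\ref{lem:psi-wires} into a candidate cell function $\Psi_\eta$, and then to verify the two conditions in \eqref{eq:CellProblem:Psi} by means of the estimates \eqref{eq:est:psi_r}--\eqref{eq:est:psi_r:curl} together with the assumption~\eqref{eq:r-asymptotics}. Observe that no slit-dependent modification of the construction is required: the gaps $I_\eta$ only change the set $\Sigma_Y^\eta$, whereas $\Psi_\eta$ lives on all of $Z$ and only the domain $Z^\eta$ of integration is affected.

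First I would check that $\Psi_\eta\in\Hd(\rot,Z)$. The $e_1$- and $e_2$-periodicity is inherited from $\psi_{r_\eta}$ (the $e_1$-periodicity was stated in Lemma~\ref{lem:psi-wires}; the $e_2$-periodicity comes from the geometry of $V_{r_\eta}$). The boundary conditions $\psi_{r_\eta}|_{\{z_2=0\}}\cdot e_1 = 0$ and $\psi_{r_\eta}|_{\{z_2=1\}}\cdot e_1 = 1$ of Lemma~\ref{lem:psi-wires} guarantee tangential continuity across $\{x_3=0\}$ and $\{x_3=1\}$, so that the distributional curl of $\Psi_\eta$ does not pick up surface measures and is given by the pointwise formula~\eqref{eq:curl-Psi=div-psi}.

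Next I would verify the $L^2$-condition \eqref{eq:CellProblem:Psi:L2}. Since $\Psi_\eta-e_2\one{\{x_3>0\}}$ vanishes outside $(0,1)^2\times(0,1)$, and since $Z^\eta\cap\{0<x_3<1\}$ splits as $(0,1)\times V_{r_\eta}$ together with $I_\eta\times B_{r_\eta}(z_0)$ (the slit region, where the obstacle is absent), Fubini reduces everything to two-dimensional integrals of $\psi_{r_\eta}-e_1$. The first piece is bounded by $\|\psi_{r_\eta}\|_{L^2(V_{r_\eta})}^2+|V_{r_\eta}|\le C_1|\ln r_\eta|+C$ via \eqref{eq:est:psi_r}; the second piece is bounded by $|I_\eta|(C_2+\pi r_\eta^2)$. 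After multiplication by $\eta$, the first term vanishes because $\eta|\ln r_\eta|\to 0$, and the second vanishes because $|I_\eta|\to 0$ (which follows a fortiori from the second assumption in~\eqref{eq:r-asymptotics}).

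Finally I would verify the curl condition \eqref{eq:CellProblem:Psi:curl}. By \eqref{eq:curl-Psi=div-psi}, $\curl\Psi_\eta$ is a scalar multiple of $e_1$ and equals $(\nabla^\perp\cdot\psi_{r_\eta})(x_2,x_3)$ on $(0,1)^3$. By \eqref{eq:est:psi_r:curl}, this quantity vanishes outside the two-dimensional ball $B_{r_\eta}(z_0)$ and has $L^2(B_{r_\eta}(z_0))$-norm squared equal to $C_3 r_\eta^{-2}$. On $Z^\eta$, the cylinder $(0,1)\times B_{r_\eta}(z_0)$ is cut down to $I_\eta\times B_{r_\eta}(z_0)$, so the contribution to $\|\curl\Psi_\eta\|_{L^2(Z^\eta)}^2$ equals $|I_\eta|\,C_3 r_\eta^{-2}$. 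Multiplying by $\eta^{-1}$ and invoking the second part of~\eqref{eq:r-asymptotics} yields the required convergence. The main obstacle in this proof is conceptual rather than technical: one must recognize that the contribution to the curl lives precisely on the excised cross-section of the wire, so that inserting slits is the only way to create a non-zero curl contribution in $Z^\eta$, and that the critical scaling of $|I_\eta|$ is therefore exactly $\eta r_\eta^2$.
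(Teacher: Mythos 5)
Your proof is correct and follows essentially the same route as the paper: you take the lift \eqref{eq:Con:Psi-psi} of $\psi_{r_\eta}$, decompose $Z^\eta\cap\{0<x_3<1\}$ into $(0,1)\times V_{r_\eta}$ and $I_\eta\times B_{r_\eta}(z_0)$, and feed the estimates \eqref{eq:est:psi_r}--\eqref{eq:est:psi_r:curl} into the hypotheses \eqref{eq:r-asymptotics}, which is exactly the paper's argument. One minor slip in phrasing: the $e_1$-periodicity of the $2$D field $\psi_{r_\eta}$ from Lemma~\ref{lem:psi-wires} translates into $e_2$-periodicity of $\Psi_\eta$ (the $2$D variable $z_1$ corresponds to $x_2$), while $e_1$-periodicity of $\Psi_\eta$ comes from its independence of $x_1$ -- you have these two labels swapped, though the conclusion $\Psi_\eta\in\Hd(\curl,Z)$ is of course unaffected.
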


\begin{proof}
  Let $\psi_{r_\eta} \in H_\#(\nabla^\perp \cdot, V)$ be given by
  Lemma \ref{lem:psi-wires} and $\Psi_\eta$ by \eqref
  {eq:Con:Psi-psi}.  Since $\psi_{r_\eta}$ is periodic in the first
  argument, $\Psi_\eta$ is $e_2$-periodic (since $\Psi_\eta$ is
  independent of $x_1$, it is also $e_1$-periodic).  The curl of
  $\Psi_\eta$ is essentially given by
  $\nabla^\perp \cdot \psi_{r_\eta}$, see \eqref
  {eq:curl-Psi=div-psi}.  This will allow to derive estimates
  \eqref{eq:CellProblem:Psi:L2} and \eqref{eq:CellProblem:Psi:curl}.

  The estimate \eqref{eq:CellProblem:Psi:L2} compares the cell
  function $\Psi_\eta$ with the characteristic function
  $e_2 \one{\{x_3 > 0\}}$ on the set $Z^\eta$. Since $\Psi_\eta$ is
  identical to $e_2$ for $x_3 > 1$ and vanishes for $x_3 < 0$, the
  only contributions to the $L^2(Z^\eta)$-norm come from
  $Y \setminus T_{r_\eta, I_\eta}$, which we decompose into the two
  sets $Y \setminus T_{r_\eta}$ and
  $T_{r_\eta} \setminus T_{r_\eta, I_\eta} = I_\eta \times
  B_{r_\eta}(z_0)$. With a triangle inequality and Young's inequality,
  we find
  \begin{align*}
    \eta\|\Psi_\eta - e_2 \one{\{x_3>0\}}\|_{L^2(Z^\eta)}^2
    &= \eta \| \Psi_\eta - e_2 \|_{L^2(Y \setminus T_{r_\eta})}^2
      + \eta \|\Psi_\eta - e_2 \|_{L^2(I_\eta \times  B_{r_\eta}(z_0))}^2 
    \\
    &\le 2\eta \|\psi_{r_\eta} \|_{L^2(V_{r_\eta})}^2 
      + 2\eta  |I_\eta| \|\psi_{r_\eta}\|_{L^2( B_{r_\eta}(z_0))}^2
      + 2 \eta
    \\
    &\leq 2\eta\, C_1 |\ln(r_\eta)| + 2\eta |I_\eta| C_2  + 2 \eta
      \to 0 \,,
  \end{align*}
  where we used \eqref{eq:est:psi_r} of Lemma \ref{lem:psi-wires} and
  the asymptotics \eqref{eq:r-asymptotics} in the last line.  This
  shows that $\Psi_\eta$ satisfies \eqref{eq:CellProblem:Psi:L2}.

  To show \eqref{eq:CellProblem:Psi:curl}, we proceed
  similarly and calculate
  \begin{align*}
    \eta^{-1}\|\curl \Psi_\eta \|_{L^2(Z^\eta)}^2 
    &= \eta^{-1} \|\curl \Psi_\eta \|_{L^2(Y \setminus T_{r_\eta})}^2 
      + \eta^{-1} \|\curl \Psi_\eta\|_{L^2(I_\eta \times  B_{r_\eta}(z_0))}^2
    \\
    &= \eta^{-1} \|\nabla^\perp \cdot \psi_{r_\eta} \|_{L^2(V_{r_\eta})}^2
      + \eta^{-1} |I_\eta| \|\nabla^\perp \cdot \psi_{r_\eta}\|_{L^2(B_{r_\eta}(z_0))}^2
    \\
    &\leq \eta^{-1} |I_\eta| C_3 r_\eta^{-2} \to 0\,,
  \end{align*}
  where we used estimate \eqref{eq:est:psi_r:curl} of Lemma
  \ref{lem:psi-wires} and the asymptotics \eqref{eq:r-asymptotics} in
  the last line. This shows that $\Psi_\eta$ satisfies
  \eqref{eq:CellProblem:Psi:curl}.
\end{proof}

The cell function $\Phi_\eta$ is constructed similar as $\Psi_\eta$ in
\eqref{eq:Con:Psi-psi}.  For $\Phi_\eta$, the estimates are actually
much simpler than for $\Psi_\eta$, since we can choose
$\Phi_\eta = \Phi$ independent of $\eta$.  Starting from a
vector-field on $\phi \colon U\to \C^2$ on $U\subset V = (0,1)^2$, we
define
\begin{align}
  \label{eq:Con:Phi-phi}
  &\Phi_\eta(x_1,x_2,x_3) \coloneqq \Phi(x_1,x_2,x_3) =
        \left(\begin{array}{c}
            0 \\
            \phi_{1}\\
            \phi_{2}
          \end{array}
  \right) (x_2,x_3)\, 
  \one{\{0< x_3<1\}}
  + e_2\, \one{\{x_3>1\text{ or } x_3<0\}}\,.
\end{align}
Choosing $\phi$ of Lemma~\ref{lem:phi-wires} for $U = V_R$, we can
exploit the properties of $\phi$ on $\{z_2 = 1\}$ and $\{z_2 =
0\}$. They imply that tangential components of $\Phi_\eta$ have no
jump across $x_3 = 0$ or $x_3 = 1$.  In particular, the curl is given
by
\begin{align}
  \label{eq:curl-Phi=div-phi}
  &\curl \Phi_\eta(x_1,x_2,x_3)
    =  \left(\nabla^\perp \cdot \phi\right)(x_2,x_3)\, e_1\, \one{\{0< x_3<1\}}\,.
\end{align}

\begin{proposition}[Cell function $\Phi_\eta$ and disconnectedness of
  wires in direction $e_2$]
  \label{prop:construction:Phi-wires-ortho}
  Let $z_0 \in V = (0,1)^2$ be a point, $R>0$ be a radius such that
  $\overline{B_R(z_0)} \subset V$. For a sequence of radii
  $0 < r_\eta \leq R$ and a sequence of open sets
  $I_\eta \subset (0,1)$, we consider $T_{r_\eta, I_\eta}$ of
  \eqref{def:T_I,r}.  The inclusion
  $\Sigma_Y^\eta = T_{r_\eta, I_\eta}$ is always asymptotically
  disconnected in direction $e_2$, i.e.~there exists a sequence
  $\Phi_\eta \in \Hd(\curl, Z)$ satisfying \eqref{eq:CellProblem:Phi}
  for $j = 2$. In fact, one can choose $\Phi_\eta= \Phi$ independent
  of $\eta$.
\end{proposition}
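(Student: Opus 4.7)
The plan is to take the promised construction $\Phi_\eta = \Phi$ defined by \eqref{eq:Con:Phi-phi}, where the two-dimensional vector field $\phi$ comes from Lemma~\ref{lem:phi-wires} applied to $U = V_R = V \setminus \overline{B_R(z_0)}$. Since $R$ is fixed and $V_R$ is a connected Lipschitz domain whose boundary contains $(0,1)\times \{0,1\}$, the lemma applies and yields an $e_1$-periodic $\phi \in L^2(V,\C^2)$ with $\nabla^\perp\cdot \phi = 0$ in $V$, with the boundary trace $\phi\cdot e_1 = 1$ on $\{z_2 = 0\}$ and $\{z_2 = 1\}$, and with $\phi = 0$ on $V\setminus U = \overline{B_R(z_0)}$. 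Then $\Phi$ is defined by extending $\phi$ to a three-dimensional field that is constant in $x_1$ on the slab $\{0 < x_3 < 1\}$ and equal to $e_2$ above and below.

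The verification of \eqref{eq:CellProblem:Phi} then proceeds by checking three items in turn. First, periodicity: $\Phi$ is trivially $e_1$-periodic since it does not depend on $x_1$, and it is $e_2$-periodic because this periodicity is inherited from the $e_1$-periodicity of the two-dimensional function $\phi$ (recall that the 2D first coordinate $z_1$ plays the role of the 3D coordinate $x_2$). Second, the obstacle condition \eqref{eq:CellProblem:Phi:obstacle}: the set $T_{r_\eta,I_\eta}$ is contained in $(0,1)\times B_{r_\eta}(z_0) \subset (0,1)\times \overline{B_R(z_0)}$, which in the $(x_2,x_3)$-section corresponds to $V\setminus U$; thus \eqref{eq:phi:innen} forces $\Phi|_{T_{r_\eta,I_\eta}} = 0$.

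Third, and the only point requiring a small computation, one checks that $\Phi$ has $\curl \Phi \in L^2(Z)$ in the distributional sense and that both norms in \eqref{eq:CellProblem:Phi:L2} and \eqref{eq:CellProblem:Phi:curl} are finite. The boundary traces \eqref{eq:boundary-cond-phi:oben-unten} give $\phi_1(x_2, 0) = \phi_1(x_2, 1) = 1$, which means that the tangential components $(\Phi_1,\Phi_2)$ match across the horizontal interfaces $\{x_3 = 0\}$ and $\{x_3 = 1\}$ (the inside value $\Phi_2 = \phi_1$ equals the outside value $1$, and $\Phi_1 = 0$ on both sides). Hence there are no distributional surface contributions to $\curl \Phi$, so that on each slab the curl is just the one computed in \eqref{eq:curl-Phi=div-phi}, and this is identically zero by \eqref{eq:phi:curl}. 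Consequently $\eta^{-1/2}\|\curl \Phi\|_{L^2(Z)} = 0$ for all $\eta$. For the first condition, note that $\Phi - e_2$ is supported in $(0,1)^2 \times (0,1)$, which has finite measure, and $\phi \in L^2(V)$ gives a uniform bound $\|\Phi - e_2\|_{L^2(Z)} \leq C$; multiplying by $\eta^{1/2}$ yields convergence to $0$.

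The main obstacle (already resolved in Lemma~\ref{lem:phi-wires}) is the existence of a divergence-free two-dimensional field $\phi$ that vanishes on a ball and nevertheless realizes the prescribed unit tangential flux through the horizontal boundaries; once that $\phi$ is in hand, the proof here is an almost mechanical lifting to three dimensions, requiring only attention to the tangential matching at $\{x_3=0\}$ and $\{x_3=1\}$ so that no spurious curl appears.
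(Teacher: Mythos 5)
Your proposal is correct and follows essentially the same route as the paper: take the $\eta$-independent $\phi$ from Lemma~\ref{lem:phi-wires} with $U=V_R$, lift it to $\Phi$ via \eqref{eq:Con:Phi-phi}, use \eqref{eq:boundary-cond-phi:oben-unten} to match tangential traces at $\{x_3=0\}$ and $\{x_3=1\}$ so that $\curl\Phi=0$ by \eqref{eq:phi:curl}, and verify \eqref{eq:CellProblem:Phi:L2} and \eqref{eq:CellProblem:Phi:obstacle} exactly as the paper does. Your slightly more explicit discussion of why no distributional surface terms appear in $\curl\Phi$ is a welcome clarification of a point the paper delegates to the remarks preceding the proposition, but it is not a different argument.
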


\begin{proof}
  We use $\phi$ of Lemma~\ref{lem:phi-wires} for $U = V_R$ and
  $\Phi_\eta = \Phi$ by \eqref{eq:Con:Phi-phi}. Since $\phi$ is
  periodic in the first argument, $\Phi$ is $e_2$-periodic (since
  $\Phi$ is independent of $x_1$ it is also $e_1$-periodic). The curl
  of $\Phi$ is essentially given by $\nabla^\perp \cdot \phi$, see
  \eqref{eq:curl-Phi=div-phi}. Using $\nabla^\perp \cdot \phi = 0$ of
  \eqref{eq:phi:curl}, we obtain $\curl \Phi = 0$. This implies
  \eqref{eq:CellProblem:Phi:curl}.

  The estimate \eqref{eq:CellProblem:Phi:L2} compares the cell
  function $\Phi$ with the constant $e_2$ on the set $Z$. Since $\Phi$
  is identical to $e_2$ for $x_3 \not \in (0,1)$, the only
  contributions to the $L^2(Z)$-norm come from $Y$, where the
  $L^2$-norm of $\Phi$ is finite, hence
  \begin{align*}
    \eta^{1/2}\|\Phi - e_2\|_{L^2(Z)}
    &= \eta^{1/2} \| \Phi - e_2 \|_{L^2(Y)}
      \to 0 \,.
  \end{align*}
  This shows that $\Phi_\eta = \Phi$ satisfies
  \eqref{eq:CellProblem:Phi:L2}.
  Moreover, since $\phi$ vanishes on $B_R(z_0) = V \setminus U$, see
  \eqref{eq:phi:innen}, we have $\Phi|_{T_{R, \emptyset}} =0$ and,
  thus, in particular, $\Phi|_{T_{r_\eta, I_\eta}}=0$, which verifies
  \eqref{eq:CellProblem:Phi:obstacle}.
\end{proof}

Proposition \eqref{prop:construction:Phi-wires-ortho} and its proof
hold also for more general geometries:

\begin{remark}[Disconnectedness of general cylinders in
  direction~$e_2$]\label{rem:compact-inclusions:2D}
  Let all sets $\Sigma_Y^\eta$ be contained in a set
  $\Sigma_Y^0= (0,1) \times B$ for a closed set
  $B \subset V = (0,1)^2$ that is independent of $\eta$. Then,
  $\Sigma^\eta_Y$ is asymptotically disconnected in directions $e_2$.
\end{remark}

\begin{proof}
  We note that $V \setminus B$ has one connected component
  $U\subset V\setminus B$ for which $\partial U \supset \partial V$
  holds.  Arguing as in the proof of
  Proposition~\ref{prop:construction:Phi-wires-ortho} using the set
  $U$ instead of $V_R$, we obtain that
  $(0,1) \times (V \setminus \overline{U})$ is asymptotically
  disconnected in the direction $e_2$.  For $\Sigma_Y^0$ and
  $\Sigma_Y^\eta$, the asymptotically disconnectedness in direction
  $e_2$ follows from the monotonicity result of
  Proposition~\ref{prop:GeometricMonotonicity}.
\end{proof}

\begin{proof}[Proof of Remark \ref{rem:compact-inclusions}]
  For the closed set $\Sigma_Y^0\subset Y = (0,1)^3$ one can find a
  closed set $B_1\in (0,1)^2$ such that
  $\Sigma_Y^0 \subset (0,1) \times B_1$.  Choosing the larger set
  $(0,1) \times B_1$ as new $\Sigma_Y^0$,
  Remark~\ref{rem:compact-inclusions:2D} shows the asymptotic
  disconnectedness in direction $e_2$.  Moreover, there exists a set
  $B_2$ such that
  $\Sigma_Y^0 \subset \{y \in Y \mid (y_1,y_3) \in B_2 \}$. By
  changing the role of $x_1$ and $x_2$, we obtain the asymptotic
  disconnectedness in direction $e_1$.
\end{proof}

\section{Criticality of the radius asymptotics}
\label{sec:criticality}
In this section we show that, when the radii are substantially smaller
than in Proposition \ref{prop:construction:Psi-wires-perfect}, the
cylindrical obstacles are disconnected even in direction $e_1$. This is
true even if there are no gaps.

\begin{lemma}[2D scalar field $\phi_r$]\label{lem:phi-wires-critical}
  Let $z_0 \in V = (0,1)^2$ be a point, $R>0$ be a radius such that
  $\overline{B_R(z_0)} \subset V$. We consider $0<r\leq R^2$ and
  $V_r = V \setminus B_r(z_0)$. There exists $\phi_{r} \in H^1(V,\R)$
  and a constant $C_4$, which does not depend on $r$, such that
  \begin{align}\label{eq:boundary-cond-phi-r-M-critical}
    \phi_{r}|_{B_r(z_0)}
    &\equiv 0\, , \qquad
      \phi_{r}|_{V \setminus B_R(z_0)} \equiv 1 \,,
    \\
    \label{eq:est:phi_r-critical}
    \phi_r(x) & \in [0,1] \qquad \textrm{for a.e. } x \in V\,,
    \\\label{eq:est:phi_r:curl-critical}\
    \| \nabla \phi_{r} \|^2_{L^2(V)}
    &\leq C_4 |\ln(r)|^{-1} \,.
  \end{align}
\end{lemma}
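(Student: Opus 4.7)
The natural candidate is the classical logarithmic capacity profile. I would define
\begin{equation*}
  \phi_r(x) \coloneqq \begin{cases}
    0 & \text{if } x \in B_r(z_0)\,,\\[0.5mm]
    \dfrac{\ln(|x-z_0|/r)}{\ln(R/r)} & \text{if } x \in B_R(z_0)\setminus B_r(z_0)\,,\\[2mm]
    1 & \text{if } x \in V\setminus B_R(z_0)\,.
  \end{cases}
\end{equation*}
The first step is to check that $\phi_r \in H^1(V)$: the three pieces agree in the trace sense on $\partial B_r(z_0)$ and on $\partial B_R(z_0)$ (both traces equal $0$ and $1$, respectively), so $\phi_r$ is continuous on $V$, and on each of the three open regions it is smooth. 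This already yields \eqref{eq:boundary-cond-phi-r-M-critical} directly from the definition, and \eqref{eq:est:phi_r-critical} follows from the monotonicity of $s\mapsto \ln(s/r)/\ln(R/r)$ on $[r,R]$.

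The second step is the gradient estimate. On the annulus $B_R(z_0)\setminus B_r(z_0)$ one computes
\begin{equation*}
  \nabla \phi_r(x) = \frac{1}{\ln(R/r)}\,\frac{x-z_0}{|x-z_0|^2}\,,
  \qquad |\nabla \phi_r(x)|^2 = \frac{1}{\ln^2(R/r)}\,\frac{1}{|x-z_0|^2}\,.
\end{equation*}
Since the gradient vanishes on $B_r(z_0)$ and on $V\setminus B_R(z_0)$, polar coordinates around $z_0$ give
\begin{equation*}
  \|\nabla \phi_r\|_{L^2(V)}^2
  = \frac{1}{\ln^2(R/r)}\int_r^R \frac{2\pi s}{s^2}\dd s
  = \frac{2\pi}{\ln(R/r)}\,.
\end{equation*}

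The third step is to convert $\ln(R/r)^{-1}$ into $|\ln(r)|^{-1}$ using the hypothesis $r\le R^2$. Because $\overline{B_R(z_0)} \subset V = (0,1)^2$ we have $R<1/2<1$, so $|\ln R|>0$ and $|\ln r| \ge 2|\ln R|$. Hence
\begin{equation*}
  \ln(R/r) = |\ln r| - |\ln R| \ge |\ln r| - \tfrac{1}{2}|\ln r| = \tfrac{1}{2}|\ln r|\,,
\end{equation*}
and one obtains \eqref{eq:est:phi_r:curl-critical} with $C_4 = 4\pi$.

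There is no real obstacle: this is the standard $2$D logarithmic capacity construction. The only point that needs care is that the hypothesis $r\le R^2$ (rather than just $r<R$) is used precisely to absorb the constant $|\ln R|$ into $|\ln r|$, so that the estimate is uniform in $r$ for $r\to 0$ with $R$ fixed.
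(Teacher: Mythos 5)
Your proof is correct and follows essentially the same route as the paper: both use the logarithmic capacity profile on the annulus $B_R(z_0)\setminus B_r(z_0)$, extended by $0$ inside and by $1$ outside, and both compute $\|\nabla\phi_r\|_{L^2(V)}^2 = 2\pi/\ln(R/r)$ by the polar-coordinate calculation. The only slight difference is cosmetic: the paper introduces $\phi_r$ as the minimizer of the Dirichlet energy with the prescribed boundary values and then identifies it with the logarithm, whereas you write the formula directly. You are, if anything, a bit more careful than the paper in one place: you spell out how the hypothesis $r\le R^2$ is used to absorb $|\ln R|$ into $|\ln r|$, giving the explicit constant $C_4=4\pi$, while the paper simply states that the identity $\|\nabla\phi_r\|_{L^2}^2 = 2\pi(\ln R-\ln r)^{-1}$ ``provides'' the claimed bound without carrying out that last step.
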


\begin{proof}
  We define $\phi_r$ on $B_R(z_0) \setminus B_r(z_0)$ as the minimizer of
  the Dirichlet energy
  $\frac{1}{2}\|\nabla \phi_r\|_{L^2(B_R(z_0) \setminus B_r(z_0))}^2$
  subject to the boundary values $\phi_r = 0$ on $\partial B_r(z_0)$ and
  $\phi_r = 1$ on $\partial B_R(z_0)$.  This minimizer is given by a
  logarithm in the radius, more precisely, with
  $|x| = (x_1^2 +x_2^2)^{1/2}$:
  \begin{equation*}
    \phi_r(x) = \widetilde{\phi}_r(|x-z_0|) \qquad \text{for } \quad 
    \widetilde{\phi}_r(\tau) = \frac{\ln(\tau) - \ln(r)}{\ln(R) - \ln(r)}.
  \end{equation*}
  We extend $\phi_r$ by $0$ in $B_r(z_0)$ and by $1$ in
  $V \setminus B_R(z_0)$ such that
  \eqref{eq:boundary-cond-phi-r-M-critical} and \eqref
  {eq:est:phi_r-critical} are satisfied.  Elementary calculus yields
  \begin{align*}
    \|\nabla \phi_{r} \|^2_{L^2(V)}
    &= \int_{B_R(z_0) \setminus B_r(z_0)} |\nabla \phi_r|^2
    = 2 \pi \int_r^R |\del_\tau \widetilde{\phi}_r|^2\, \tau\, d\tau\\
    &=2 \pi(\ln(R) - \ln(r))^{-2}\int_r^R \frac1{\tau}\, d\tau
      =2 \pi(\ln(R) - \ln(r))^{-1}\,.
  \end{align*}
  This provides \eqref{eq:est:phi_r:curl-critical}.
\end{proof}

As in Section \ref {sec:wires}, we construct a sequence of $3$D cell
functions from the $2$D counterpart. Here, for a sequence of radii
$r_\eta$ we use the $2$D scalar field $\phi_{r_\eta}$ from
Lemma~\ref{lem:phi-wires-critical} to construct a field $\Phi_\eta$
that is aligned with $e_1$:
\begin{align}
  \label{eq:Con:Phi-phi-critical}
  &\Phi_\eta(x_1,x_2,x_3) \coloneqq e_1 \, \phi_{r_\eta}(x_2,x_3)\, \one{\{0< x_3<1\}}
    + e_1\, \one{\{x_3>1\text{ or } x_3<0\}} \,.
\end{align}
The properties of $\phi_{r_\eta}$ on $\{z_2 = 1\}$ and
$\{z_2 = 0\}$ imply that the tangential components of the $3$D
function $\Phi_\eta$ have no jumps.  The $\curl$ is therefore given by
\begin{align}
  \label{eq:curl-Phi=div-phi-critical}
  &\curl \Phi_\eta(x_1,x_2,x_3)
    = 
    \left(\begin{array}{c}
            0 \\
            \del_2 \phi_{r_\eta}\\
            -\del_1 \phi_{r_\eta} 
          \end{array}
  \right) (x_2,x_3)\ \one{\{0< x_3<1\}}\,.
\end{align}

\begin{proposition}[Cell function $\Phi_\eta$ and disconnectedness in
  direction $e_1$ for too thin wires]
  \label{prop:construction:Phi-wires-thin}
  Let $z_0 \in V = (0,1)^2$ be a point and $R>0$ with
  $\overline{B_R(z_0)} \subset V$. For a sequence of radii $0 < r_\eta \leq R^2$,
  we consider $T_{r_\eta}$ defined in \eqref{def:T_r}. Let the
  inclusions be wires $\Sigma_Y^\eta = T_{r_\eta}$ with exponentially
  small radius in the sense that
  \begin{equation}
    \label{eq:critical-8232}
    \eta |\ln(r_\eta)| \to \infty \, .
  \end{equation} 
  Then, $\Sigma^\eta_Y$ is asymptotically disconnected in direction
  $e_1$ in the sense of Definition~\ref
  {def:Analytical-DisConnectedness}: There exists a sequence
  $\Phi_\eta \in \Hd(\curl, Z)$ satisfying \eqref{eq:CellProblem:Phi}
  with $j = 1$.
\end{proposition}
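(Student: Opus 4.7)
The plan is to verify the three conditions of Definition~\ref{def:Analytical-DisConnectedness} directly for the field $\Phi_\eta$ defined in \eqref{eq:Con:Phi-phi-critical}, using the properties of the two-dimensional scalar field $\phi_{r_\eta}$ furnished by Lemma~\ref{lem:phi-wires-critical}. Before the three estimates, I would observe that $\Phi_\eta \in \Hd(\curl, Z)$: the function $\phi_{r_\eta}$ equals $1$ on $V\setminus B_R(z_0)$, so in particular on $\{x_2\in (0,1),\, x_3\in\{0,1\}\}$, which guarantees that the tangential component of $\Phi_\eta$ has no jump across $\{x_3=0\}$ or $\{x_3=1\}$; together with the $e_1$- and $e_2$-periodicity of the trivial extension of $\phi_{r_\eta}$, this gives the claimed regularity and the curl formula \eqref{eq:curl-Phi=div-phi-critical}.

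For \eqref{eq:CellProblem:Phi:obstacle}, the vanishing of $\Phi_\eta$ on $\Sigma_Y^\eta = T_{r_\eta}$ is immediate from $\phi_{r_\eta}|_{B_{r_\eta}(z_0)}\equiv 0$ in \eqref{eq:boundary-cond-phi-r-M-critical}. For \eqref{eq:CellProblem:Phi:L2}, I would use that $\Phi_\eta = e_1$ outside the slab $\{0<x_3<1\}$ and $|\Phi_\eta - e_1|\le 1$ pointwise by \eqref{eq:est:phi_r-critical}, so that
\begin{equation*}
  \eta^{1/2}\|\Phi_\eta - e_1\|_{L^2(Z)}
  \le \eta^{1/2} |Y|^{1/2} \to 0.
\end{equation*}

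The decisive step is \eqref{eq:CellProblem:Phi:curl}, where the smallness of the radius enters. Using \eqref{eq:curl-Phi=div-phi-critical} and \eqref{eq:est:phi_r:curl-critical}, I would estimate
\begin{equation*}
  \|\curl \Phi_\eta\|_{L^2(Z)}^2
  = \|\nabla \phi_{r_\eta}\|_{L^2(V)}^2
  \le C_4 |\ln(r_\eta)|^{-1},
\end{equation*}
so that
\begin{equation*}
  \eta^{-1/2}\|\curl \Phi_\eta\|_{L^2(Z)}
  \le C_4^{1/2} \bigl(\eta |\ln(r_\eta)|\bigr)^{-1/2} \to 0
\end{equation*}
by the criticality assumption \eqref{eq:critical-8232}.

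There is no genuine obstacle here, but the main conceptual point worth emphasising is the scaling balance: the logarithmic blow-up $|\ln(r_\eta)|^{-1}$ of the $H^1$-energy of the two-dimensional capacity-type potential is exactly what must beat the factor $\eta^{-1}$ coming from the rescaling in Lemma~\ref{lem:Extension-to-Omega}. The hypothesis $\eta|\ln(r_\eta)|\to\infty$ is precisely the threshold at which this balance tips in favour of disconnectedness, mirroring and complementing the opposite threshold $\eta|\ln(r_\eta)|\to 0$ used in Proposition~\ref{prop:construction:Psi-wires-perfect} for connectedness.
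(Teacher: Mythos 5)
Your proof is correct and follows essentially the same route as the paper's: define $\Phi_\eta$ via \eqref{eq:Con:Phi-phi-critical} from the two-dimensional capacity-type potential $\phi_{r_\eta}$ of Lemma~\ref{lem:phi-wires-critical}, then verify \eqref{eq:CellProblem:Phi:obstacle}, \eqref{eq:CellProblem:Phi:L2} and \eqref{eq:CellProblem:Phi:curl} with the same bounds, using $\phi_{r_\eta}\in[0,1]$ for the $L^2$ estimate and $\|\nabla\phi_{r_\eta}\|_{L^2(V)}^2\le C_4|\ln r_\eta|^{-1}$ against the hypothesis $\eta|\ln r_\eta|\to\infty$ for the curl estimate. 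Your added remark on the membership $\Phi_\eta\in\Hd(\curl,Z)$ and the scaling-balance discussion are sensible commentary but not a different argument.
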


\begin{proof}
  We consider $\phi_{r_\eta} \in H^1(V)$ of Lemma \ref{lem:phi-wires}
  and use $\Phi_\eta$ of \eqref {eq:Con:Phi-phi-critical}.  The
  function $\phi_{r_\eta}$ is periodic in its first argument,
  therefore, $\Phi_\eta$ is $e_2$-periodic (it is independent of
  $x_1$, hence also $e_1$-periodic).  The curl of $\Phi_\eta$ is given
  by \eqref{eq:curl-Phi=div-phi-critical} and we have
  $\Phi_\eta \in \Hd(\curl, Z)$.  It remains to show
  \eqref{eq:CellProblem:Phi}.
  
  By construction, $\phi_{r_\eta}|_{B_{r_\eta}(z_0)} = 0$. This yields that
  $\Phi_\eta$ vanishes in $T_{r_\eta}$, thus,
  \eqref{eq:CellProblem:Phi:obstacle} holds.  Regarding
  \eqref{eq:CellProblem:Phi:L2}, we calculate
  \begin{align*}
    \eta^{1/2} \|\Phi_\eta - e_1\|_{L^2(Z)}
    &= \eta^{1/2} \|e_1 (\phi_{r_\eta} - 1)\|_{L^2(V)}
      \leq \eta^{1/2} \to 0 \,,
  \end{align*}
  where we used $\phi_{r_\eta}(x) \in [0,1]$, which was observed in
  \eqref{eq:est:phi_r-critical}.  We have thus obtained
  \eqref{eq:CellProblem:Phi:L2}.
  
  In order to show \eqref{eq:CellProblem:Phi:curl}, we use the curl of
  $\Phi_\eta$ that was calculated in
  \eqref{eq:curl-Phi=div-phi-critical}.  The estimate
  \eqref{eq:est:phi_r:curl-critical} on $\nabla \phi_{r_\eta}$ allows to
  calculate
  \begin{align*}
    \eta^{-1}\|\curl \Phi_\eta \|_{L^2(Z)}^2
    &= \eta^{-1} \|\nabla^\perp \phi_{r_\eta} \|_{L^2(V)}^2
      \leq \eta^{-1} C_4 |\ln(r_\eta)|^{-1}\to 0\,,
  \end{align*}
  where we used \eqref {eq:critical-8232} in the convergence.  This
  verifies \eqref{eq:CellProblem:Phi:curl}.
\end{proof}

\def\cprime{$'$}

\end{document}